\newcommand{\C}{{\mathbb C}}
\newcommand{\D}{{\mathbb D}}
\newcommand{\N}{{\mathbb N}}
\newcommand{\R}{{\mathbb R}}
\newcommand{\Z}{{\mathbb Z}}
\renewcommand{\S}{{\mathbb S}}
\renewcommand{\P}{{\mathbb P}}
\newcommand\cB{{\mathcal B}}
\newtheorem{theorem}{Theorem}[section]
\newtheorem{thm}[theorem]{Theorem}
\newtheorem{prop}[theorem]{Proposition}
\newtheorem{corollary}[theorem]{Corollary}
\newtheorem{lemma}[theorem]{Lemma}
\newtheorem{remark}[theorem]{Remark}
\newenvironment{proof}{\noindent {\bf Proof.}}{ \hfill $\Box$\\ }
\newcommand\eps{\varepsilon}
\newcommand{\tr}{\operatorname {Tr}}
\title{On multidimensional infinite dihedral group extensions of Gibbs Markov maps}
\date{\today}
\author{Jaime Gomez
	\thanks{Mathematisch Instituut,
		University of Leiden,
		Einsteinweg 55,
		2333 CC Leiden, The Netherlands;
		{\it email address: jaagomezortiz@gmail.com}}
	\and Dalia Terhesiu
	\thanks{Mathematisch Instituut,
		University of Leiden,
		Einsteinweg 55,
		2333 CC Leiden, The Netherlands;
		{\it email address: daliaterhesiu@gmail.com}}
}
\begin{document}

\maketitle

\begin{abstract}
 We obtain a local central limit theorem for cocycles associated with
  a class of non abelian and non compact group extensions of Gibbs Markov maps. This class consists of 
 multidimensional infinite dihedral groups.  Unlike in the set up of the random walks on groups, we cannot use the convolution of measures on the group and instead we resort to an approach based on irreducible representations. Depending on the dimension of the group, we obtain either mixing, and thus ergodicity, or dissipativity. Also, we obtain the asymptotics of the first return time of the group extension to the origin. 
\end{abstract}

\section{Introduction and summary of the main results}

We consider a class of non abelian and non compact groups, which we refer to as multidimensional infinite dihedral groups. Most probably, this is the easiest class of non abelian and non compact groups for which one can hope to obtain explicit limit theorems. However, obtaining a local central limit theorem for cocycles associated with this class of groups is highly non trivial, even in the set up of (group extensions of) Gibbs Markov maps.
The approach we develop in this work via irreducible representations can be applied to other classes of groups, but each class requires modifications in the form of the irreducible representations. This is the reason why in this work we focus on multidimensional infinite dihedral groups.

The infinite dihedral group is 
$D_\infty=\langle R,S: S^2=e,SRS=R^{-1}\rangle$, where $R$ stands for rotation and $S$ for reflection. The elements of $D_\infty$
are of the form $R^m$ or $S R^m$, $m\in\Z$.
The infinite dihedral group and its multidimensional version can also be
described in terms of semidirect products. That is, 
$D_\infty = \Z/2\Z\ltimes_\phi \Z$ and  its multidimensional version, which we denote by $G_d$, is $G_d= \Z/2\Z\ltimes_\phi \Z^d$. Here $\phi: \Z/2\Z\to \mbox{Aut}(\Z^d)$ is given by $\phi(1)(v)=v$ and $\phi(-1)(v)=-v$ for each $v\in\Z^d$.
For more details on this type of groups, along with details on the irreducible representations which are essential for our approach, we refer to Section~\ref{subs:REVD}. 
In this paper, we restrict to $G_d= \Z/2\Z\ltimes_\phi \Z^d$ but given that the irreducible representations are similar for $\Z/2\Z\ltimes_\phi \mathbb{R}^d$ (see, for instance,~\cite[Example 11]{Palmer} for the case $d=1$), we believe that the treatment is analogous. In Section~\ref{subs:REVD}, we recall/obtain the required description of the dual space of $G_d$ along with the required inversion formula and explain why changing the
group $\Z/2\Z$ by another finite group (relevant when $d\ge 2$) requires a different treatment. This comes down to a different inversion formula. This is possible but every change
of the
group $\Z/2\Z$ requires new arguments. For details on this comment we refer to the paragraphs around Lemma~\ref{lemma: Plancherel-measure} in Section~\ref{subs:REVD}.

 Local central limit theorems (LCLT) for random walks (RW) on $D_\infty$ or $G_d= \Z/2\Z\ltimes_\phi \Z^d$
 under a second moment assumption (along with the aperiodicity) can be extracted from, for instance, RW on  polynomial growth
 discrete groups treated in~\cite{Alexop} (under a finite support assumption for the measure on the group). 
LCLT for RW on $D_\infty$ has also been obtained in~\cite{Tan},
 while looking at the class of affine Weyl groups.
 The LCLT for $D_\infty$ holds in~\cite{Tan} under certain assumptions: the measure $\nu$ has finite support with $\nu(e)>0$ where $e=(1,0)$ is the identity, and it is assumed that $\nu$ is symmetric (in the sense that $\nu(g)=\nu(g^{-1})$ for $g\in D_\infty$). It seems very likely that no symmetry assumption is necessary.
 Recurrence for $D_\infty$ with no symmetry, but a first moment assumption, has been obtained for instance in~\cite[Corollary 6]{Woe}.
 It seems very plausible that LCLT for RW on $G_d$  can be derived from other works, but we could not identify precise explicit results.
 

 We are interested in a mixing local limit theorem for the infinite dihedral group  $D_\infty$ 
(along with $G_d$, $d\ge 1$) extension of Gibbs Markov maps.
In this introductory section we present the main results in terms of $D_\infty$ extensions of Gibbs Markov maps and point to the generalizations for $G_d$, as obtained in this work.

Let $(X,T,\alpha,\mu)$ be a Gibbs Markov map (see Section~\ref{subsec:GM} for a quick overview).  
Let $T_\psi:X\times D_\infty\to X\times D_\infty$ be the group extension of $T$ by the cocycle $\psi:X\to D_\infty$ defined via
\begin{align*}
 T_\psi(x, g)=(T x, \psi(x) g).
\end{align*}
This map  is invariant under the measure $\hat\mu=\mu\otimes m_{D_\infty}$, where  $m_{D_\infty}$ denotes the Haar measure on  $D_\infty$. Since $D_\infty$ is non compact, $\hat\mu (X\times D_\infty)=\infty$.

For $n\in\N$ and $x\in X$,  define
\begin{align}\label{eq:defpsin}
    \psi_n(x)=\psi(T^{n-1}(x))\cdot\psi(T^{n-2}(x))\cdots\psi(T(x))\cdot\psi(x),
\end{align}
where $\cdot$ stands for the law of the group.
In this paper we are interested in a generalization of LCLT for  the RW on $D_\infty$   to the behaviour of the sequence $\psi_n$.  For this reason, we will reprove the classical result for RW on $D_\infty$ via irreducible representations.
The reason for this is that \emph{we will transport
part of these calculations to obtain a LCLT for $\psi_n$}.


Our approach proceeds in two steps.  First we revisit the LCLT
for a general RW on the infinite dihedral group $D_\infty$ (and its multidimensional version $G_d$). Then we generalize this approach (via twisted transfer operators) to understand the behaviour of the sequence $\psi_n$.

As already mentioned, LCLT for such RW can be extracted from previous works, for instance,~\cite{Alexop}.  However,
these proofs exploit the convolution structure and do not
directly adapt to deterministic group extensions of dynamical systems.
For our purposes we therefore require a different point of view.  We prove
the LCLT for the RW on $D_\infty$ (and $G_d$), formulated in Proposition~\ref{proof:proprw}, by analyzing the  irreducible representations of
$D_\infty$ (and $G_d$) (as recalled in Section~\ref{subs:REVD}). The Fourier
transform, described via irreducible representations,  is a  $2\times 2$ matrix with complex entries, and the LCLT
follows from a detailed study of the eigenvalues of the corresponding Fourier transform described as a matrix. The inversion formula is captured in terms of the trace operator. This representation theoretic formulation has a crucial
advantage: by and large, it can be transported to the transfer operator of a
Gibbs Markov map twisted by a $D_\infty$-valued cocycle.  The spectral
perturbation analysis carried out for the random walk thus serves as a model
for the corresponding analysis of the twisted transfer operators, and this is
what ultimately yields the (mixing) local limit theorem for the cocycle $\psi$, as formulated in Theorem~\ref{prop:LCLTGM}. In short, for $D_\infty$, this result says that for $v, w:X\to\C^2$, $v, w$ 'sufficiently regular',
\begin{align*}
 \lim_{n\to\infty}\sqrt n\int_X 1_{\{\psi_n(x)=(1,0)\}} \langle v, w\circ T^n\rangle\, d\mu=\Phi_1(0)\left\langle\int_X v\, d\mu\, , \int_X w\, d\mu\right\rangle,
\end{align*}
where $\Phi_1$ is a suitable Gaussian density.
The LCLT in Theorem~\ref{prop:LCLTGM} is a bit more general phrased in terms of $\psi_n(x)=(\pm 1,r)$, but here we resume to the neutral element
$e=(1,0)$ as to state mixing for $T_\psi$. More importantly, Theorem~\ref{prop:LCLTGM} deals with the whole class of $G_d$ extensions of Gibbs Markov maps.

For $e=(1,0)$ let $1_{\{e\}}$ the indicator function consisting of the neutral element. A classical consequence of the LCLT is that for $\hat v= v\times 
1_{\{e\}}$, $\hat w= w\times 1_{\{e\}}$, with $v$ and $w$ as above,
\(
 \lim_{n\to\infty}\sqrt n\int_{X\times D_\infty} \langle\hat v, \hat w\circ T_\psi^n\rangle\, d\hat\mu\to \Phi_1(0)
 \left\langle\int_{X\times D_\infty} \hat v\, d\mu\,, \int_{X\times D_\infty} \hat w\, d\mu\right\rangle.
\)
Since $T_\psi$ is conservative and ergodic, this implies strong mixing, also known as Krickeberg mixing~\cite{Krickeberg67}. In this set up,
strong mixing means that given  $A,B$ finite union of partition elements  and $g,h\in D_\infty$,
\[
 \lim_{n\to\infty}\sqrt n\hat\mu\left((A\times\{g\})\cap T_\psi^{-n}(B\times\{h\})\right)\to \Phi_1(0)\hat\mu(A\times\{g\})\hat\mu(B\times\{h\}).
\]
For suitable classes of intermittent maps, strong mixing was initially treated in~\cite{Thaler00}  and settled in~\cite{Gouezel11, MT12}, while mixing with complete expansions for $\Z^d$ extensions of dynamical systems (in particular of Gibbs Markov maps) was obtained in~\cite{Pene18}.
For different notions of mixing in infinite measure (such as local-global and global-global) we refer to~\cite{Lenci10} (see also~\cite{DNL21,DN22} and references therein).

We remark that the LCLT for the RW on $D_\infty$ (or $G_d$), namely Proposition~\ref{prop:rw} holds under no symmetry assumption, no finite support of the measure and as required, second moment.
However, for the Gibbs Markov map we assume  a certain mild symmetry assumption (as stated in subsection~\ref{subsec:mres}), though we believe that with some work this assumption can be removed. 
In the sequel (see Theorem~\ref{prop:LCLTGM})  we treat the class of $G_d$ extensions of Gibbs Markov maps and distinguish the behaviour of the extension depending on the dimension $d$: $d\le2$ (mixing, hence ergodic) and $d\ge 3$ (dissipative), see Corollary~\ref{cor:1}. 

Using the main part of the ingredients used in the proof of Theorem~\ref{prop:LCLTGM}, for all $d\ge 1$, we obtain the asymptotics of the first return time to $X\times\{e\}$, where $e$ is the neutral element of $G_d$. The asymptotics varies with $d$ and it is captured in Theorem~\ref{cor:2}.

We recall that (rapid) mixing for compact non abelian group extensions of
hyperbolic systems was first obtained in~\cite{Dolgopyat02}. 
As in~\cite{Dolgopyat02}, perturbed transfer operators (with irreducible 
representations specific to the non abelian and non compact groups we treat)
will play a crucial role.

From a different perspective, related to understanding the pressure function of the (group) extension, several results have been obtained in
~\cite{Ma13, Ja15, DoSh21}. Finally we mention that the LCLT for the RW, namely Proposition~\ref{prop:rw}
on $G_d$ implies a ratio local limit theorem under no symmetry assumption; the case of $G_d$ is rather special, unlike the class of RW treated in~\cite{DoSh24}. Moreover, the LCLT covered in 
Theorem~\ref{prop:LCLTGM} implies the ratio local limit for $G_d$, under symmetry assumptions (though, we believe these can be removed),
enlarging the class treated in~\cite{GomezTerhesiu}.

The plan of the paper is as follows. In Section~\ref{subs:REVD}
we review the class of the groups $G_d$ along with the required background used in the sequel.  In Section~\ref{sec:dinfiid}
we state and prove the LCLT for RW on $G_d$, namely Proposition~\ref{prop:rw}. In Section~\ref{subsec:GM} we recall the basics of Gibbs Markov maps and in subsection~\ref{subsec:mres}
we state the main results: the mixing LCLT Theorem~\ref{prop:LCLTGM},
mixing and dissipativity in Corollary~\ref{cor:1}
and asymptotic of the first return time to the origin, Theorem~\ref{cor:2}. The remainder is allocated to the proofs, mostly to the proof of Theorem~\ref{prop:LCLTGM}.

\section{Overview of groups of the form $\Z/2\Z\ltimes_\phi \Z^d$}\label{subs:REVD}

For $d\geq 1$, we consider the group $G_d= \Z/2\Z\ltimes_\phi \Z^d$ defined as the semidirect product with respect to the group homomorphism $\phi: \Z/2\Z\to \mbox{Aut}(\Z^d)$, $\phi(-1)(v)=-v$ for $v\in \Z^d$, where the group $\Z/2\Z$ is viewed in  multiplicative notation, i.e., $\Z/2\Z=\{1,-1\}$.

The multiplication on $G_d$ is given by 
\begin{align*}
    (\varepsilon, m)(\varepsilon', r) = (\varepsilon \varepsilon', m+ (-1)^{\frac{1-\varepsilon}{2}}r), \mbox{ for }m,r\in \Z^d, \varepsilon, \varepsilon'\in \Z/2\Z.
\end{align*}

Throughout, for a group $G$ we denote by  $\widehat{G}$ the space of unitary irreducible representations of $G$. Also,  when writing $\theta\in [0,2\pi)^d$ we mean that $\theta$ is a vector considered $\bmod{2\pi}$ in each coordinate, that is, $\theta$ is in the
$d$-dimensional torus.

In this section, we present a precise description of the space $\widehat{G_d}$ by using the Mackey machine, the details can be found in \cite{BaHa19} and \cite[Section 6]{Fol}.
Let $N_d =\{1\}\times \Z^d$.
The irreducible representations of $N_d\cong\Z^d$ are given by $\nu_{{\theta}}(1,r)=e^{i\langle{\theta},{r}\rangle}$, with $\theta$ a vector in $[0,2\pi)^d$ and ${r}\in\Z^d$.
There exists a natural action of $G_d$ on $\widehat{N_d}$ given by
\begin{align}\label{eq:act}
    (g\cdot \nu_\theta)(1,r) = \nu_\theta(g^{-1}(1,r) g), \mbox{ with } g\in G_d, r\in N_d, \nu_\theta\in \widehat{N_d}.
\end{align}
For every $\nu_\theta\in \widehat{N_d}$, the set $(G_d)_{\nu_\theta}$ denotes the stabilizer of $\nu_\theta$ by this action.
Given $\nu_\theta\in \widehat{N_d}$,  we let $H_{\nu_\theta}^d = (G_d)_{\nu_\theta} / N_d$. This follows the notation in  \cite[Section 6.6]{Fol}, except that we keep the $d$  dependence explicitly.
To describe the irreducible representations we first describe the orbit of $\nu_\theta \in \widehat{N_d}$ and the groups $H_{\nu_\theta}^d$.
For ${\theta}\in [0,2\pi)^d$ and ${r},{m}\in\Z^d$,
\begin{align*}
((-1,{r})\cdot\nu_{{\theta}})(1,{m})= \nu_{{\theta}}((-1,{r})(1,{m})(-1,{r}))=\nu_{\theta}(1,-{m})=e^{i\langle {\theta},-{m}\rangle}=\nu_{-\theta}(1,{m}).
\end{align*}
Write
\begin{align}\label{eq:td}
    T_d = \{ 0,\pi\}^d = \{\theta\in [0,2\pi)^d : \theta_i \in \{0,\pi\}, 1\leq i\leq d\}.
\end{align}
The previous expression implies that for each $[0,2\pi)^d\setminus T_d$, $\nu_{\theta}$ has two elements in its orbit and for the case $\theta\in T_d$, $\nu_{\theta}$ is invariant under the action mentioned in~\eqref{eq:act}.
Consequently we have the description of $H_{\nu_\theta}^d$:
\begin{align*}
    H_{\nu_{{\theta}}}^d\cong\begin{cases}
        \Z/2\Z,&\mbox{ if }{\theta}\in T_d,\\
        \{1\},&\mbox{ otherwise. }
    \end{cases}
\end{align*}
We are ready to describe the irreducible representations of $G_d$:\\

$\bullet$ If $\theta\in T_d$, we have $1$-dimensional representations. The group $\Z/2\Z$ has two irreducible representations given by $\rho_1(\gamma)=1$ and $\rho_{-1}(\gamma)=\gamma$, $\gamma\in \Z/2\Z$.
Thus, all the $1$-dimensional irreducible representations of $G_d$ are of the form $\rho_{\theta,\gamma}$, with  $\theta\in T_d\cup K$, $\gamma\in \Z/2\Z$,  defined by $\rho_{\theta,\gamma}(\varepsilon,m) = \nu_\theta(1,m)\rho_\gamma(\varepsilon)$, $m\in \Z^d, \varepsilon\in \Z/2\Z$.\\

$\bullet$ If $\theta\in[0,2\pi)^d\setminus T_d$, we have $2$-dimensional representations. 
The $2$-dimensional irreducible representations of $G_d$ are of the form $\rho_\theta$, where $\theta\in [0,2\pi)^d\setminus T_d$, defined by
\begin{align}\label{eq: 2-dim-rep}
    \rho_{{\theta}}(1,{m})=\begin{bmatrix}
        e^{i\langle {m},{\theta}\rangle}&0\\
        0&e^{-i\langle {m},{\theta}\rangle}
    \end{bmatrix}&&
    \rho_{{\theta}}(-1,{m})=\begin{bmatrix}
        0&e^{i\langle {m},{\theta}\rangle}\\
        e^{-i\langle {m},{\theta}\rangle}&0
    \end{bmatrix}.
\end{align}
In the $2$-dimensional case, $\rho_{{\theta}}$ and $\rho_{-{\theta}}$ are equivalent by \cite[Theorem 6.43]{Fol}. \\
The description used in \eqref{eq: 2-dim-rep} also provides representations for the elements in $T_d$.
These representations are not irreducible, and after changing the base, they can be written as direct sums of $1$-dimensional representations.
Indeed, note that
\begin{align}\label{eq: change-basis-rep}
U\rho_\theta(\varepsilon,m) U^{-1} = \frac{1}{2}\begin{pmatrix}
  e^{i\langle \theta, m\rangle}+e^{-i\langle \theta, m\rangle}& \varepsilon(e^{i\langle \theta, m\rangle}-e^{-i\langle \theta, m\rangle})\\
  e^{i\langle \theta, m\rangle}-e^{-i\langle \theta, m\rangle}&\varepsilon(e^{i\langle \theta, m\rangle}+e^{-i\langle \theta, m\rangle})
\end{pmatrix}, \mbox{ where }   U = \begin{pmatrix}
        \frac{1}{\sqrt{2}} & \frac{1}{\sqrt{2}}\\
        \frac{1}{\sqrt{2}} & -\frac{1}{\sqrt{2}}
    \end{pmatrix}.
\end{align}
In particular, if $\theta\in T_d$, so $e^{i \langle \theta, m \rangle} = \pm 1$, then
\begin{align*}
U\rho_\theta(\varepsilon,m) U^{-1} = \begin{pmatrix}
  e^{i\langle \theta, m\rangle}& 0\\
  0& \varepsilon e^{i\langle \theta, m\rangle}
\end{pmatrix} = \rho_{\theta,1}\oplus \rho_{\theta, -1}.
\end{align*}
Thus, for any $\theta\in T_d$, we can write the $2$ dimensional representation as
\begin{align}\label{eq:opi}
 \rho_\theta=\rho_{\theta,1}\oplus\rho_{\theta,-1},
\end{align}which is a vector in $\C^2$, with $\C^2$ a direct sum of Hilbert spaces (via the matrix $U$).
 This form as
in~\eqref{eq:opi} makes sense when applied to a vector
\(
    \Vec{x}=a \begin{pmatrix}
    \frac{1}{\sqrt{2}}\\\frac{1}{\sqrt{2}}
\end{pmatrix}+  b \begin{pmatrix}
    \frac{1}{\sqrt{2}}\\\frac{-1}{\sqrt{2}}
\end{pmatrix}
\), $a,b\in\C$. That is,
\begin{align*}
    (\rho_{\theta,1}\oplus\rho_{\theta,-1})(\eps,m)(\Vec{x})=a \begin{pmatrix}
    \frac{1}{\sqrt{2}}\\\frac{1}{\sqrt{2}}
\end{pmatrix}+ \varepsilon b \begin{pmatrix}
    \frac{1}{\sqrt{2}}\\\frac{-1}{\sqrt{2}}
\end{pmatrix}.
\end{align*}
The description of the representations given in \eqref{eq:opi} will be used in Sections \ref{s:inv}, \ref{section:spectral properties}, and~\ref{sec:red0}.

\medskip

The space $\widehat{G_d}$ is endowed with the Fell topology, which we briefly recall here using \cite[Proposition 1.C.6]{BaHa19}. 
Let $\pi_d\in \widehat{G_d}$ and $\xi$ a vector in the Hilbert space
$\mathcal{H}_{\pi_d}$ associated to $\pi_d$.
A function of positive type associated to $\pi_d$ is a map defined by
\begin{align*}
\varphi_{\pi_d,\xi}(g)=\langle \pi_d(g)\xi|\xi\rangle_{\mathcal{H}_{\pi_d}},\;\;g\in G_d.
\end{align*}

Let $\varphi$ be a normalized function of positive type associated to $\pi_d$.
A net $(\rho_i)_{i\in I}$ in $\widehat{G_d}$ converges, in the Fell topology, to an element $\pi_d\in \widehat{G_d}$ if there exists a net $(\psi_i)_{i\in I}$ of functions of positive type associated to $\rho_i$ such that $\lim_i\psi_i=\varphi$ pointwise.\\

In our case, for each $\theta\in T_d$ and $\gamma\in \Z/2\Z$, the functions of positive type are given by
\begin{align}\label{eq:positivefunctions-1dim}
    \varphi_{\theta,\gamma, \lambda}(\varepsilon, r) = \rho_{\theta,\gamma}(\varepsilon, r) = e^{i\langle r,\theta\rangle}\rho_\gamma(\varepsilon,0)|\lambda|^2,
\end{align}

and for each ${\theta}\in [0,2\pi)^d\setminus T_d$, 

\begin{align}\label{eq:positivefunctions}
    \varphi_{{\theta},s,t}(\varepsilon,{r})=\begin{cases}
e^{i\langle {r},{\theta}\rangle}|s|^2+e^{-i\langle {r},{\theta}\rangle}|t|^2,&\mbox{ if }\varepsilon=1\\
e^{i\langle {r},{\theta}\rangle}t\overline{s}+e^{-i\langle {r},{\theta}\rangle}s\overline{t},&\mbox{ if }\varepsilon=-1,
    \end{cases}
\end{align}
where $s,t,\in \mathbb{C}$ and $(\varepsilon,r)\in G_d$.

The following lemma will be used to obtain a description for the Plancherel measure on $\widehat{G_d}$  in Lemma \ref{lemma: Plancherel-measure} below.
This description provides the inversion formula \eqref{eq:invnu} and \eqref{eq:invD} below, which are the starting point for obtaining LCLT (for RWs and Gibbs Markov, respectively).
\begin{lemma}
    \label{Prop: induced-planch-meas}
The map 
\begin{align*}
    \Psi:[0,2\pi)^d\setminus T_d\to \widehat{G_d},\mbox{ given by }     \Psi({\theta})=\rho_{{\theta}},
\end{align*}
is continuous, where $[0,2\pi)^d\setminus T_d$ is endowed with the subspace topology inherited from $\mathbb{R}^d$ with the usual topology, and $\widehat{G_d}$ is endowed with the Fell topology.
\end{lemma}
\begin{proof}
Consider a sequence $({\alpha}_i)_{i\in\mathbb{N}}\subseteq  [0,2\pi)^d\setminus T_d$ which converges to ${\alpha}\in [0,2\pi)^d\setminus T_d$. 
We need to prove that $\Psi({\alpha}_i)$ converges to $\Psi({\alpha})$ as $i$ goes to infinity in the Fell topology.
This follows directly from the descriptions of the functions of positive type given in \eqref{eq:positivefunctions-1dim} and \eqref{eq:positivefunctions}.
 Since for each $s,t\in \mathbb{C}\setminus \{0\}$ 
 (we do not need all the possible pairs, but just one pair) and  $(\varepsilon,r)\in G_d$, we have that $\varphi_{{\alpha}_i,s,t}(\varepsilon,r)$ converges to $\varphi_{{\alpha},s,t}(\varepsilon,r)$ as $i$ goes to infinity. 
 
 
The convergence of these positive functions is condition (iii) in \cite[Proposition 1.C.6]{BaHa19} for the continuity of $\Psi$.
Thus, the conclusion follows.
\end{proof}

Using that $G_d$ is a type I group (virtually abelian and unimodular), the first part of~\cite[Theorem 7.36]{Fol} (Plancherel Theorem) tells us that there exists a unique measure $\omega$ on 
$G_d$ so that for any $V\in L^1(G_d)\cap L^2(G_d)$ (w.r.t.\  Haar measure),
\begin{align}\label{eq:parsD}
 \int_{G_d} |V(g)|^2\, dg
 =\int_{\widehat{G_d}}
 \operatorname{Tr}\left(\hat V(\rho)\;
 {\hat V(\rho)^*}\right) \,d\omega(\rho),
\end{align}
where $\hat{V}(\rho)=\int_G V(g)\rho(g^{-1})\, dg = \sum_{g\in G_d} V(g) \rho(g^{-1})$ is the Fourier transform of $V$ and $\operatorname{Tr}$ stands for the Trace operator. 
The trace operator
 $\operatorname{Tr}\left(\hat V(\rho)\; {\hat V(\rho)}^*\right)$ is well-defined because $\hat V(\rho)$
 is Hilbert-Schmidt and it is square integrable on $G_d$
 (this is the case for any type I, unimodular group, see~\cite[Section 7.4]{Fol}). For a proper definition of the Trace operator we refer to, for instance,~\cite[Appendix A]{Fol}.
 In the set up of $G_d$, one can simply think of $\operatorname{Tr}$ as the trace of a complex valued matrix.

Lemma \ref{Prop: induced-planch-meas} provides a strategy to compute the Plancherel measure of $\widehat{G_d}$ as a slight modification of the pushforward measure of a measure over $[0,2\pi)^d\setminus T_d$.
Given a precise description of the dual space $\widehat{G_d}$ and  of its Plancherel measure, it becomes possible to obtain a concrete inversion formula \eqref{eq:parsD}.
A concrete description of this formula is a key ingredient in the development of the results of this document.

In what follows, we focus on the computation of this formula for a specific subclass of groups considered in this section, namely the multidimensional infinite dihedral groups.
The calculation of a concrete formula \eqref{eq:parsD} is sensitive to variations among  the different groups treated here as it depends on several factors, such as the vector space dimension of $K$ and on the parametrization given in Lemma \ref{Prop: induced-planch-meas}, which are crucial for  the LCLT obtained in this paper. 
For this reason, we focus on semidirect products of $\Z^d$ and $\Z/2\Z$, which constitute the simplest cases to treat. If, for instance, one replaces $\Z/2\Z$ with another finite group (in order to form a semidirect product with $\Z^d$), a completely new computation of the dual space and an analogue of Lemma \ref{Prop: induced-planch-meas} would be required. Moreover, such modifications may give rise to more complicated irreducible representations, possibly of dimension greater than $2$.


 
By Lemma~\ref{Prop: induced-planch-meas},  the pushforward measure $\Psi_*\sigma$ over $\widehat{G_d}$, where $\sigma$ denotes the Lebesgue measure over $[0,2\pi)^d\setminus T_d$, allows us to describe the Plancherel measure $\omega$ over $\widehat{G_d}$ as discussed in the previous paragraphs.

\begin{lemma}\label{lemma: Plancherel-measure}
    The measure $\hat{\sigma}:=\frac{1}{(2\pi)^d}\Psi_*\sigma$ is the Plancherel measure over $\widehat{G_d}$.
\end{lemma}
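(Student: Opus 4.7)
The plan is to verify directly that $\hat\sigma$ satisfies the Plancherel identity~\eqref{eq:parsD}; by the uniqueness clause of~\cite[Theorem 7.36]{Fol} this identifies it as the Plancherel measure. Since $T_d$ is finite and hence $\sigma$-null, the one-dimensional representations $\rho_{\theta,\gamma}$ contribute nothing to the right-hand side of~\eqref{eq:parsD}, and by definition of the pushforward the identity to check reduces to
\[
\sum_{(\varepsilon,m)\in G_d} |V(\varepsilon,m)|^2 = \frac{1}{(2\pi)^d}\int_{[0,2\pi)^d}\operatorname{Tr}\bigl(\hat V(\rho_\theta)\hat V(\rho_\theta)^*\bigr)\,d\theta
\]
for every $V\in L^1(G_d)\cap L^2(G_d)$.

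First I would evaluate $\hat V(\rho_\theta)$ in closed form. Using $(\varepsilon,m)^{-1}=(\varepsilon,-\varepsilon m)$ together with the explicit matrices in~\eqref{eq: 2-dim-rep} and writing $\hat V_\pm(\theta):=\sum_{m\in\Z^d} V(\pm 1,m)\,e^{i\langle m,\theta\rangle}$ for the ordinary Fourier transform of the slice $V(\pm 1,\cdot):\Z^d\to\C$, this yields
\[
\hat V(\rho_\theta)=\begin{pmatrix}\hat V_+(-\theta) & \hat V_-(\theta)\\ \hat V_-(-\theta) & \hat V_+(\theta)\end{pmatrix}.
\]
Since the underlying Hilbert space is two-dimensional, $\operatorname{Tr}(\hat V(\rho_\theta)\hat V(\rho_\theta)^*)$ collapses to the Hilbert--Schmidt norm squared, i.e.\ the sum of the squared moduli of the four entries.

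Next I would integrate over the torus and invoke the classical Parseval formula on $\Z^d$. The symmetry $\int|\hat V_\pm(-\theta)|^2\,d\theta=\int|\hat V_\pm(\theta)|^2\,d\theta$ (via the change of variables $\theta\mapsto-\theta\pmod{2\pi}$) pools the four terms, and then $\int_{[0,2\pi)^d}|\hat V_\pm(\theta)|^2\,d\theta=(2\pi)^d\sum_m|V(\pm 1,m)|^2$ reassembles the right-hand side into $\sum_{(\varepsilon,m)}|V(\varepsilon,m)|^2$, up to an overall normalization constant.

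The main delicate point is the bookkeeping of the factor $2$ arising from the $2$-to-$1$ parametrization $\theta\mapsto\rho_\theta$: since $\rho_\theta$ and $\rho_{-\theta}$ are unitarily equivalent by~\cite[Theorem 6.43]{Fol}, they project to the same element of $\widehat{G_d}$, so $\Psi_*\sigma$ double-counts each $2$-dimensional class. This must be reconciled either by restricting to a fundamental domain for the involution $\theta\mapsto-\theta$ or by absorbing the factor into the prefactor in front of $\Psi_*\sigma$; once that is settled, the remaining ingredients---the explicit form of $\hat V(\rho_\theta)$ and the reduction to Parseval on $\Z^d$---are routine.
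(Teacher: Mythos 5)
Your approach is essentially the same as the paper's: express $\hat V(\rho_\theta)$ entrywise, recognise $\operatorname{Tr}\bigl(\hat V\hat V^*\bigr)$ as the Hilbert--Schmidt norm, integrate using Parseval on $\Z^d$, and conclude by uniqueness of the Plancherel measure (you invoke Folland's Theorem~7.36; the paper invokes Dixmier's Theorem~18.8.2 and dispatches the one-dimensional classes separately via Dixmier's Proposition~18.8.5, but that difference is cosmetic). Your closed form for $\hat V(\rho_\theta)$ is correct.

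The genuine gap is the factor-of-$2$ accounting you flag but leave open. If you actually carry it out you will find it does not close with the prefactor $\frac{1}{(2\pi)^d}$: the four entries of $\hat V(\rho_\theta)$ contribute
\[
\frac{1}{(2\pi)^d}\int_{[0,2\pi)^d}\operatorname{Tr}\bigl(\hat V(\rho_\theta)\hat V(\rho_\theta)^*\bigr)\,d\theta
=2\sum_{m\in\Z^d}\bigl(|V(1,m)|^2+|V(-1,m)|^2\bigr),
\]
which is twice $\|V\|_{L^2(G_d)}^2$ (a quick sanity check: for $V=\delta_e$ one gets $\operatorname{Tr}(I\cdot I^*)=2$ identically, and $\hat\sigma$ has total mass $1$, giving $2\ne\|\delta_e\|_2^2=1$). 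This is exactly the double-count you anticipate from the $2$-to-$1$ cover $\theta\mapsto[\rho_\theta]=[\rho_{-\theta}]$. The paper's displayed trace formula actually has the same surplus: there the bracket $e^{-i\langle r-m,\theta\rangle}+e^{i\langle r-m,\theta\rangle}$ integrates to $2(2\pi)^d\delta_{r,m}$, and the $2$ is silently dropped in the final equality. So "once that is settled, the rest is routine" is not a substitute for settling it; carrying the computation through shows the prefactor needs to be $\frac{1}{2(2\pi)^d}$ (equivalently, one should push forward from a fundamental domain for the involution $\theta\mapsto-\theta$). You identified the one genuinely delicate point of the argument, which the paper elides; you should resolve it rather than defer it.
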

\begin{proof}
 First, note that the function of positive type of the irreducible representations of dimension 1 of $G_d$ are not square-summable for any $s\in \mathbb{C}$.
 Thus, \cite[Proposition 18.8.5]{Di77} implies that  the set of irreducible representations of dimension 1 of $G_d$ is null with respect to the Plancherel measure.\\
On the other hand, for every $\theta\in [0,2\pi)^d\setminus T_d$
\begin{align*}
    \tr(\hat{f}(\rho_{{\theta}})\hat{f}(\rho_{{\theta}})^*)=\sum_{m,r\in\Z^d}\left[f(1,r)\overline{f(1,m)}+f(r,-1)\overline{f(-1,m)}\right]\left[e^{-i\langle r-m,{\theta}\rangle}+e^{i\langle r-m,{\theta}\rangle}\right].
\end{align*}
 Therefore, for each $f\in L^1(G_d)\cap L^2(G_d)$
 \begin{align*}
     \int_{\widehat{G_d}}\tr[\hat{f}(\rho)\hat{f}(\rho)^*]d\hat{\sigma}(\rho)&=\dfrac{1}{(2\pi)^d}\int_{[0,2\pi)^d\setminus T_d} \tr[\hat{f}(\rho_{{\theta}})\hat{f}(\pi_{{\theta}})^*]d\sigma({\theta})\\
     &=\sum_{r\in\Z^d}[|f(1,r)|^2+|f(-1,r)|^2]=\int_{G_d} |f(g)|^2 dg.
 \end{align*}
 Thus, \cite[Theorem 18.8.2]{Di77} implies that $\hat{\sigma}$ is the Plancherel measure for $\widehat{G_d}$. 
\end{proof} 


Since for $\theta\in [0,2\pi)^d\setminus T_d$ we have a representation of $\rho_\theta$ using matrices, for $f\in L^1(G_d)$, the Fourier transform $\widehat{f}$ is given by
$$\widehat{f}(\rho_\theta)=
\begin{pmatrix} 
\sum_{m\in\Z^d}f(1,m) e^{i\langle m,\theta\rangle } & \sum_{m\in\Z^d}f(-1,m)e^{i\langle m,\theta\rangle} \\
 \sum_{m\in\Z^d}f(-1,m)e^{-i\langle m,\theta\rangle} & \sum_{m\in\Z^d}f(1,m) e^{-i\langle m,\theta\rangle}
\end{pmatrix}.$$
Moreover, as $\widehat{f}(\rho_\theta)$ is 
a matrix, the trace operator $\operatorname{Tr}$ is simply the trace of a matrix.
If $\nu$ is a measure on the group $G_d$, then
\begin{eqnarray}\label{eq:hatnu}
\hat\nu(\rho_\theta) 
 &=& \begin{pmatrix}
    \sum_{m\in\Z^d} \nu(1,m) e^{i \langle m,\theta\rangle} & \sum_{m\in\Z^d} \nu(-1,m) e^{-i \langle m, \theta\rangle}   \\
      \sum_{m\in\Z^d} \nu(-1,m) e^{i \langle m,\theta\rangle} & \sum_{m\in\Z^d} \nu(1,m) e^{-i \langle m,\theta\rangle}
    \end{pmatrix}.
 \end{eqnarray}

Using the explicit form for our Plancherel measure, \cite[Theorem 7.36]{Fol} provides an expression for the inversion formula. That is, for any $f\in L^1(G_d)\cap L^2(G_d)$ and $g\in G_d$,
\(
 f(g)=\frac{1}{(2\pi)^d}\int_{[0,2\pi)^d\setminus T_d}
 \operatorname{Tr}\left(\rho_\theta(g)\;
 \widehat{f}(\rho_\theta)\right)\,d\theta.\)
In particular, given $\nu$ a measure on the group and writing
$(\nu^{*})^n(g)$ for the $n$-fold convolution,
\begin{align}\label{eq:invnu}
 (\nu^{*})^n(g)=\frac{1}{(2\pi)^d}\int_{[0,2\pi)^d\setminus T_d}
 \operatorname{Tr}\left(\rho_\theta(g)\;
 \widehat{\nu}^n(\rho_\theta)\right)\,d\theta
\end{align}
 and for any bounded set $E$ in $G_d$,
 \begin{align}\label{eq:invD}
 1_E(y)=\frac{1}{(2\pi)^d}\int_{[0,2\pi)^d\setminus T_d}
\operatorname{Tr}\left(\rho_\theta(y)\cdot 
 \widehat{1_E}(\rho_\theta)\right)\,d\theta,
\end{align}
where $\widehat{1_E}$ is the Fourier transform of $1_E$.

\paragraph{The particular case  $d=1$: the infinite dihedral group $D_\infty$.}

Here we follow~\cite[Section 3 A]{BaHa19}.
The infinite dihedral group $D_\infty$ can also be represented in matrix form as
\begin{equation}\label{eq:Dnf}
D_\infty = \left\{ \begin{pmatrix} \varepsilon & m \\ 0 & 1 \end{pmatrix}: m \in \Z ,\  \varepsilon=\pm 1\right\},
\text{ and we write }
(\varepsilon, m)=\begin{pmatrix} \varepsilon & m \\ 0 & 1 \end{pmatrix}.
\end{equation}
We note that $R^m$ corresponds to $(1,m)$ and $SR^m$ corresponds to $(-1,m)$, where $R$ stands for rotation and $S$ for reflection.
For $\theta\in [0,2\pi)\setminus T_1$, the unitary representation $\rho_\theta$ are $2$-dimensional and for $m\in\Z$ is given as in \eqref{eq: 2-dim-rep}:
\begin{equation*}
\rho_\theta(1,m) =
\begin{pmatrix} 
e^{im\theta} & 0 \\ 
0 & e^{-im\theta}
\end{pmatrix}, \quad
\rho_\theta(-1,m) =
\begin{pmatrix} 
0 & e^{ im\theta} \\ 
 e^{-im\theta} & 0
\end{pmatrix}.
\end{equation*}
In light of \eqref{eq: change-basis-rep}, when $\theta\in [0,2\pi)\setminus T_1$, these representations are irreducible and the set of these representations has full measure for the Plancherel measure of $\hat{D}_\infty$. On the other hand, if $\theta\in\{0,\pi\}$, $\rho_\theta$ can be written as direct sums of $1$-dimensional representations. 
The group $D_\infty$ has four $1$-dimensional representations given by:
\begin{equation*}\label{eq:rhos}
\begin{cases}
\rho_{0,1}(\varepsilon, m) = 1, \qquad \qquad & \rho_{\pi,1}(\varepsilon, m) = (-1)^m,\\
\rho_{0,-1}(\varepsilon, m) = \varepsilon, &  \rho_{\pi,-1}(\varepsilon, n) = (-1)^m \varepsilon.
\end{cases}
\end{equation*}
The full unitary dual of $D_\infty$ is
\(
\hat{D}_\infty = \{\rho_{0,1}, \rho_{0,-1}, \rho_{\pi,1}, \rho_{\pi,-1} \} \cup \{ \rho_\theta \mid \theta \in [0,2\pi)\setminus T_1 \},
\)
with Plancherel measure given as in Lemma \ref{lemma: Plancherel-measure}.

\section{LCLT for the random walk (RW) on $G_d= \Z/2\Z\ltimes_\phi \Z^d$ (i.i.d. set up)}
\label{sec:dinfiid}

In this section we obtain a LCLT
for the RW on $G_d= \Z/2\Z\ltimes_\phi \Z^d$  via the irreducible representations recalled in Section~\ref{subs:REVD}. We will use part of the argument below directly in obtaining LCLT for the cocycle $\psi$ and this is our motivation for including a new proof. Also, we require no symmetry condition and no finite support for the measure $\nu$ on the group $G_d$,
but we do require some conditions to ensure that the limit covariance matrix is invertible.

In the RW setup, write $S_n=X_1\cdot\ldots\cdot X_n$ and $(\nu^*)^n$ for the $n$-th fold  convolution. Then
\(
 (\nu^{*})^n(g)=\P(\omega\in\Omega: S_n(\omega)=g).
\)

\begin{prop}\label{prop:rw}
Let $X_1,\ldots, X_n$ be independent random variables on some probability space $(\Omega,\mathcal F,\P)$ taking values in $G_d$, distributed according to the measure $\nu$.
Assume that $X_1\in L^{2}(\P)$, equivalently $\nu$ has $L^{2}$ moments.
Assume that
\begin{align}\label{aprw}
 g.c.d.\{n_0\in\N: (\nu^*)^{n_0} (e)>0 \}=1.
\end{align}
Then for any $(\pm 1, r)\in G_d$, as $n\to\infty$,
\[
  n^{d/2}\P(\omega\in\Omega: S_n(\omega)=(\pm 1, r))=n^{d/2}(\nu^{*})^n(\pm 1,r)\to\Phi\left(0\right),
\]
where $\Phi$ is the density of a Gaussian random variable with mean $0$ and covariance matrix $\Sigma$. To ensure that $\Sigma$ is an invertible matrix
we require some further conditions.
\begin{itemize}
 \item [(a)] The matrix $S=\sum_{m\in\Z^d} (\nu(1,m)+\nu(-1,m) )m\cdot m^T$ is invertible.
 
 \item[(b)] Let  $w_{\pm}=\sum_{m\in \Z^d}\nu( \pm 1, m) m$ be a vector in $\R^d$ and set $A_{-}=\sum_{m\in \Z^d}\nu( - 1, m)$.
 Define the scalars $\beta_0=w_{+}^T S^{-1}w_{+}$, $\beta_1=w_{+}^T S^{-1}w_{-}$, $\beta_2=w_{-}^T S^{-1}w_{-}$.
 We require that $A_{-}>0$ and that $\left(1+\frac{\beta_0}{A_{-}}\right)
\left(1-\frac{\beta_2}{A_{-}}\right)+\left(\frac{\beta_1}{A_{-}}\right)^2\ne 0$.
\end{itemize}
 \end{prop}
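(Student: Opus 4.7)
My plan is to start from the representation-theoretic inversion formula~\eqref{eq:invnu} applied to $\nu^{*n}$, rescale $\theta=u/\sqrt n$ to obtain
\[
n^{d/2}(\nu^*)^n(\pm1,r)=\frac{1}{(2\pi)^d}\int_{\sqrt n([0,2\pi)^d\setminus T_d)}\operatorname{Tr}\bigl(\rho_{u/\sqrt n}(\pm1,r)\,\widehat\nu^{\,n}(\rho_{u/\sqrt n})\bigr)\,du,
\]
and then split the integral into a central ball $|u|\le\eps\sqrt n$ and its complement. On the complement I would use the aperiodicity hypothesis~\eqref{aprw} together with the continuity of $\theta\mapsto\rho_\theta$ from Proposition~\ref{Prop: induced-planch-meas} to conclude that the spectral radius of $\widehat\nu(\rho_\theta)$ is uniformly bounded by some $\kappa<1$ (the exceptional set $T_d$ of reducible representations has Plancherel measure zero and is handled by a small-neighbourhood argument), producing an exponentially small tail contribution.

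The heart of the argument is the spectral analysis of the $2\times2$ matrix $\widehat\nu(\rho_\theta)$ from~\eqref{eq:hatnu} near $\theta=0$. Because this matrix is \emph{not} self-adjoint, I would avoid the standard self-adjoint perturbation calculus and work directly from its (explicit) characteristic polynomial. At $\theta=0$, $\widehat\nu(\rho_0)=\bigl(\begin{smallmatrix}A_+ & A_- \\ A_- & A_+\end{smallmatrix}\bigr)$ has eigenvalues $1$ and $1-2A_-$, diagonalised by the matrix $U$ from~\eqref{eq: change-basis-rep}. Using the $L^{2+\delta^*}$ moment assumption, a Taylor expansion shows that the discriminant of the characteristic polynomial stays positive near $0$, the eigenvalues remain real, and the top eigenvalue satisfies
\[
\lambda_1(\theta)=1-\tfrac12\,\theta^T\Sigma\,\theta+o(|\theta|^2),\qquad \Sigma=S+\tfrac{1}{A_-}\bigl(w_+w_+^T-w_-w_-^T\bigr).
\]
Hypothesis~(a) makes $S$ invertible, and the matrix determinant lemma applied to the rank-two update $\Sigma-S$ gives $\det\Sigma=\det(S)\bigl[(1+\beta_0/A_-)(1-\beta_2/A_-)+(\beta_1/A_-)^2\bigr]$, so hypothesis~(b) is exactly the invertibility of $\Sigma$. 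A direct computation with the spectral projection $P_1(0)=\tfrac12\bigl(\begin{smallmatrix}1&1\\1&1\end{smallmatrix}\bigr)$ yields $\operatorname{Tr}(\rho_\theta(\pm1,r)P_1(0))=\cos\langle r,\theta\rangle$, the same for both signs.

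Combining these ingredients on the central ball, the spectral decomposition $\widehat\nu^{\,n}(\rho_\theta)=\lambda_1(\theta)^n P_1(\theta)+\lambda_2(\theta)^n P_2(\theta)$ gives pointwise convergence of the integrand to $e^{-\frac12 u^T\Sigma u}\cos\langle r/\sqrt n,u\rangle$ (since $\lambda_1(u/\sqrt n)^n\to e^{-\frac12 u^T\Sigma u}$, while $\lambda_2^n$ decays exponentially because $|1-2A_-|<1$ under $A_->0$). A uniform quadratic upper bound $\lambda_1(\theta)\le 1-c|\theta|^2$ on the central ball supplies a Gaussian majorant, and dominated convergence then produces the $N(0,\Sigma)$ density $\Phi(r/\sqrt n)$. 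The main obstacles I expect are (i) establishing the uniform quadratic upper bound on $\lambda_1$ despite the non-self-adjointness of $\widehat\nu(\rho_\theta)$ (which forces one to argue via the discriminant rather than analytic perturbation theory), (ii) upgrading the pointwise strict inequality $|\lambda_{1,2}(\theta)|<1$ for $\theta\ne 0$ to a uniform bound $\kappa<1$ on the tail, with particular care near the reducible locus $T_d$, and (iii) ensuring the stated convergence is uniform in $r$, which requires a Riemann--Lebesgue-type argument in the regime where $|r|/\sqrt n$ is large.
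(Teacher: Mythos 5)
Your proposal follows essentially the same route as the paper: inversion via the representation-theoretic Plancherel formula, a uniform spectral-radius bound $\kappa<1$ away from $0_d$ under the aperiodicity hypothesis, a Taylor expansion of the two (distinct, since the discriminant is $4A_-^2>0$ at $0_d$) eigenvalues of $\widehat\nu(\rho_\theta)$ near $\theta=0_d$, the same identification of the limiting quadratic form $\Sigma=S+\tfrac{1}{A_-}(w_+w_+^T-w_-w_-^T)$ together with the matrix determinant lemma to recover the invertibility criterion in (b), and finally the rescaling $\theta\mapsto u/\sqrt{n}$ with a Gaussian majorant. The only presentational difference is that the paper organizes $M(\theta)^n$ via the Cayley--Hamilton recursion of Lemma~\ref{lemma:m} (expressing $M^n$ as $a_n M+b_n I$, which is agnostic to whether the eigenvalues separate), while you use the spectral decomposition $\lambda_1^nP_1+\lambda_2^nP_2$ and justify distinctness of eigenvalues through the positivity of the discriminant; these are equivalent near $0_d$, and your version of the covariance formula is in fact the one consistent with the determinant identity stated in the proposition and in the paper's own invertibility discussion.
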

The precise form of $\Sigma$ is recorded in the proof: see Section~\ref{prop:rw}.

\subsection{Main ingredients of the proof}
\label{subsec: ingr}
Recalling the formalism in Section~\ref{subs:REVD},
we need to understand the Fourier transform of i.i.d. random variables $X_i$, equivalently the Fourier transform of $\nu$.
Recalling~\eqref{eq:hatnu}, the corresponding Fourier transform is
\begin{eqnarray}\label{eq:mtheta}
M(\theta)=\hat\nu(\rho_\theta)
&=& \begin{pmatrix}
    \sum_{m\in\Z^d} \nu(1,m) e^{i \langle m,\theta\rangle} & \sum_{m\in\Z^d} \nu(-1,m) e^{-i \langle m, \theta\rangle}   \\
      \sum_{m\in\Z^d} \nu(-1,m) e^{i \langle m, \theta\rangle} & \sum_{m\in\Z^d} \nu(1,m) e^{-i \langle m, \theta\rangle}
    \end{pmatrix}.
\end{eqnarray}

Let $\theta\in [0,2\pi)^d\setminus T_d$ with $T_d$ as defined in~\eqref{eq:td} and write $0_d$ for the $d$ dimensional vector
of zeros.
Assuming $\nu(\pm 1, m) > 0$ for at least some $m\in\Z^d$ such that
$e^{i \langle m,\theta\rangle} \neq 1$ (which holds under~\eqref{aprw}), the row  (and also column) sums of $M(\theta)$ are strictly less than $1$ in absolute value. Assume that this maximal absolute value is $c \in (0,1)$, then
$\| M(\theta)v \|_\infty \leq c \| v \|_\infty$. Hence the norm 
$\| M(\theta) \| < c < 1$.
This includes the case $\theta\in T_d\setminus \{0_d\}$.
So, under~\eqref{aprw} there exists $\eps_0\in (0,1)$ and $\delta>0$
so that
\begin{align}\label{pirw}
 \|M(\theta)^n\|=\|\hat\nu(\rho_\theta)^n\|<\eps_0^n,\text{ for all } \theta\in \left([0,2\pi)^d\setminus T_d\right)\setminus B_\delta(0_d).
\end{align}
Here $B_\delta(0_d)=\{\theta\in [0,2\pi)^d:|\theta|\le\delta\}=[-\delta,\delta]^d$, where $|\theta|$
stands for the sup-norm of the vector $\theta$.

We need to compute the trace of $\rho_\theta(\pm 1 , r) M(\theta)^n$
with $r\in\Z^d$ and $\rho_\theta(\pm 1 , r)$ described in~\eqref{eq: 2-dim-rep}
and $M(\theta)$ defined in~\eqref{eq:mtheta},
as to obtain (recall~\eqref{eq:invnu})
\[
 \nu^{*n}(\pm 1 , r)=\P(\omega\in\Omega: S_n(\omega)=(\pm 1 , r))=\frac{1}{(2\pi)^d} \int_{[0,2\pi)^d\setminus T_d}\tr(\rho_\theta(\pm 1 , r) M(\theta)^n)\, d\theta.
\]

We start with computing $M(\theta)^n$. We do so via the general lemma below.
\begin{lemma}\label{lemma:m}
Let $M$ be a $2 \times 2$ matrix with eigenvalues $a_+$ and $ a_-$. Then
\begin{equation}\label{eq:Mn}
M^n =
\begin{cases}
 \frac{a_+^n - a_-^n}{a_+ - a_-}
M + \frac{a_+a_-^n - a_- a_+^n}{a_+ - a_-} I & \text{ if } a_+ \neq a_-,\\[2mm]
n a_+^{n-1} M + (1-n)a_+^n I & \text{ if } a_+ = a_- \neq 0.
\end{cases}
\end{equation}
\end{lemma}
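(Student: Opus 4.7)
The plan is to exploit the Cayley-Hamilton theorem, which asserts that $M$ annihilates its characteristic polynomial $x^2 - (a_+ + a_-) x + a_+ a_- $. Consequently $M^2 = (a_++a_-)M - a_+ a_- I$, and inductively every power $M^n$ lies in the two dimensional subspace spanned by $I$ and $M$. So I would write
\begin{align*}
 M^n = \alpha_n M + \beta_n I
\end{align*}
for scalars $\alpha_n,\beta_n$ to be determined, and the task reduces to identifying these scalars in the two cases.

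In the distinct eigenvalue case $a_+\ne a_-$, I would determine $\alpha_n,\beta_n$ by applying the identity to the two (linearly independent) eigenvectors of $M$, yielding the linear system
\begin{align*}
 a_+^n = \alpha_n a_+ + \beta_n, \qquad a_-^n = \alpha_n a_- + \beta_n,
\end{align*}
whose unique solution is exactly $\alpha_n = (a_+^n - a_-^n)/(a_+ - a_-)$ and $\beta_n = (a_+ a_-^n - a_- a_+^n)/(a_+ - a_-)$, as claimed.

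For the repeated eigenvalue case $a_+=a_-\ne 0$, two routes are available. The cleanest is to pass to the limit $a_-\to a_+$ in the previous formula, using the elementary limits
\begin{align*}
 \lim_{a_-\to a_+} \frac{a_+^n - a_-^n}{a_+-a_-} = n a_+^{n-1},
 \qquad
 \lim_{a_-\to a_+} \frac{a_+ a_-^n - a_- a_+^n}{a_+-a_-} = (1-n) a_+^n,
\end{align*}
which follow from L'H\^opital (or directly from differentiating $x^n$). Alternatively, since both sides of the claimed identity are invariant under conjugation, I can reduce to Jordan form and verify directly: the scalar case $M=a_+I$ is immediate, while for a single Jordan block the standard computation $M^n = \bigl(\begin{smallmatrix} a_+^n & n a_+^{n-1}\\ 0 & a_+^n\end{smallmatrix}\bigr)$ matches $n a_+^{n-1} M + (1-n) a_+^n I$.

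The argument is entirely elementary and I do not expect any obstacle; it is included because the explicit form \eqref{eq:Mn} is the template that will later be applied with $M=M(\theta)$ and its perturbative analogue for the twisted transfer operator, where the dichotomy between simple and coalescing eigenvalues drives the subsequent spectral analysis.
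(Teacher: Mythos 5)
Your proof is correct and, like the paper's, starts from the Cayley--Hamilton theorem, so the core idea is shared; the bookkeeping differs. The paper turns Cayley--Hamilton into the two-term linear recursion $M^n = \tau M^{n-1} - \Delta M^{n-2}$ and then solves that recursion by the ansatz $M^n = a_+^n A + a_-^n B$ (resp. $n a_+^n A + a_+^n B$), determining the matrices $A,B$ from the initial conditions $n=0,1$. You instead use Cayley--Hamilton only to observe that $M^n$ lies in $\operatorname{span}\{I,M\}$, then pin down the scalar coefficients by applying the identity to the two eigenvectors. These are interchangeable: the paper's route avoids invoking diagonalizability, while yours replaces the recursion by a $2\times 2$ Vandermonde system. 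One small caveat: your ``pass to the limit $a_-\to a_+$'' step is not literally a limit inside a fixed problem (there is no one-parameter family of matrices being varied), so on its own it is informal; but you already supply the clean fallback via Jordan form, which closes the argument. So the proposal is sound, and the choice between it and the paper's version is purely cosmetic.
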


\begin{proof}
We use the Cayley-Hamilton theorem, saying that $M$ solves its own characteristic equation
$$
M^2 - \tau M + \Delta I = 0,
$$
for the trace $\tau = \tr(M)$ and determinant $\Delta = \det(M)$.
Thus the powers of $M$ satisfy the recursive formula
$M^n = \tau M^{n-1} - \Delta M^{n-2}$ for all $n \geq 2$ (which can be verified by an induction argument).
This has the formal solution
\begin{equation}\label{eq:recursol}
 M^n =
\begin{cases} a_+^n A + a_-^n B, \qquad
A+B = I,\quad  a_+ A + a_- B =  M,
& \text{ if } a_+ \neq a_-,\\[1mm]
n a_+^n A + a_+^n B, \qquad
B = I,\quad  a_+ A + a_+ B =  M,
& \text{ if } a_+ = a_- \neq 0.
\end{cases}
\end{equation}
Solving for the matrices $A$ and $B$, and inserting in the first part of \eqref{eq:recursol} gives \eqref{eq:Mn}.
\end{proof}

We will apply this lemma to the matrix $M(\theta)$ defined in~\eqref{eq:mtheta}.
Write $a_+(\theta)$ and $a_{-}(\theta)$ for the corresponding eigenvalues.
Define 
\begin{align}\label{eq:anbn}
 a_n(\theta) = \frac{a_+(\theta)^n - a_-(\theta)^n}{a_+(\theta) - a_-(\theta)}
 \quad \text{ and } \quad
 b_n(\theta) = \frac{a_+(\theta) a_-(\theta)^n - a_-(\theta) a_+(\theta)^n}{a_+(\theta) - a_-(\theta)}.
\end{align}
Then
\begin{eqnarray*}
 \tr( \rho_\theta(1,r) M(\theta)^n) &=&
a_n(\theta) \tr( \rho_\theta( 1,r) M(\theta)) + b_n(\theta)  \tr( \rho_\theta(1,r) I) \\
&=& a_n(\theta) \sum_{m \in \Z^d} \nu(1,m) \left( e^{i\langle \theta, m\rangle} e^{i\langle \theta, r\rangle}+ e^{-i\langle \theta, m\rangle}e^{-i\langle \theta, r\rangle} \right)
+ b_n(\theta) (e^{i\langle \theta, r\rangle} + e^{-i\langle \theta, r\rangle}),
\end{eqnarray*}
and
\begin{eqnarray*}
\tr( \rho_\theta(-1,r) M(\theta)^n) &=&
a_n(\theta) \tr( \rho_\theta(-1,r) M) + b_n  \tr( \rho_\theta(-1,r) I) \\
&=& a_n(\theta) \sum_{m \in \Z^d} \nu(1,m) \left(e^{-i\langle m,\theta\rangle}e^{i\langle r,\theta\rangle}+e^{i\langle m,\theta\rangle}e^{-i\langle r,\theta\rangle}\right).
\end{eqnarray*}
Solving the characteristic polynomial 
\[
 a(\theta)^2-\tau(\theta) a(\theta)+\Delta(\theta) I=0,
\]
where $\tau(\theta)=\tr(M(\theta))$ and $\Delta(\theta)=\det (M(\theta))$.
The quadratic formula gives
\begin{equation}\label{eiv}
 a_{\pm}(\theta)=\frac{\tau(\theta)\pm \sqrt{\tau(\theta)^2-4\Delta(\theta)}}{2}.
\end{equation}
To simplify this, we write expansions in $\theta$ as $\theta\to 0_d$.
At this point, we introduce some further notation. 
Given $m\in\Z^d$ and $\theta\in [0,2\pi)^d\setminus T_d$, let
\begin{align*}
     A_\pm &=\sum_{m\in \Z^d}\nu(\pm 1,m),\\
     B_{\pm}(\theta)&= \sum_{m\in \Z^d}\nu(\pm 1, m) \langle m,\theta\rangle,\\
     C_{\pm}(\theta) &= \sum_{m\in \Z^d}\nu(\pm 1, m) \langle m,\theta\rangle^2.
    \end{align*}
Expanding around $0_d$,
\begin{align*}
 \tau(\theta)=2 A_{+}-C_{+}(\theta)^2(1+o(1))\text{ and } \Delta(\theta)=A_{+}^2-A_{-}^2
 +\Delta_0(\theta)(1+o(1)),
\end{align*}
where
\[
 \Delta_0(\theta)=(B_{+}(\theta)^2-2A_{+}C_{+}(\theta))(1+o(1))-(B_{-}(\theta)^2-2A_{-}C_{-}(\theta))(1+o(1)).
\]
So,
\begin{align*}
 \sqrt{\tau(\theta)^2-4\Delta(\theta)}
 &=2A_{-}-\frac{2}{A_{-}}\left((B_{+}(\theta)^2-B_{-}(\theta)^2)+2C_{-}(\theta)\right)+o(|\theta|^2),
\end{align*}
where $|\theta|$ stands for norm of the vector $\theta$.

Note that $A_{+}+A_{-}=1$.  
Recalling~\eqref{eiv}, we obtain
$$
a_+(\theta) = 1-\frac{1}{A_{-}}\left((B_{+}(\theta)^2-B_{-}(\theta)^2)+2C_{-}(\theta)\right)(1+o(1))= 
e^{-\frac{\langle\Sigma\theta,\Sigma\theta\rangle}{2}(1+o(1))}
$$
for
\begin{align}\label{eq:sigiid}
 \Sigma =\frac{1}{2A_{-}}\sum_{m\in \Z^d}(\nu( 1, m)+\nu(-1, m)) m\cdot m^T+\frac{w_{+}\cdot w_{+}^T-w_{-}\cdot w_{-}^T}{2A_{-}},
\end{align}
where (as defined in the statement of Proposition~\ref{prop:rw})
\begin{align*}
 w_{\pm}=\sum_{m\in \Z^d}\nu(\pm 1, m) m \in \R^d.
\end{align*}
Write $p:=A_{+}-A_{-}<1$.
This together with~\eqref{eiv} gives
$$
a_-(\theta) = p  + O(|\theta|^2).
$$

\paragraph{Leading terms and final form of $\operatorname{Tr}( \rho_\theta(\pm 1,r) M(\theta)^n)$. }
Recalling the expressions of $a_n(\theta)$ and $b_n(\theta)$ in~\eqref{eq:anbn}, and the form of $a_{\pm}(\theta)$, we see that 
\begin{eqnarray*}\label{anbn}
a_n(\theta) &=& \frac{a_+(\theta)^n}{a_+(\theta)-a_-(\theta)}=\frac{e^{-n\frac{\langle\Sigma\theta, \Sigma\theta\rangle}{2}(1+o(1)) }}{1-p+O(|\theta|^2)}+O(p^n),\\
b_n(\theta) &=& \frac{-a_-(\theta)a_+(\theta)^n}{a_+(\theta)-a_-(\theta)}+O(p^n) =-p\frac{e^{-n\frac{\langle\Sigma\theta, \Sigma\theta\rangle}{2}(1+o(1)) }}{1-p+O(|\theta|^2)}+O(e^{-n\frac{\langle\Sigma\theta, \Sigma\theta\rangle}{2} }|\theta|^2)+O(p^n).
\end{eqnarray*}

Since $p<1$, the $O(p^n)$ terms can be ignored.
Thus,
\begin{eqnarray}\label{eq:plus1}
 \tr( \rho_\theta(1,r) M(\theta)^n) &=& \frac{e^{-n\frac{\langle \Sigma\theta, \Sigma\theta\rangle}{2}(1+o(1)) }}{1-p+O(|\theta|^2)} \sum_{m \in \Z^d} \nu(1,m) \left( e^{i\langle \theta, m\rangle} e^{i\langle \theta, r\rangle}+ e^{-i\langle \theta, m\rangle}e^{-i\langle \theta, r\rangle} \right)\\
\nonumber&+&\left(-p\frac{e^{-n\frac{\langle \Sigma\theta, \Sigma\theta\rangle}{2}(1+o(1)) }}{1-p+O|\theta|^2)}+O(e^{-n\frac{\langle \Sigma\theta, \Sigma\theta\rangle}{2} }|\theta|^2)\right) \left(e^{i\langle \theta, r\rangle} + e^{-i\langle \theta, r\rangle}\right)
\end{eqnarray}
and 
\begin{eqnarray}\label{eq:minus1}
\tr( \rho_\theta(-1,r) M(\theta)^n)
&=&\frac{e^{-n\frac{\langle \Sigma\theta, \Sigma\theta\rangle}{2}(1+o(1)) }}{1-p+O(|\theta|^2)}\sum_{m \in \Z^d} \nu(1,m) \left(e^{-i\langle m,\theta\rangle}e^{i\langle r,\theta\rangle}+e^{i\langle m,\theta\rangle}e^{-i\langle r,\theta\rangle}\right)\\
\nonumber&+&O(e^{-n\frac{\langle \Sigma\theta, \Sigma\theta\rangle}{2} }|\theta|^2).
\end{eqnarray}

\paragraph{Invertibility of $\Sigma$.}

By assumption (a) in the statement of Proposition~\ref{prop:rw},
we know that the matrix $S=\sum_{m\in \Z^d}(\nu( 1, m)+\nu(-1, m)) m\cdot m^T$ is invertible.
A direct computation shows that $\det(\Sigma)=\det(S)\left(\left(1+\frac{\beta_0}{A_{-}}\right)
\left(1-\frac{\beta_2}{A_{-}}\right)+\left(\frac{\beta_1}{A_{-}}\right)^2\right)$.

By assumption (b) in the statement of Proposition~\ref{prop:rw}, $\left(1+\frac{\beta_0}{A_{-}}\right)
\left(1-\frac{\beta_2}{A_{-}}\right)+\left(\frac{\beta_1}{A_{-}}\right)^2\ne 0$.
Therefore, $\Sigma$ is invertible.

\subsection{Completing the proof of Proposition~\ref{prop:rw} }
\label{proof:proprw}

As a consequence of~\eqref{pirw}, we can reduce the calculations
to the region $B_\delta(0_d)=[-\delta,\delta]^d$, with $\delta$ small. 
Recall that by~\eqref{pirw}, $\|M(\theta)^n\|<\eps_0^n$ for all 
$\left([0,2\pi)^d\setminus T_d\right)\setminus B_\delta(0_d)$.
So,
\begin{align*}
 (\nu^*)^n(\pm 1,r)=\frac{1}{(2\pi)^d} \int_{[0,2\pi)^d\setminus T_d}  \tr(\rho_\theta(\pm 1,r) M(\theta)^n) \, d\theta
 =\frac{1}{(2\pi)^d} \int_{[-\delta,\delta]^d}  \tr(\rho_\theta(\pm 1,r) M(\theta)^n) \, d\theta+O(\eps_0^n).
\end{align*}
Using~\eqref{eq:plus1},
\begin{align*}
 \int_{[-\delta,\delta]^d} \tr(\rho_\theta(1,r) M(\theta)^n) \, d\theta
&= \int_{[-\delta,\delta]^d}\frac{e^{-n\frac{\langle \Sigma\theta, \Sigma\theta\rangle}{2}(1+o(1)) }}{1-p+O(|\theta|^2)} \sum_{m \in \Z^d} \nu(1,m) \left( e^{i\langle \theta, m\rangle} e^{i\langle \theta, r\rangle}+ e^{-i\langle \theta, m\rangle}e^{-i\langle \theta, r\rangle} \right)\, d\theta
\\
&+ \int_{[-\delta,\delta]^d} \left(-p\frac{e^{-n\frac{\langle \Sigma\theta, \Sigma\theta\rangle}{2}(1+o(1)) }}{1-p+O(|\theta|^2)}+O(e^{-n\frac{\langle \Sigma\theta, \Sigma\theta\rangle}{2} }|\theta|^2)\right) \left(e^{i\langle \theta, r\rangle} + e^{-i\langle \theta, r\rangle}\right)\, d\theta,
\end{align*}
where $\Sigma$ is as defined in~\eqref{eq:sigiid}.
Recall $\sum_{m \in \Z^d}\nu(1,m)=A_{+}$. Expanding further in $\theta$,
\begin{align*}
 \int_{[-\delta,\delta]^d}& \tr(\rho_\theta(1,r) M(\theta)^n) \, d\theta
= (A_{+}-p)\int_{B_\delta(0_d)} \left(e^{i\langle \theta, r\rangle}+ e^{-i\langle \theta, r\rangle} \right)\frac{e^{-n\frac{\langle \Sigma\theta, \Sigma\theta\rangle}{2}(1+o(1)) }}{1-p+O(|\theta|^2)}  d\theta\\
&+\int_{[-\delta,\delta]^d}\frac{e^{-n\frac{\langle \Sigma\theta, \Sigma\theta\rangle}{2}(1+o(1)) }}{1-p+O(|\theta|^2)}O\left(\left|\sum_{m \in \Z^d} \nu(1,m)\langle m,\theta\rangle\right|\right)  d\theta+ \int_{[-\delta,\delta]^d} O(e^{-n\frac{\langle \Sigma\theta, \Sigma\theta\rangle}{2} }|\theta|^2)\, d\theta\\
&=\frac{A_{+}-p}{1-p}\int_{[-\delta,\delta]^d} \left(e^{i\langle \theta, r\rangle}+ e^{-i\langle \theta, r\rangle} \right)e^{-n\frac{\langle \Sigma\theta, \Sigma\theta\rangle}{2}(1+o(1)) } d\theta\\
&+\int_{[-\delta,\delta]^d}e^{-n\frac{\langle \Sigma\theta, \Sigma\theta\rangle}{2}(1+o(1)) }O\left(\left|\sum_{m \in \Z^d} \nu(1,m)\langle m,\theta\rangle\right|\right)  d\theta.
\end{align*}
Since $\nu$ has a first moment, the second integral is 
$O\left(\int_{[-\delta,\delta]^d}|\theta|e^{-n\frac{\langle \Sigma\theta, \Sigma\theta\rangle}{2}(1+o(1)) } d\theta\right)$ and it can be absorbed  in the first.
Thus,
\begin{align*}
  \int_{[-\delta,\delta]^d}\tr(\rho_\theta(1,r) M(\theta)^n) \, d\theta
  =\frac{A_{+}-p}{1-p}\int_{[-\delta,\delta]^d} \left(e^{i\langle \theta, r\rangle}+ e^{-i\langle \theta, r\rangle} \right)e^{-n\frac{\langle \Sigma\theta, \Sigma\theta\rangle}{2}(1+o(1)) } d\theta\, (1+o(1)).
  \end{align*}
  
With a change of variable $\theta\to-\theta$ (in the second term),
\begin{align*}
 \int_{[-\delta,\delta]^d} \left(e^{i\langle \theta, r\rangle}+ e^{-i\langle \theta, r\rangle} \right)e^{-n\frac{\langle \Sigma\theta, \Sigma\theta\rangle}{2}(1+o(1)) } d\theta&=\int_{[-\delta,\delta]^d} e^{i\langle \theta, r\rangle}e^{-n\frac{\langle \Sigma\theta, \Sigma\theta\rangle}{2}(1+o(1)) } d\theta+ \int_{[-\delta,\delta]^d} e^{i\langle \theta, r\rangle}e^{-n\frac{\langle \Sigma\theta, \Sigma\theta\rangle}{2}(1+o(1)) } d\theta\\
 &=2\int_{[-\delta,\delta]^d} e^{i\langle \theta, r\rangle}e^{-n\frac{\langle \Sigma\theta, \Sigma\theta\rangle}{2}(1+o(1)) } d\theta.
\end{align*}
Recall that $A_{+}+A_{-}=1$ and that $A_{+}-A_{-}=p$. So, 
$\frac{A_{+}-p}{1-p}=\frac{1}{2}$.
Thus,
\begin{align*}
 \int_{[-\delta,\delta]^d}\tr(\rho_\theta(1,r) M(\theta)^n) \, d\theta
 =\int_{[-\delta,\delta]^d} e^{i\langle \theta, r\rangle}e^{-n\frac{\langle \Sigma\theta, \Sigma\theta\rangle}{2}(1+o(1)) } d\theta.
\end{align*}

The change of variable $\theta\to \frac{y}{ n^{1/2}}$ gives
\begin{align*}
  n^{d/2}(\nu^*)^n( 1,r)=\frac{1}{(2\pi)^d} \int_{[-n^{1/2}\delta,n^{1/2}\delta]^d} e^{i\frac{\langle y, r\rangle}{n^{1/2}}}e^{-\frac{\langle\Sigma y, \Sigma y\rangle}{2}(1+o(1))} dy\,(1+o(1)) =\Phi\left(-\frac{r}{\sqrt n}\right)\,(1+o(1)),
\end{align*}
where $\Phi$ is the density of the Gaussian with mean $0$ and covariance
matrix $\Sigma$, which is invertible, as explained at the end of subsection~\ref{subsec: ingr}.
                                               
The same approach works for
$(\nu^*)^n( 1,r)= \frac{1}{(2\pi)^d} \int_{B_\delta(0_d)} \tr(\rho_\theta(-1,r) M(\theta)^n) \, d\theta$
starting from equation~\eqref{eq:minus1}.

\section{Overview of Gibbs Markov maps and statement of LCLT}\label{subsec:GM}

Roughly speaking, Gibbs Markov (GM) maps are (infinite branched) uniformly expanding maps with bounded distortion and big images.
We recall the definitions in more detail.
Let $(X,\mu)$ be a topological space and $\mu$ a probability measure on it. Let $T:X\to X$ be a topologically mixing ergodic measure-preserving transformation, piecewise continuous w.r.t.
a countable partition $\alpha=\{a\}$. The map $T$ has full branches in the sense that $T(a)=X$ for all $a\in\{a\}$.

Define $s(x,x')$ to be the least integer $n\ge0$ such that $T^nx$ and $T^nx'$ lie in distinct partition elements.
Since $T$ is expanding,  $s(x,x')=\infty$ if and only if $x=x'$ one obtains that $d_\beta(x,x')=\beta^{s(x,x')}$
for $\beta\in(0,1)$ is a metric.

Let $\varphi=\log\frac{d\mu}{d\mu\circ T}:X\to\R$.
We say that $T$ is a GM map if the following hold w.r.t. the countable partition $\{a\}$:
\begin{itemize}

\parskip = -2pt
\item $T (a)$ is a union of partition elements and $T|_a:a\to T(a)$ is a measurable bijection for each $a\in\{a\}$
such that $T(a)$ is the union of elements of the partition $\bmod\mu$.
\item $\inf_a\mu(T(a))>0$. This is usually referred to as the big image and preimage (BIP) property.
\item
There are constants $C>0$, $\beta\in(0,1)$ such that
$|e^{\varphi(x)}-e^{\varphi(x')}|\le Cd_\beta(x,x') e^{\varphi(x)}$ for all $x,x'\in a$, $a\in\{a\}$.
\end{itemize} The invariant measure $\mu$ is known
to have the Gibbs property: there exists $C>0$ such that $C^{-1}\mu(a)\le e^{\varphi_n(x)}\le C\mu(a)$
for all $x\in a$ and $a\in\alpha_n=\bigvee_{j=0}^{n-1}T^{-j}\alpha$.
See, for instance,~\cite[Chapter 4]{Aaronson} and~\cite{AD01}
for background on Gibbs-Markov maps.

\subsection{Statement of the main results}
\label{subsec:mres}

Let $\psi:X\to G_d$ with $G_d= \Z/2\Z\ltimes_\phi \Z^d$ as described in Section~\ref{subs:REVD}. Because we work with GM maps and $G_d$ is discrete, we assume, without loss of generality, that $\psi$ is constant on each $a\in\alpha$.
Since $\psi$ takes values in $G_d$,
we write
$$
\psi(x)=(\varepsilon(x), \psi^{\Z^d}(x)),
$$
where $\eps(x)=\pm 1$, and $\psi^{\Z^d}$ is the $\Z^d$ component of $\psi$.

In the i.i.d. case treated in Section~\ref{sec:dinfiid}
 no symmetry assumption is required to obtain the LCLT. It seems plausible
 in the dynamical systems set up  one can get rid of the symmetry assumption below, but it does simplify the exposition.

The  symmetry assumption that we require is that 
there exists a $\mu$-preserving involution $S:X\to X$ so that
 
 \begin{equation}\label{eq:assS}
   T=T\circ S, \quad\eps\circ S=-\eps \quad \text{ and }  S \text{ is a bijection on } \{a\}.
  \end{equation}
  The second part of~\eqref{eq:assS} and $T$-invariance of $\mu$
  imply that
  \(
  \mu(\{\eps=1\})=\mu(\{\eps=-1\})=1/2.
 \)
The last assumption on $S$ is saying that by composing with $S$ we move from one full branch to another or from one partition element to another partition element in a unique way. By looking at $Sx$ instead of $x$ we just swap from one  full branch to another full branch or from one partition element to another partition element.

  To obtain a LCLT for $\psi$, we require that
  \begin{equation}\label{eq:2plus}
   \psi^{\Z^d}\in L^{2+\delta^*}(\mu),\quad\text{ for some }\delta^*>0.
  \end{equation}
 It is known that under the assumption $\psi^{\Z^d}\in L^{2}(\mu)$, things are more difficult, see~\cite{Gou04}. One could work with $\psi^{\Z^d}\in L^{2}(\mu)$, but to simplify we assume~\eqref{eq:2plus}.

The LCLT we are after reads as 

\begin{thm}\label{prop:LCLTGM}
 Let $\psi:X\to G_d$. Suppose that the symmetry assumption~\eqref{eq:assS} holds.
 Assume~\eqref{eq:2plus} and further suppose that a suitable aperiodicity condition (as in~\eqref{eq:cohomology-like}) holds. Then as $n\to\infty$,
 \[
  n^{d/2}\mu\left(x\in X: \psi_n(x)=(\pm 1,r)\right)\to\Phi_1\left(0\right),
\]
where $\Phi_1$ is the density of the Gaussian with mean $0$ and covariance\footnote{The precise form of $\Sigma_1$ is provided below equation~\eqref{eigenvformula}.}  matrix $\Sigma_1^2$. 
More generally, for $v, w:X\to\C^2$ `sufficiently regular',
\begin{align*}
 \lim_{n\to\infty}n^{d/2}\int_X 1_{\{\psi_n(x)=(\pm 1,r)\}} \langle v, w\circ T^n\rangle\, d\mu=\Phi_1(0)\left\langle\int_X v\, d\mu\, , \int_X w\, d\mu\right\rangle,
\end{align*}
where $\langle\cdot ,\cdot \rangle$ is the usual scalar product.
\end{thm}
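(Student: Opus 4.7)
The plan is to transport the spectral analysis of the matrix $M(\theta)=\hat\nu(\rho_\theta)$ from Proposition~\ref{prop:rw} to the dynamical setting by introducing, for each $\theta\in[0,2\pi)^d\setminus T_d$, a twisted transfer operator $\hat L_\theta$ on $\cB_\beta\otimes\C^2$ defined by
\[
  (\hat L_\theta\vec f)(x):=L\bigl(\rho_\theta(\psi(\cdot))^*\vec f(\cdot)\bigr)(x),
\]
so that the cocycle property gives $(\hat L_\theta^n\vec f)(x)=L^n(\rho_\theta(\psi_n)^*\vec f)(x)$. This plays the role of $M(\theta)$ in Section~\ref{sec:dinfiid}. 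Applying the Plancherel inversion formula~\eqref{eq:invD} to $1_{\{(\pm1,r)\}}$ in $G_d$, the quantity to be estimated rewrites as
\[
  n^{d/2}\int_X v\,1_{\{\psi_n=(\pm1,r)\}}\,w\circ T^n\,d\mu
  =\frac{n^{d/2}}{(2\pi)^d}\int_{[0,2\pi)^d\setminus T_d}\operatorname{Tr}\!\bigl(\rho_\theta((\pm1,r))^*\,\mathcal M_n(\theta)\bigr)\,d\theta,
\]
where $\mathcal M_n(\theta)$ is the $2\times2$ matrix with entries $\int_X v\,(\hat L_\theta^n\vec 1)_{ij}\,w\,d\mu$ after rewriting $\rho_\theta(\psi_n)$ via $L^n$-duality. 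The LCLT will then follow by splitting the $\theta$-integral into a small neighbourhood of $0$, where spectral perturbation applies, and its complement, where aperiodicity yields exponential decay (exactly as in~\eqref{pirw}).

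\textbf{Spectrum of $\hat L_0$.} Using the change of basis $U$ from~\eqref{eq: change-basis-rep}, the operator $\hat L_0$ is conjugate to the block-diagonal operator $L\oplus L_\eps$ on $\cB_\beta^2$, where $L_\eps v:=L(\eps\,v)$ and $L$ is the untwisted transfer operator. From~\eqref{eq:op}, $L$ has a simple leading eigenvalue $1$ with spectral gap. I claim that under the involution symmetry~\eqref{eq:assS} combined with the aperiodicity~\eqref{eq:apdinf}, $L_\eps$ has spectral radius strictly less than $1$ on $\cB_\beta$; indeed, $\int L_\eps v\,d\mu=\int\eps v\,d\mu$ together with $\mu(\{\eps=1\})=\mu(\{\eps=-1\})=1/2$ shows that $1$ is not an eigenvalue in mean, and aperiodicity rules out other unit-modulus eigenvalues. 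Consequently $\hat L_0$ has a simple leading eigenvalue $1$ with one-dimensional eigenspace, isolated in the spectrum, while the remaining eigenvalue $p$ satisfies $|p|<1$.

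\textbf{Perturbation and tail.} The map $\theta\mapsto\hat L_\theta$ is analytic in operator norm on $\cB_\beta^2$ because $\rho_\theta(\psi(x))$ is analytic in $\theta$, with Taylor remainders controlled by $\psi^{\Z^d}\in L^{2+\delta^*}(\mu)$ from~\eqref{eq:2plus}. Standard Kato perturbation theory then produces, for $|\theta|$ small, a leading eigenvalue $\lambda(\theta)$ with $\lambda(0)=1$, a second eigenvalue $p(\theta)$ with $|p(\theta)|<1$ uniformly, and analytic spectral projections. Expanding $\lambda(\theta)$ to second order via the dynamical analogues of $\tau(\theta)$, $\Delta(\theta)$, $a_\pm(\theta)$ from Subsection~\ref{subsec: ingr}---with $\nu$-integrals replaced by $\mu$-integrals and with the symmetry~\eqref{eq:symmetry} killing the cross terms---yields
\[
  \lambda(\theta)=\exp\!\bigl(-\tfrac12\langle\Sigma_1\theta,\Sigma_1\theta\rangle(1+o(1))\bigr),
\]
with $\Sigma_1$ the covariance matrix built from $\psi^{\Z^d}$. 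Outside this neighbourhood, the aperiodicity~\eqref{eq:apdinf} gives $\|\hat L_\theta^n\|_{\cB_\beta^2}\le C\varrho^n$ for some $\varrho<1$, so that contribution is negligible.

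\textbf{Conclusion and main obstacle.} Applying Lemma~\ref{lemma:m} to $\hat L_\theta$, with $\lambda(\theta)$ and $p(\theta)$ playing the roles of $a_\pm(\theta)$, the trace integrals decompose exactly as in~\eqref{eq:plus1}--\eqref{eq:minus1}; the change of variable $\theta\mapsto y/\sqrt n$ and dominated convergence performed in Section~\ref{proof:proprw} then produce $\Phi_1(r/\sqrt n)$ for the first assertion. For the second assertion, the rank-one leading spectral projection of $\hat L_\theta$ at $\theta=0$ contributes the factor $\int v\,d\mu\,\int w\,d\mu$ (using $L^n v\to\int v\,d\mu$ from~\eqref{eq:op} against $w\in L^\infty$), yielding $\Phi_1(0)\int v\,d\mu\int w\,d\mu$. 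The delicate step is establishing the spectral picture at $\theta=0$: under~\eqref{eq:assS} and~\eqref{eq:apdinf}, showing that $L_\eps$ has spectral radius strictly less than $1$ and that the leading eigenspace of $\hat L_0$ is genuinely one-dimensional. The symmetry $\eps\circ S=-\eps$, $T\circ S=T$ gives a formal sign-flipping which, combined with aperiodicity, should forbid $\eps$ from being cohomologous to a $\pm1$-valued coboundary on $\cB_\beta$; making this rigorous requires careful transfer-operator bookkeeping. A secondary but nontrivial issue is carrying out the second-order expansion of $\lambda(\theta)$ using only the $L^{2+\delta^*}$ moment~\eqref{eq:2plus}, likely done via Fourier-space truncation and a Taylor expansion of $\rho_\theta(\psi(x))$.
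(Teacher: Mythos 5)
Your overall strategy matches the paper's: Plancherel inversion over $\widehat{G_d}$, a twisted transfer operator playing the role of $M(\theta)=\hat\nu(\rho_\theta)$, spectral perturbation near $\theta=0_d$, and aperiodicity away from $0_d$. Your packaging is different: you treat $\hat L_\theta$ as a single operator on $\cB_\beta\otimes\C^2$ and analyse its spectrum directly (after the $U$-conjugation $\hat L_0\cong L\oplus L_\eps$), whereas the paper never takes the spectrum of the full matrix operator $L_{\rho_\theta}$. Instead it decomposes each of the four scalar operators $L_{\pm\theta}^{\pm1}$ spectrally (each with leading eigenvalue $1/2$ and the \emph{same} eigenvector under~\eqref{eq:assS}) and then tracks the $2\times2$ matrix $M_0(\theta)$ of scalar eigenvalues, to which Lemma~\ref{lemma:m} is applied exactly as in the i.i.d.\ case. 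This is what lets the paper reuse Subsection~\ref{proof:proprw} almost verbatim.

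There is a genuine gap in your justification of the spectral picture at $\theta=0_d$. You assert that $L_\eps v=L(\eps v)$ has spectral radius $<1$ because $1$ is ``not an eigenvalue in mean'' and ``aperiodicity~\eqref{eq:apdinf} rules out other unit-modulus eigenvalues.'' But~\eqref{eq:apdinf} is a condition on $\psi_+^{\Z^d}$ and becomes vacuous at $\theta=0_d$; it says nothing about the $\Z/2\Z$-character $\eps$. Whether $L_\eps$ has a unit-modulus eigenvalue is a separate cohomological question about $\eps$ alone. The claim is in fact true under the paper's hypotheses, but for a different reason: the sign-cancellation argument in the proof of Lemma~\ref{lemma:noev0} (picking branches $b,b'$ with $\eps|_b=1$, $\eps|_{b'}=-1$, which exist because $S$ exchanges them) goes through verbatim at $\theta=0_d$ for the $\rho_{0,-1}$-block; alternatively one can use $L_\eps=Q_0^1-Q_0^{-1}$ together with the operator-norm bound $\|Q_0^{\pm1}\|\le\delta_0<1/2$ that the paper relies on. Either way the argument you wrote (and the piece of the theorem's hypotheses you invoke) does not establish the step. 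Relatedly, your ``outside this neighbourhood, aperiodicity gives exponential decay'' conflates two distinct cases the paper must (and does) treat separately: for $\theta\notin T_d$ away from $0_d$, Lemma~\ref{lemma:appsi} uses~\eqref{eq:apdinf} via the symmetrised operator $K^+_\theta$, whereas for $\theta\in T_d\setminus\{0_d\}$ the $2$-dimensional representation is reducible and the contraction comes from Lemma~\ref{lemma:noev0} and Corollary~\ref{cor:pi}, which do not invoke~\eqref{eq:apdinf} at all. You should split these cases and supply the correct argument for each before the proposal is complete.
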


By `sufficiently regular` we mean that is an a suitable Banach space,
more precisely $v, w\in\cB(X,\C^2)$ as defined in Section~\ref{s:inv}.
 
We record some consequences of the LCLT for $\psi$.

\begin{corollary}\label{cor:1}Assume the set up of Theorem~\ref{prop:LCLTGM}.
 If $d=1,2$, then $T_\psi$ is conservative, ergodic and strong (Krickeberg) mixing.
 
 If $d\ge 3$, then $T_\psi$ is dissipative (that is, not conservative, and in fact, not recurrent).
\end{corollary}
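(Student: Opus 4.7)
The plan is to deduce both cases of the corollary from the local limit theorem of Theorem~\ref{prop:LCLTGM}, combined with standard arguments for infinite measure preserving group extensions. The dichotomy is driven by whether the series $\sum_n n^{-d/2}$ converges, i.e.\ whether $d\ge3$ or $d\le2$.

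For $d\ge 3$ the LCLT yields the summable bound $\mu(\psi_n=(1,0))=O(n^{-d/2})$, and analogously $\mu(\psi_n\in F)=O_F(n^{-d/2})$ for any finite $F\subset G_d$. The first Borel--Cantelli lemma then shows that $\mu$-a.e.\ orbit visits each bounded subset of $G_d$ only finitely often, so the cocycle $\psi_n$ is transient. Combined with Poincar\'e recurrence on the Gibbs--Markov base, this excludes any nontrivial conservative part for $T_\psi$, hence $T_\psi$ is dissipative.

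For $d=1,2$ the series $\sum_n n^{-d/2}$ diverges and Krickeberg mixing is read off from Theorem~\ref{prop:LCLTGM} by writing
\[
\hat\mu\bigl((A\times\{g\})\cap T_\psi^{-n}(B\times\{h\})\bigr)=\mu\bigl(\{x\in A:T^nx\in B,\ \psi_n(x)=hg^{-1}\}\bigr)
\]
for $A,B$ finite unions of partition elements, and applying the LCLT at the fixed element $hg^{-1}\in G_d$ (after a harmless translation of the cocycle by $g^{-1}$ in the argument of the test function) to obtain
\[
n^{d/2}\hat\mu\bigl((A\times\{g\})\cap T_\psi^{-n}(B\times\{h\})\bigr)\to\Phi_1(0)\,\hat\mu(A\times\{g\})\,\hat\mu(B\times\{h\}).
\]
Conservativity then follows by contradiction: a wandering set $W\subset X\times G_d$ of positive finite $\hat\mu$-measure, approximated by unions of such cylinder sets, would force $\liminf n^{d/2}\hat\mu(W\cap T_\psi^{-n}W)>0$, in conflict with $\hat\mu(W\cap T_\psi^{-n}W)=0$ for every $n\ge 1$.

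Ergodicity in the $d\le 2$ case is the delicate point. The idea is to exploit the spectral gap of the twisted transfer operators $L_\rho$ at nontrivial $\rho\in\widehat{G_d}$, developed in the course of proving Theorem~\ref{prop:LCLTGM} under the aperiodicity condition~\eqref{eq:apdinf}. Any bounded $T_\psi$-invariant function, decomposed along irreducible representations via the Plancherel inversion formula of Section~\ref{subs:REVD}, yields components $f_\rho$ satisfying $L_\rho f_\rho=f_\rho$; the spectral gap forces $f_\rho\equiv 0$ for every $\rho$ away from the trivial representation, while the trivial component reduces to a $T$-invariant bounded function on $X$, necessarily constant by ergodicity of the Gibbs--Markov base. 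This ergodicity step is the main obstacle in the whole argument: dissipativity in $d\ge 3$ is a direct Borel--Cantelli computation and conservativity/mixing for $d\le 2$ follow relatively directly from the LCLT, but pairing the decomposition of invariant functions with the spectral data of the twisted transfer operators requires the full representation-theoretic machinery already assembled for Theorem~\ref{prop:LCLTGM}.
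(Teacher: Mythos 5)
Your plan for dissipativity ($d\ge3$, Borel--Cantelli), conservativity ($d\le2$, divergence of $\sum n^{-d/2}$) and Krickeberg mixing ($d\le2$, read off from the LCLT applied to cylinders) is the same standard route the paper points to via \cite{AD01}, so those parts are fine in spirit, though the wandering-set contradiction is more naturally run via $\sum_n\hat\mu(A\cap T_\psi^{-n}A)=\infty$ for a cylinder $A$ (Halmos recurrence criterion) rather than by approximating an arbitrary wandering set.

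The genuine gap is the ergodicity step. You propose to decompose a bounded $T_\psi$-invariant function $F$ along $\widehat{G_d}$ via the Plancherel inversion formula of Section~\ref{subs:REVD}. That formula applies to functions in $L^1(G_d)\cap L^2(G_d)$, but a bounded $T_\psi$-invariant function on the infinite measure space $X\times G_d$ is in general in neither $L^1(\hat\mu)$ nor $L^2(\hat\mu)$ (indeed the constant $1$ is the target conclusion and is not integrable), so the Fourier decomposition $F=\int\operatorname{Tr}(\rho(\cdot)\hat F(\cdot,\rho))\,d\omega(\rho)$ is not available, and the claimed reduction to ``$L_\rho f_\rho=f_\rho$ for each $\rho$'' does not follow as stated. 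The route the paper intends (and what \cite{AD01} actually does) is different: from the LCLT one gets conservativity, and then ergodicity follows from conservativity together with the aperiodicity hypothesis~\eqref{eq:apdinf} via the cohomological/essential-range characterization of ergodicity for conservative group extensions (Schmidt-type criterion); the role of the spectral gap of the twisted operators is precisely to rule out nontrivial solutions of the cohomological equation, not to decompose an invariant $L^\infty$ function. Your instinct that the twisted spectral data is the key input is correct, but it must enter through the cocycle equation, not through a Plancherel expansion of $F$.
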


Using a main part of the ingredients used in the proof of Theorem~\ref{prop:LCLTGM} along with a renewal equation, namely a discrete version of the renewal equation in~\cite{Tho16}, we obtain the asymptotic of the first return time to the origin.

\begin{thm}\label{cor:2}Assume the set up of Theorem~\ref{prop:LCLTGM}.
Let $Y=X\times\{e\}$, where $e=(1,0_{\Z^d})$ is the neutral element of $G_d$, and let $\tau:Y\to\N$ be the first return time to $Y$.
 Let $\tilde\nu=\mu\otimes\delta_e$.
Then 
\begin{itemize}
 \item If $d=1$, then $\tilde\nu(\tau\ge n)=C_1 n^{-1/2}(1+o(1))$, for some $C_1>0$. 
 
 \item If $d=2$, then $\tilde\nu(\tau\ge n)=C_2 (\log n)^{-1}(1+o(1))$,
  for some $C_2>0$. 
  
  \item If $d\ge 3$, then $\lim_{n\to\infty}\tilde\nu(\tau\ge n)$ is a constant and $\sum_n \tilde\nu(\tau=n)< 1$.
\end{itemize}

\end{thm}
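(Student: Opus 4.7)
The plan is to combine the diagonal LCLT from Theorem~\ref{prop:LCLTGM},
\[
u_n:=\mu(\psi_n=e)=\Phi_1(0)\,n^{-d/2}(1+o(1)),
\]
with an operator renewal equation, and then extract the asymptotics of the return-time tail via Karamata's Tauberian theorem. Setting $f_n:=\tilde\nu(\tau=n)$ and $U(z)=\sum_{n\ge 0}u_n z^n$, $F(z)=\sum_{n\ge 1}f_n z^n$, the renewal relation (the discrete analogue of the one in~\cite{Tho16}) reads
\[
\hat L^n 1_Y|_Y=\sum_{k=1}^n R_k \,\hat L^{n-k}1_Y|_Y,
\]
where $\hat L$ is the transfer operator of $T_\psi$ and $R_k$ is the first-return transfer operator at time $k$. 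Pairing with $\mu$---which is preserved by the first-return map $P:Y\to Y$ since $\hat\mu(Y)=1$ and $\hat\mu$ is $T_\psi$-invariant---and using the spectral gap of $\hat R(1)=L_Y$ (itself Gibbs--Markov) to isolate the simple leading eigenvalue $\lambda(z)$ of $\hat R(z)=\sum_k R_k z^k$ near $z=1$, one obtains the scalar identity $U(z)=(1-F(z))^{-1}$ up to a smooth error, with $1-F(z)\sim 1-\lambda(z)$ as $z\to 1^-$.

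The three regimes of Theorem~\ref{cor:2} then correspond to three Karamata asymptotics for $U(1-s)$ as $s\to 0^+$. For $d=1$, Karamata gives $U(1-s)\sim \Phi_1(0)\sqrt{\pi}\,s^{-1/2}$, hence $1-F(1-s)\sim s^{1/2}/(\Phi_1(0)\sqrt\pi)$; using the elementary identity $\sum_n \tilde\nu(\tau>n)\,z^n=(1-F(z))/(1-z)$ together with the Tauberian converse (applicable since $n\mapsto \tilde\nu(\tau>n)$ is non-increasing) yields $\tilde\nu(\tau\ge n)\sim (\Phi_1(0)\pi)^{-1} n^{-1/2}$, so $C_1=1/(\Phi_1(0)\pi)$. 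For $d=2$, the identity $\sum_{n\ge 1} z^n/n=-\log(1-z)$ gives $U(1-s)\sim \Phi_1(0)\log(1/s)$ (slowly varying), hence $1-F(1-s)\sim(\Phi_1(0)\log(1/s))^{-1}$; the slowly-varying version of the Karamata--Feller Tauberian theorem then produces $\tilde\nu(\tau\ge n)\sim(\Phi_1(0)\log n)^{-1}$, giving $C_2=1/\Phi_1(0)$. For $d\ge 3$, $\sum_n n^{-d/2}<\infty$, so $U(1)<\infty$ and the identity $U(1)(1-F(1))=1$ forces $F(1)=1-U(1)^{-1}<1$; hence $\sum_n f_n<1$, and monotone convergence yields $\tilde\nu(\tau\ge n)\downarrow \tilde\nu(\tau=\infty)=1-F(1)>0$, a positive constant.

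The main obstacle is the rigorous passage from the operator renewal equation to the scalar identity $U(z)=(1-F(z))^{-1}$: in a genuine Markov chain the identity $u_n=\sum_{k=1}^n f_k u_{n-k}$ would hold pointwise, but in the deterministic Gibbs--Markov context it only holds after a spectral reduction of $\hat R(z)$ near $z=1$. This is exactly where the ingredients from the proof of Theorem~\ref{prop:LCLTGM} are reused: the spectral perturbation of the twisted transfer operators on the irreducible representations $\rho_\theta$, combined with the inversion formula~\eqref{eq:invnu}, controls $\hat R(z)$ near $z=1$ and supplies the regular (or slowly) varying behaviour of $1-\lambda(z)$ required for the Tauberian step. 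The single most delicate analytic point is the slowly-varying case $d=2$, which demands the boundary ($\alpha=0$) extension of Karamata's Tauberian theorem.
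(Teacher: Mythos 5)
Your proposal captures the correct overall strategy — operator renewal equation, diagonal LCLT asymptotics, then a Tauberian/coefficient-extraction step — but the two proofs diverge on the mechanism used to reduce the operator renewal identity to a scalar one. The paper first establishes the operator identity of Lemma~\ref{lemm:ren}, then uses Remark~\ref{rmk:cor2,1} (a pointwise, $L^1(\mu)$-level refinement of the LCLT asymptotics, coming directly from the spectral perturbation of $L_{\rho_\theta}$ and the inversion formula without any integration over $X$) to control the \emph{function} $\sum_{m\ge 0}\tilde L_z^m 1$; it then applies $I-\tilde L_z$ and integrates over $Y$. At no point does it invoke a spectral gap or leading eigenvalue of the first-return operator $\tilde L_z$, which is your $\hat R(z)$. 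Your route instead asserts that $\hat R(1)=L_Y$ is Gibbs--Markov with a spectral gap and that one perturbs its simple leading eigenvalue $\lambda(z)$. That requires verifying that the first-return map is Gibbs--Markov (plausible in this setting, but an extra step the paper deliberately avoids); more seriously, it breaks down for $d\ge 3$, where $T_\psi$ is dissipative, the first-return map is only partially defined, $\mu|_Y$ is not $P$-invariant, $F(1)<1$, and $L_Y$ is sub-stochastic, so the ``leading eigenvalue $1$ at $z=1$'' picture you describe simply does not apply. (Your $d\ge 3$ conclusion is salvageable once the scalar renewal identity $U(z)(1-F(z))=1+o(1)$ is in hand, but your stated justification of that identity does not cover this case.) There is also a mild circularity in your resolution of what you call the ``main obstacle'': you say the irreducible-representation perturbation ``controls $\hat R(z)$ near $z=1$'', but that perturbation theory controls $L^n_{\rho_\theta}$, hence $u_n$ and hence $U(z)$; information about $\hat R(z)$ itself comes only indirectly through the very renewal identity you are trying to justify — which is why the paper works with $\sum_m\tilde L_z^m 1$ and never needs to analyse $\hat R(z)$ spectrally. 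Finally, for the coefficient-extraction step the paper uses Taylor-coefficient methods (\cite{Terhesiu16} for $d=1$, Flajolet--Odlyzko singularity analysis together with a convolution argument for $d=2$), whereas you use Karamata with the monotone converse; these are equivalent in spirit, and your version, including the slowly-varying case $d=2$, is a legitimate alternative.
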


Corollary~\ref{cor:1} is a standard consequence of LCLT (ergodicity, conservativity or dissipativity follows as in~\cite{AD01}, see also references therein). For $d=1,2$, strong mixing is also a standard consequence of LCLT. Once a LCLT, namely Theorem~\ref{prop:LCLTGM}, is obtained, the mechanism for obtaining strong mixing is the same as if working with $\Z^d$ extensions.

The proof of Theorem~\ref{cor:2} is included in Section~\ref{sec:compdih}. 
One could try to follow the arguments in~\cite{T22} to obtain the tail of $\tilde\nu(\tau=n)$, 
but that will require much lengthier arguments, which we omit.

The proof of Theorem~\ref{prop:LCLTGM}
is the difficult part; it constitutes the bulk of the remainder of the paper and it is concluded in Section~\ref{sec:compdih}.

\section{Inversion formula in the Gibbs Markov set up}
\label{s:inv}
For the purpose of this paper we choose to work with functions $v:X\to \C^2$.
The reason for this choice will become transparent in subsection~\ref{sec:inversion} below. 

\subsection{Perturbed transfer operator}

With the notation used in Section~\ref{subs:REVD},
let $\rho_\theta$, $\theta\in [0,2\pi)^d$.
Given $\psi:X\to G_d$, define 
the perturbed transfer operator $L_{\rho_\theta} v= L(\rho_\theta\circ\psi\, v)$, $v:X\to \C^2$.

Recall that
$
\psi(x)=(\varepsilon(x), \psi^{\Z^d}(x)),
$
where $\eps(x)=\pm 1$ and $\psi^{\Z^d}$ is the $\Z^d$ component of $\psi$. Then
$\rho_\theta\circ\psi(x)=\rho_\theta(\varepsilon(x), \psi^{\Z^d}(x))$.

\textbf{For $\theta\in [0,2\pi)^d\setminus T_d$, with $T_d$ as defined in~\eqref{eq:td}},
\[
\rho_\theta(1, \psi^{\Z^d}(x)) =
\begin{pmatrix} 
e^{i\langle \theta, \psi^{\Z^d}(x)\rangle} & 0 \\ 
0 & e^{-i\langle\theta,\psi^{\Z}(x)\rangle}
\end{pmatrix}, \quad
\rho_\theta(-1, \psi^{\Z^d}(x))=
\begin{pmatrix} 
0 & e^{ i\langle\theta,\psi^{\Z^d}(x)\rangle} \\ 
e^{-i\langle\theta,\psi^{\Z^d}(x)\rangle} & 0
\end{pmatrix}. 
\]

The operator $L_{\rho_\theta}$ can be iterated, using test-functions $v,w : X \to \C^2$  as follows (where $\cdot$ between vectors means inner product):
\begin{eqnarray*}
 \int_X L_{\rho_\theta} v \cdot w \, d\mu &=&
 \int_X \left(  \rho_\theta \circ \psi  v \right) \cdot w \circ T \, d\mu
 = \sum_{a \in \alpha}  \int_a \left( \rho_\theta \circ \psi(a) v \right) \cdot w \circ T \, d\mu. \\
 \int_X L_{\rho_\theta}^2 v \cdot w \, d\mu 
 &=& \int_X \left( \rho_\theta \circ \psi L_{\rho_\theta} v \right) \cdot w \circ T \, d\mu
 = \sum_{a \in \alpha} \int_a \left( \rho_\theta \circ \psi(a)  L_{\rho_\theta} v \right) \cdot w \circ T \, d\mu \\
 &=& \sum_{a \in \alpha_2} \int_a \left( \rho_\theta \circ \psi(a)
  \rho_\theta \circ \psi(Ta)  v\right) \cdot w \circ T^2 \, d\mu
 =\int_X \rho_\theta\circ\psi_2 v \cdot w\circ T^2 \, d\mu.
\end{eqnarray*}
And $\int_X L_{\rho_\theta}^n v \cdot w \, d\mu=\int_X \rho_\theta\circ\psi_n v \cdot w\circ T^n \, d\mu$  by induction.
The same applies to $f,g:X\to\C$, $\int_X L_{\rho_\theta}^n (f)  g \, d\mu=\int_X \rho_\theta\circ \psi_n f  (g\circ T^n) \, d\mu$.

The pointwise formula for the perturbed transfer operator,
$$
 L_{\rho_\theta} v(x)= L(\rho_\theta\circ\psi\, v)(x)
 =\sum_{y\in T^{-1}x} e^{\varphi(y)}\rho_\theta\circ\psi(y)\, v(y),
$$
 is
\begin{align*}L_{\rho_\theta} v(x) &=\sum_{y\in T^{-1}x, \psi(y)=(1,\psi^{\Z^d}(y))} e^{\varphi(y)}
\begin{pmatrix} 
e^{i\langle\theta,\psi^{\Z^d}(x)\rangle} & 0 \\ 
0 & e^{-i\langle\theta,\psi^{\Z^d}(y)\rangle}
\end{pmatrix}v(y)\\
&+\sum_{y\in T^{-1}x, \psi(y)=(-1,\psi^{\Z^d}(y))}e^{\varphi(y)}
\begin{pmatrix} 0&
 e^{i\langle\theta,\psi^{\Z^d}(y)\rangle} \\ 
e^{-i\langle\theta,\psi^{\Z^d}(y)\rangle}& 0
\end{pmatrix}v(y).
\end{align*}
where $v(y)=(v_1(y), v_2(y))^T\in \C^2$ and $\psi^{\Z^d}$ is the $\Z^d$ component of $\psi$.

\textbf{For $\theta\in T_d$, we have one dimensional irreducible representations (characters).}

 Let $\theta\in T_d$.
 By~\eqref{eq:opi}, $\rho_\theta=\rho_{\theta,1}\oplus\rho_{\theta,-1}$.
 Recall from the explanation after~\eqref{eq:opi} that this expression makes sense when we apply it to elements in
 $$
 \C^2=\C\begin{pmatrix}
 \frac{1}{\sqrt{2}}\ \\ 
 \frac{1}{\sqrt{2}}
 \end{pmatrix}\bigoplus\C\begin{pmatrix} 
 \frac{1}{\sqrt{2}}\ \\ 
 -\frac{1}{\sqrt{2}}
 \end{pmatrix}.
 $$
Recall $v:X\to\C^2$, $v=(v_1, v_2)$ and that $v=v_1 e_1 +v_2 e_2$, with $(e_1, e_2)$ standard Euclidean basis.
We express $v(x)$ as a linear combination of two orthonormal vectors in $\C^2$, via $u_+=\frac{1}{\sqrt 2}(1, 1)^T=\frac{1}{\sqrt 2}(e_1+e_2)$ (the `diagonal' direction) and  $u_-=\frac{1}{\sqrt 2}(1, -1)^T=\frac{1}{\sqrt 2}(e_1-e_2)$ (the `anti-diagonal' direction).
These form an orthonormal basis  for $\C^2$. A direct calculation shows that the coordinates of $v(x)$ in the  basis $\{u_+,u_-\}$ are $(v^{+}(x),v^{-}(x))=(\frac{v_1(x)+v_2(x)}{\sqrt 2}, \frac{v_1(x)-v_2(x)}{\sqrt 2})$.
 
 As a consequence, for $\theta\in T_d$, we can write $L_{\rho_\theta} =L_{\rho_{\theta,1}}\oplus L_{\rho_{\theta,-1}}$,
 where
 \begin{align}\label{eq:operatorchangebasis1}
 \begin{cases}
  L_{\rho_{\theta,1}}v^{+}(x) = \sum_{y \in T^{-1}(x)} e^{\varphi(y)} e^{i\langle \theta,\psi^{\Z^d}(y)\rangle} v^{+}(y), \\[2mm]
  L_{\rho_{\theta,-1}} v^{-}(x)= \sum_{y \in T^{-1}(x)} e^{\varphi(y)} e^{i\langle \theta,\psi^{\Z^d}(y)\rangle} \eps(y)v^{-}(y) .
 \end{cases}
\end{align}

Let $_{da}$ stand for diagonal-anti-diagonal basis.
As a matrix-representation w.r.t.\ this basis we get, for $n\ge 1$,
$$
(L_{\rho_\theta}^n)_{da}\begin{pmatrix}
                    v^{+}\\
                   v^{-} 
                  \end{pmatrix} = \begin{pmatrix}
                   L_{\rho_{\theta,1}}^n  & 0 \\
                   0 & L_{\rho_{\theta,-1}}^n
                  \end{pmatrix}_{da}
                  \begin{pmatrix}
                    v^{+}\\
                   v^{-} 
                  \end{pmatrix}.
$$
The entire analysis will be carried in Euclidean coordinates.
$(L_{\rho_\pi}^nv)_{da}$ will be in the Euclidean representation by conjugating via the matrix $U = U^{-1} = \frac{1}{\sqrt{2}} \binom{1 \ 1}{1\ -1}$ (as introduced in~\eqref{eq: change-basis-rep}).
This gives
\begin{align}\label{changebmult1111111111}
 (L_{\rho_\theta}^n)_{euc} = U^{-1} (L_{\rho_\theta}^nv)_{da} U
  = \frac12 \begin{pmatrix}
                   L_{\rho_{\theta,1}}^n  + L_{\rho_{\theta,-1}}^n  &  L_{\rho_{\theta,1}}^n- L_{\rho_{\theta,-1}}^n  \\
                    L_{\rho_{\theta,1}}^n - L_{\rho_{\theta,-1}}^n  &  L_{\rho_{\theta,1}}^n + L_{\rho_{\theta,-1}}^n 
                  \end{pmatrix}_{euc}.
\end{align}

The previous paragraphs tells us we need to use different types of analysis, for $\theta\in [0,2\pi)^d\setminus T_d$ working directly with $L_{\rho_\theta}$ and change of basis
for $\theta\in T_d$.

\subsection{Inversion formula}
\label{sec:inversion}

Let $E_{n,x}=\{\psi_n(x)\}$ for $x\in X$. Using the definition of the Fourier transform \eqref{eq:hatnu} for $1_{E_{n,x}}$ we obtain
\begin{align*}
    \widehat{1_{E_{n,x}}}(\rho_\theta)=\begin{pmatrix}
        \sum_{m\in \Z^d}1_{E_{n,x}}(1,m)e^{i\langle m,\theta\rangle}&
        \sum_{m\in \Z^d}1_{E_{n,x}}(-1,m)e^{i\langle m,\theta\rangle}\\
        \sum_{m\in \Z^d}1_{E_{n,x}}(-1,m)e^{-i\langle m,\theta\rangle}&
        \sum_{m\in \Z^d}1_{E_{n,x}}(1,m)e^{-i\langle m,\theta\rangle}
    \end{pmatrix}.
\end{align*}
Note that $1_{E_{n,x}}(1,m)$ is $1$ when $\psi_n(x)=(1,m)$ and $0$ otherwise.
Therefore, we have that 
\begin{align*}
     \widehat{1_{E_{n,x}}}(\rho_\theta)=
     \begin{cases}
         \begin{pmatrix}
        e^{i\langle \psi_n^{\Z^d}(x),\theta\rangle}&
        0\\
        0&
        e^{-i\langle \psi_n^{\Z^d}(x),\theta\rangle}
    \end{pmatrix}&\mbox{ if }\psi_n(x)=(1,\psi_n^{\Z^d}(x))\\[3mm]
    \begin{pmatrix}
        0&e^{i\langle \psi_n^{\Z^d}(x),\theta\rangle}\\
        e^{-i\langle \psi_n^{\Z^d}(x),\theta\rangle}&0
    \end{pmatrix}&\mbox{ if }\psi_n(x)=(-1,\psi_n^{\Z^d}(x))
     \end{cases}. 
\end{align*}
That is,
\begin{align*}
 \widehat{1_{E_{n,x}}}(\rho_\theta)=\rho_\theta\circ \psi_n(x)
 =\begin{pmatrix}
        1_{\varepsilon=1}(x)e^{i\langle \psi_n^{\Z^d}(x),\theta\rangle} & 1_{\varepsilon=-1}(x)e^{i\langle \psi_n^{\Z^d}(x),\theta\rangle}\\
    1_{\varepsilon=-1}(x)e^{-i\langle \psi_n^{\Z^d}(x),\theta\rangle} & 1_{\varepsilon=1}(x)e^{-i\langle \psi_n^{\Z^d}(x),\theta\rangle}
    \end{pmatrix}.
\end{align*}
Note that given $e_1=(1,0)^T$ and $e_2=(0,1)^T$,
\begin{align*}
 L_{\rho_\theta}^n(1e_1)(x) = 
 \begin{pmatrix}
1_{\varepsilon=1}(x) 
e^{i\langle \psi_n^{\Z^d}(x),\theta\rangle}\\
 1_{\varepsilon=-1}(x) e^{-i\langle \psi_n^{\Z^d}(x),\theta\rangle}
 \end{pmatrix},\quad
 \quad 
     L_{\rho_\theta}^n(1e_2)(x) = \begin{pmatrix}
       1_{\varepsilon=-1}(x) e^{i\langle \psi_n^{\Z^d}(x),\theta\rangle}\\
        1_{\varepsilon=1}(x) e^{-i\langle \psi_n^{\Z^d}(x),\theta\rangle}
 \end{pmatrix}.
\end{align*}
Thus,
\[
 \widehat{1_{E_{n,x}}}(\rho_\theta)=[L_{\rho_\theta}^n (1e_1(x))\quad\quad  L_{\rho_\theta}^n (1e_2(x))]: =M_{n,\theta}(1,1)^T(x).
\]
Therefore,
\begin{align}\label{eq:invdihGM2}
 1_{\{x\in X: \psi_n(x)=g\}}=\frac{1}{(2\pi)^d}\int_{\theta\in [0,2\pi)^d\setminus T_d}
 \operatorname{Tr}\left(\rho_{\theta}(g)\cdot 
 M_{n,\theta}(1, 1)^T(x)\right)\,d\theta
\end{align}
Integrating on both sides w.r.t.\ $\mu$ gives

\begin{align}\label{eq:invdihGM}
 \mu(\{x\in X: \psi_n(x)=g\})=\frac{1}{(2\pi)^d}\int_{[0,2\pi)^d\setminus T_d}
 \operatorname{Tr}\left(\rho_\theta(g)\, M_{n,\theta}^* (1,1)^T\right)\, d\theta,
\end{align}
where
\begin{align*}
      M_{n,\theta}^*(1,1)^T=\left[\int_X L_{\rho_\theta}^n (1e_1)\, d\mu\quad\quad  \int_X L_{\rho_\theta}^n (1e_2)\, d\mu\right].
     \end{align*}
Consider $v\in L^1(\mu)$,
 $w\in L^\infty$, $v,w:X\to \C^2$, $v=(v_1, v_2)^T$,$w=(w_1, w_2)^T$. Repeating the argument for the pointwise formula, this time with $1_{E_{n,x}} v_j(x) w_j(T^nx)$, $j=1,2$, integrating over the space  and
 using  that $\int_X L_{\rho_\theta}^n v_j \cdot w_j \, d\mu=\int_X \rho_\theta(\psi_n) v_j \cdot w_j\circ T^n \, d\mu$,
 we obtain
 \begin{align*}
 \int_X 1_{\{\psi_n(x)=g\}} v_1\, (w_1\circ T^n)\, d\mu=\frac{1}{(2\pi)^d}\int_{[0,2\pi)^d\setminus T_d}
 \operatorname{Tr}\left(\rho_\theta(g)M_{n,\theta}^*(v_1, w_1)\right)\, d\theta,
\end{align*}
 where
 \begin{align}\label{eq:mnstar}
      M_{n,\theta}^*(v_1, w_1)=\left[\int_X L_{\rho_\theta}^n (v_1e_1)\cdot w_1\, d\mu\quad\quad  \int_X L_{\rho_\theta}^n (v_1e_2)\cdot w_1\,d\mu\right]
     \end{align}
 and a similar formula for $v_2,w_2$.
 
 Using the formula for $\int_X 1_{\{\psi_n(x)=g\}} v_1\, w_1\circ T^n\, d\mu$ and $\int_X 1_{\{\psi_n(x)=g\}} v_2\, w_2\circ T^n\, d\mu$, we obtain

\begin{align}\label{eq:invdihGMfun}
 \int_X 1_{\{\psi_n(x)=g\}} \langle v, w\circ T^n\rangle\, d\mu=\frac{1}{(2\pi)^d}\int_{[0,2\pi)^d\setminus T_d}
 \operatorname{Tr}\left(\rho_\theta(g)M_{n,\theta}^*(v, w)\right)\, d\theta,
\end{align}
where
\begin{align*}
      M_{n,\theta}^*(v, w)=M_{n,\theta}^*(v_1, w_1)+ M_{n,\theta}^*(v_2, w_2).
     \end{align*}

\section{$L_{\rho_\theta}$ as an operator on $\cB_\beta(X, \C^2)$ }\label{section:spectral properties}

 Given $v:X\to\C^2$, write $v= (v_1, v_2)$. Define the seminorm 
\[
D_av=\max_{j=1,2}\sup_{x,x'\in a,\,x\neq x'}|v_j(x)-v_j(x')|/d_\beta(x,x'),\qquad |v|_\beta=\sup_{a\in\{a\}}D_a v.
\]In the same way, we let the norm of $L^\infty(\mu)$ be defined by 
$\|v\|_{\infty}=\max_{j=1,2}\|v_j\|_{\infty}$.
The space $\cB_\beta(X,\C^2)\subset L^\infty(\mu)$ consisting of the functions $v:X\to\C^2$  with norm
$\|v\|_{\cB_\beta(X,\C^2)}=\|v\|_{\infty}+|v|_\beta<\infty$ is a Banach space.

\subsection{$L_{\rho_\theta}$ acting on $\cB_\beta(X, \C^2)$ for $\theta\in\theta\in [0,2\pi)^d\setminus T_d$.}
\label{outsidetd}

For $\theta \in [0,2\pi)^d\setminus T_d$, the DF (Doeblin Fortet or Lasota Yorke) inequality $\|L_{\rho_\theta}^n v\|_{\cB_\beta(X,\C^2)}\le \gamma_0^n |v|_{\beta}
+ C\|v\|_\infty$, $\gamma_0\in (0,1)$, $C>0$  can be proved using the same pattern as the one in the proof of~\cite[Proof of Proposition 1.4]{AD01},
using the definition of the norm and the seminorm in $\cB_\beta(X,\C^2)$ provided in the previous paragraph.

Using that $\psi$ is constant on partition elements, we obtain the following 
continuity property of $L_{\rho_\theta}: \cB_\beta(X,\C^2)\to \cB_\beta(X,\C^2)$:

\begin{lemma}\label{lemma:Dcont} Let $\theta,\theta'\in [0,2\pi)^d\setminus T_d$.
Then
$\|L_{\rho_\theta} -L_{\rho_\theta'}\|_{\cB_\beta(X,\C^2)}\to 0$ as $\theta\to\theta'$.
\end{lemma}

\begin{proof}
By definition,
\begin{align*}
 &L_{\rho_\theta}v(x)
-L_{\rho_{\theta'}}v(x)\\
&=\sum_{y\in T^{-1}x,\,\psi(y)=(1,\psi^{\Z^d}(y)) } e^{\varphi(y)}
\begin{pmatrix} 
e^{i\langle\theta,\psi^{\Z^d}(y)\rangle}-e^{i\langle\theta',\psi^{\Z^d}(y)\rangle} & 0 \\ 
0 & e^{-i\langle\theta,\psi^{\Z^d}(y)\rangle}-e^{-i\langle\theta,\psi^{\Z^d}(y)\rangle}\\
\end{pmatrix}v(y)\\
&+\sum_{y\in T^{-1}x,\,\psi(y)=(-1,\psi^{\Z^d}(y)) } e^{\varphi(y)}
\begin{pmatrix} 
0 & e^{i\langle\theta,\psi^{\Z^d}(y)\rangle}-e^{i\langle\theta',\psi^{\Z^d}(y)\rangle}  \\ 
 e^{-i\langle\theta,\psi^{\Z^d}(y)\rangle}-e^{-i\langle\theta,\psi^{\Z^d}(y)\rangle}& 0\\
\end{pmatrix}v(y).
\end{align*}
After applying the matrix $\rho_\theta\circ \psi$
(in either case, $\psi(y)=(1,\psi^{\Z^d}(y))$
or $\psi(y)=(-1,\psi^{\Z^d}(y))$) to $v(y)=(v_1(y), v_2(y))^T$ we get a vector.
So, we can operate with taking the maximum on components, according to the definition of the norm on $\cB_\beta(X,\C^2)$.

Choose $\delta > 0$ arbitrary, and a subcollection $\alpha' \subset \alpha$ such that $\sum_{a \in \alpha'} \mu(a) < \delta$.
Then  $|e^{ i\langle\theta,\psi^{\Z^d}(y_a)\rangle}-e^{ i\langle\theta',\psi^{\Z^d}(y_a)\rangle}|\to 0$,
as $\theta\to\theta'$ for
$y_a = T^{-1}(y) \cap a$,
uniformly over all $x \in X$ and $a \in \alpha'$.
The same holds for $|e^{-i\langle\theta,\psi^{\Z^d}(y)\rangle}-e^{- i\langle\theta',\psi^{\Z^d}(y)\rangle}|$.
Therefore,
\begin{eqnarray*}
\|(L_{\rho_\theta} -L_{\rho_{\theta'}})v\|_{\infty}
&\leq& \| v \|_\infty
\left( \sum_{a \in \alpha'} + \sum_{a \in \alpha
\setminus \alpha'}\right)
e^{\varphi(y_a)}
|e^{\pm i\langle\theta,\psi^{\Z}(y_a)\rangle}-e^{\pm i\langle\theta',\psi^{\Z}(y_a)\rangle}|
\\
&\leq& \| v \|_\infty \left(
\sum_{a \in \alpha'}  e^{\varphi(y_a)}
|e^{\pm i\langle\theta,\psi^{\Z}(y_a)\rangle}-e^{\pm i\langle\theta',\psi^{\Z}(y_a)\rangle}|
 + 2\delta\right) \to 2\delta \| v \|_\infty.
\end{eqnarray*}
Since $\delta$ was arbitrary,
$ \|(L_{\rho_\theta} -L_{\rho_{\theta'}})v\|_{\infty}\to 0$.

Now for the $|\cdot |_\beta$ part of the norm,
since $\psi$ is constant on each $a\in\alpha$, $\psi(x)=\psi(y)$
for each $x,y\in\alpha$. Thus,
\begin{align*}
 |(L_{\rho_\theta} -L_{\rho_{\theta'}})v|_{\beta}
&\le |v|_\beta\sum_{ x, y\in a, a\in\alpha} e^{\varphi(x)}
|e^{\pm i\langle\theta,\psi^{\Z}(x)\rangle}-e^{\pm i\langle\theta',\psi^{\Z}(y)\rangle}| \\
&+|v|_\infty\sum_{ x, y\in a, a\in\alpha} |e^{\varphi(x)}-e^{\varphi(y)}|
|e^{\pm i\langle\theta,\psi^{\Z}(x)\rangle}-e^{\pm i\langle\theta',\psi^{\Z}(y)\rangle}|.
\end{align*}
Since by assumption, $|e^{\varphi(x)}-e^{\varphi(y)}|\le Cd_\beta(x,y) e^{\varphi(x)}$ and since $|e^{\pm i\langle\theta,\psi^{\Z}(x)\rangle}-e^{\pm i\langle\theta',\psi^{\Z}(y)\rangle}|\to 0$ as $\theta\to\theta'$, we
can copy the above computation to obtain that $|(L_{\rho_\theta} -L_{\rho_\theta'})|_{\beta}\to 0$ as $\theta\to\theta'$.
\end{proof}

 \subsection{Understanding $L_{\rho_\theta}$ at $\theta=0_d$}
 \label{subsec:thetazero}

Recall equation~\eqref{changebmult1111111111} and the definition of $L_{\rho_{0,\pm 1}}$ in~\eqref{eq:operatorchangebasis1}.
Evaluating at $\theta=0_d$, and taking $n=1$,

\begin{align*}
 (L_{\rho_0})_{euc}\begin{pmatrix}
                    v_1\\
                   v_2 
                  \end{pmatrix}
 = \frac12 \begin{pmatrix}
                   L_{\rho_{0,1}} + L_{\rho_{0,-1}}  &  L_{\rho_{0,1}} - L_{\rho_{0,-1}}  \\
                     L_{\rho_{0,1}} - L_{\rho_{0,-1}} &  L_{\rho_{0,1}} + L_{\rho_{0,-1}}
                  \end{pmatrix}_{euc}\begin{pmatrix}
                    v_1\\
                   v_2 
                  \end{pmatrix}.
\end{align*}
 
 We need to understand the operator $(L_{\rho_0} v)_{euc}$ along  with the individual operators $L_{\rho_{0,\pm 1}}$.
 At $\theta=0_d$, we understand the original Banach space
$\cB_\beta(X,\C^2)$ with the appropriate norm described at the beginning of Section~\ref{section:spectral properties} as a sum of two Banach spaces
\begin{align}\label{eq:isombanachsp}
 \cB_\beta(X,\C^2)\cong\cB_\beta(X,\C)\oplus \cB_\beta(X,\C),
\end{align}
where $\cB_\beta(X,\C)$ is the Banach space of functions $f:X\to\C$,
with  a similar norm. That is, $D_af=\sup_{x,x'\in a,\,x\neq x'}|f(x)-f(x')|/d_\beta(x,x'),\qquad |f|_\beta=\sup_{a\in\{a\}}D_a f$.
The space $\cB_\beta(X,\C)\subset L^\infty(\mu)$ consists of the functions
$f:X\to\C$ such that
$\|f\|_{\cB_\beta(X,\C)}=\|f\|_{\infty(\mu)}+|f|_\beta<\infty$.

To make sense of~\eqref{eq:isombanachsp}, we define a linear
isomorphism 
\begin{align}\label{tildePhi}
\tilde{\Phi}: \cB_\beta(X,\C^2)\to \cB_\beta(X,\C)\oplus\cB_\beta(X,\C),\quad  \tilde{\Phi}v= \begin{pmatrix}
                    v_1\\
                   v_2
                  \end{pmatrix},
\end{align}
for $v\in \cB_\beta(X,\C^2)$, $v_1, v_2\in\cB_\beta(X,\C)$, with inverse $\tilde{\Phi}^{-1}(v_1,v_2)^T=v$.

In other words, we work with the operator
\begin{align}\label{eq:lroeuc}
 (L_{\rho_0})_{euc}=\frac12 \begin{pmatrix}
                   L_{\rho_{0,1}} + L_{\rho_{0,-1}} &  L_{\rho_{0,1}} - L_{\rho_{0,-1}} \\
                     L_{\rho_{0,1}} - L_{\rho_{0,-1}} &  L_{\rho_{0,1}} + L_{\rho_{0,-1}}
                  \end{pmatrix}_{euc}:\cB_\beta(X,\C)\oplus \cB_\beta(X,\C)\to \cB_\beta(X,\C)\oplus \cB_\beta(X,\C) 
\end{align}
and understand the operator $L_{\rho_0}: \cB_\beta(X,\C^2) \to \cB_\beta(X,\C^2)$
as $L_{\rho_0}= \tilde{\Phi}^{-1}(L_{\rho_0})_{euc}\tilde{\Phi}$.\\

\textbf{Understanding of the operators $L_{\rho_{0,\pm 1}}:\cB_\beta(X,\C)\to \cB_\beta(X,\C)$.}
 
 The first part
of~\eqref{eq:assS}, together with the definition of $\varphi$ and the invariance of the measure $\mu$, imply that
\begin{equation}\label{eq:phi}
\varphi\circ S  = \frac{d\mu}{d\mu \circ T} \circ S = \frac{d\mu \circ S}{d\mu \circ T \circ S} = \frac{d\mu}{d\mu \circ T} = \varphi.
\end{equation}
Since $S$ bijectively exchanges $\{ y \in T^{-1}(x) : \eps(y) = 1\}$ with
$\{ y \in T^{-1}(x) : \eps(y) = -1\}$, we have
$$
 L_{\rho_{0, 1}}f(x)= \sum_{y \in T^{-1}(x)} e^{\varphi(y)}f(y)
= \sum_{y \in T^{-1}(x),\, \eps(y) = 1} e^{\varphi(y)}(f(y)+f(Sy)),
$$
and
$$
L_{\rho_{0, -1}}f(x) =\sum_{y \in T^{-1}(x)} e^{\varphi(y)}\eps(y) f(y)
=  \sum_{y \in T^{-1}(x),\, \eps(y) = 1} e^{\varphi(y)}(f(y)-f(Sy)).
$$

\textbf{The eigenvalue $1$ is simple and isolated in the spectrum of $L_{\rho_{0, 1}}$.}
Denote by $1$ the fixed point of the usual transfer operator  $Lf(x)= \sum_{y \in T^{-1}(x)} e^{\varphi(y)} f(y)=L_{\rho_{0, 1}}f(x)$ acting on $\cB_\beta(X,\C)$. By standard arguments (see, for instance, \cite{AD01}), $L1=1$.

Note that $L_{\rho_{0, 1}}1=2\sum_{y \in T^{-1}(x),\, \eps(y) = 1} e^{\varphi(y)}
= \sum_{y \in T^{-1}(x)} e^{\varphi(y)} =L1=1$. 
 So, 
$L_{\rho_{0, 1}}=L$ has the leading value $1$, isolated in the spectrum
of $L_{\rho_0}=L$.\\

\textbf{Spectral radius for $L_{\rho_{0, -1}}$.}
Note that $L_{\rho_{0, -1}}f = L(\eps f)$. 
We argue that the spectral radius $r(L_{\rho_{0, -1}})$ of $L_{\rho_{0, -1}}$ is less than $1$ using assumption~\eqref{apsign}. 

As we argue below, the following assumption on $\eps:X\to\{\pm 1\}$ is automatically satisfied in the set up of Gibbs Markov maps:

 \begin{align}\label{apsign}\nonumber\text{There exists no measurable funtion }  & u:X\to\S^1
 \text{ and } \gamma\in\S^1\text{ with } |\gamma|=1 \text{ so that }\\
  &\eps(y)=\gamma \frac{u(Ty)}{u(y)},\quad \mu\text{-a.e. } y.
 \end{align}

\begin{lemma}\label{lemma:eps}
 Assume~\eqref{apsign}. Then $r(L_{\rho_{0, -1}})<1$.
\end{lemma}

\begin{proof}
 Suppose that $r(L_{\rho_{0, -1}})=1$. Since $L_{\rho_{0, -1}}f = L(\eps f)$ and the operator $L(\eps\cdot)$ is quasi-compact (that is the DF inequality holds in $\cB_\beta(X,\C)$), there exists $f:X\to\C$, $f\ne 0$ and $\gamma\in\S^1$, $|\gamma|=1$ so that
 \[
  L(\eps f)=\gamma f.
 \]
Taking absolute values 
\[
 |f|=|L(\eps f)|\le L(|\eps f|)=L(|f|).
\]
Integrating,
\[
 \int_X |f|\, d\mu\le \int_X L(|f|)\, d\mu=\int_X |f|\, d\mu.
\]
So, $\int_X L(|f|)\, d\mu=\int_X |f|\, d\mu$. Since $\int_X L(|f|)-|f|\, d\mu=0$ and $L(|f|)-|f|\ge 0$, we obtain that 
$L(|f|)=|f|$, $\mu$-a.e. 

We already know that $1$ is a simple  eigenvalue for $L$. The associated eigenspace is generated by the (constant) function $1$. That is, every fixed point of $L$ is a constant, $\mu$-a.e. Since $L(|f|)=|f|$, $\mu$-a.e.\ and $|f|>0$ (because $f\ne 0$), $|f|=c$.

Set $u=\frac{f}{|f|}$, so $|u|=1$, and $f= uc$.
Since $|L(\eps f)|=|f|= L(|f|)$, 
$
c |L(\eps u)|=|f|= c L 1$, so
\[
 |L(\eps u)|=L 1.
\]
Thus,
\begin{align}\label{equalityeps}
 \left|\sum_{y \in T^{-1}(x)} e^{\varphi(y)} \eps(y) u(y)\right|= \sum_{y \in T^{-1}(x)} e^{\varphi(y)} =\sum_{y \in T^{-1}(x)} e^{\varphi(y)}|\eps(y) u(y)|,
\end{align}
where in the last equality we have used that $|\eps(y) u(y)|=1$.
We claim that~\eqref{equalityeps}
implies that
\begin{align}\label{indepy}
 \eps(y) u(y)=\eta(x),\text{ for all } y=T^{-1}x\text { and for }\eta\in\S^1.
\end{align}
To see this claim, suppose by contradiction that there exist $y,y'\in T^{-1}(x)$
so that $Z(y)=:\eps(y) u(y)\ne \eps(y') u(y'):=Z(y')$. 
Since $Z(y), Z(y')\in\S^1$, $\Re(Z(y)\overline{Z(y')})\le 1$ with equality if and only if $Z(y)=Z(y')$.
Squaring equation~\eqref{equalityeps}, expanding the square and taking real parts,
\begin{eqnarray*}
\left|\sum_{y \in T^{-1}(x)} e^{\varphi(y)} \eps(y) u(y)\right|^2
&=& \sum_{y, y' \in T^{-1}(x)}e^{\varphi(y')} e^{\varphi(y)}  Z(y)\overline{Z(y')}\\
&=& \sum_{y \in T^{-1}(x)} e^{2\varphi(y)}+\sum_{\substack{y, y' \in T^{-1}(x)\\y\neq y'}}e^{\varphi(y')} e^{\varphi(y)} \Re( Z(y)\overline{Z(y')}).
\end{eqnarray*}
If $Z(y)\ne Z(y'))$ then $\Re(Z(y)\overline{Z(y')})<1$, violating
the equality in~\eqref{equalityeps}. 

We use~\eqref{indepy} to conclude. Since $L(\eps u)(x)=\gamma u(x) $ (an immediate consequence of $L(\eps f)=\gamma f$)
and $\eps(y) u(y)=\eta(x)$, we have $\eta(x)\sum_{y \in T^{-1}(x)} e^{\varphi(y)}=\eta(x)=\gamma u(x)=\gamma u(Ty)$.
But $\eps(y) u(y)=\gamma u(Ty)$ is not possible due to equality \eqref{apsign}. So, the initial assumption $r(L_{\rho_{0, -1}})=1$ cannot hold.
\end{proof}

As already mentioned, the coboundary condition~\eqref{apsign}
holds in the set of Gibbs Markov maps. This is because of Liv\u{s}ic regularity for Markov maps.
 By Liv\u{s}ic regularity for Markov systems, any measurable $\S^1$
-valued solution admits a H\"older (piecewise H\"older in the Gibbs–Markov setting) representative; hence the cohomological equation can be evaluated on periodic points: see, for instance,~\cite[Theorem 1]{bnt}. More precisely, it follows from~\cite[Theorem 1]{bnt} that $u$ has a piecewise H\"older representative. Now, take a fixed point $p_{T_a}$ of each inverse branch $T_a$. Plugging in~\eqref{apsign}, we get that
$\eps(p_{T_a})=\gamma$.  But this means that $\eps$ is globally constant, which is a contradiction to $\eps=\pm 1$.\\

\textbf{\textbf{The eigenvalue $1$ is simple and isolated in the spectrum of $L_{\rho_{0}}$ as an operator on $\cB_\beta(X,\C^2)$.}}

Recall that $L_{\rho_0}: \cB_\beta(X,\C^2) \to \cB_\beta(X,\C^2)$, where
$L_{\rho_0}= \tilde{\Phi}^{-1}(L_{\rho_0})_{euc}\tilde{\Phi}$
with $(L_{\rho_0})_{euc}$ given in~\eqref{eq:lroeuc}.

A look at the entries of the matrix in~\eqref{eq:lroeuc}, tells us that the spectrum of $L_{\rho_0}$ is given by the union of the spectrum of $L_{\rho_{0, 1}}$ and $L_{\rho_{0,-1}}$.
Recall that $1$ is an isolated in the spectrum of $L_{\rho_{0, 1}}=L$.  By Lemma~\ref{lemma:eps}, $r(L_{\rho_{0, -1}})<1$. 
Thus, $1$ is also a simple, isolated eigenvalue in
the spectrum of of $L_{\rho_{0}}$ as an operator on $\cB_\beta(X,\C^2)$.\\

 \textbf{Spectral projection for $(L_{\rho_{0}})_{euc}$}

 The operator $L_{\rho_{0, 1}}=L$ has $1$ as a leading value, isolated in its spectrum. So, $L_{\rho_{0, 1}}=L$ has a corresponding spectral projection $\Pi_{\rho_{0, 1}} f=(\int_X  f\, d\mu) 1$ (associated with the eigenvalue $1$). In particular, $\Pi_{\rho_{0, 1}} 1= 1$.

Since the spectrum
of $L_{\rho_{0, -1}}$ is strictly inside the unit circle,
the projection $\Pi_{\rho_{0, -1}}$ associated with the part of the spectrum near $1$, is simply $0$, that is $\Pi_{\rho_{0, -1}}\equiv 0$.

  The spectral projection for $(L_{\rho_{0}})_{euc}$ can be written as
 
\begin{align*}
 (\Pi_{\rho_0})_{euc}=\frac12 \begin{pmatrix}
                   \Pi_{\rho_{0,1}} + \Pi_{\rho_{0,-1}} &  \Pi_{\rho_{0,1}} - \Pi_{\rho_{0,-1}} \\
                     \Pi_{\rho_{0,1}} - \Pi_{\rho_{0,-1}} &  \Pi_{\rho_{0,1}} + \Pi_{\rho_{0,-1}}
                  \end{pmatrix}_{euc}:\cB_\beta(X,\C)\oplus \cB_\beta(X,\C)\to \cB_\beta(X,\C)\oplus \cB_\beta(X,\C).
\end{align*}
acting on $v=(v_1, v_2)^T\in \cB_\beta(X,\C)\oplus \cB_\beta(X,\C)$.
Applying $(\Pi_{\rho_0})_{euc}$ to $v$ and recalling that
we get that $\Pi_{\rho_{0,-1}}\equiv 0$,

 \begin{align}\label{projectionzeroeuc}
 (\Pi_{\rho_0})_{euc}\begin{pmatrix}
                    v_1\\
                   v_2
                  \end{pmatrix}=\frac12 \begin{pmatrix}
                   \Pi_{\rho_{0,1}}  &  \Pi_{\rho_{0,1}}  \\
                     \Pi_{\rho_{0,1}} & \Pi_{\rho_{0,1}} 
                  \end{pmatrix}_{euc}\begin{pmatrix}
                    v_1\\
                   v_2
                  \end{pmatrix}
\end{align}

If $v=(1,1)^T$,

\begin{align}\label{projectionzeroeuc1}
 (\Pi_{\rho_0})_{euc}\begin{pmatrix}
                    1\\
                   1
                  \end{pmatrix}=\frac12 \begin{pmatrix}
                   1 &  1 \\
                     1 & 1
                  \end{pmatrix}_{euc}\begin{pmatrix}
                    1\\
                   1
                  \end{pmatrix}.
\end{align}

\subsection{Understanding $L_{\rho_\theta}$ for $\theta$ close to $0_d$}

Via the Banach space isomorphism $\tilde{\Phi}$ defined in~\eqref{tildePhi}, we identify $L_{\rho_\theta}$ with its euclidean matrix representation $(L_{\rho_\theta})_{euc}=\tilde\Phi\circ L_{\rho_\theta}\circ \tilde\Phi^{-1}$ on $\cB_\beta(X,\C)\oplus \cB_\beta(X,\C)$. Since $\tilde\Phi$ is independent of $\theta$, continuity of $L_{\rho_\theta}$ on $\cB(X,\C^2)$  is equivalent to continuity of $(L_{\rho_\theta})_{euc}$.

Lemma~\ref{lemma:Dcont} gives the continuity of $L_{\rho_\theta}$ as an operator on $\cB(X,\C^2)$ for all $\theta\in[0,2\pi)^d\setminus T_d$. The continuity of the operator $(L_{\rho_\theta})_{euc}$ on $\cB_\beta(X,\C)\oplus \cB_\beta(X,\C)$ for all $\theta\in[0,2\pi)^d\setminus T_d$ follows. Moreover, in Euclidean coordinates we can also talk about the continuity of $(L_{\rho_\theta})_{euc}$ at $\theta=0_d$, that is $\|(L_{\rho_\theta})_{euc}-(L_{\rho_0})_{euc}\|_{\cB_\beta(X,\C)\oplus \cB_\beta(X,\C)}\to 0$ as $\theta\to 0_d$. And transporting back with $\tilde{\Phi}$,
we have $\|L_{\rho_\theta}-L_{\rho_0}\|_{\cB_\beta(X,\C^2)}
=\|\tilde\Phi^{-1}((L_{\rho_\theta})_{euc}-(L_{\rho_0})_{euc})\tilde\Phi\|_{\cB_\beta(X,\C^2)}\to 0$. That is, the operator $L_{\rho_\theta}$ on $\cB_\beta(X,\C^2)$ is continuous in $\theta\in[0,2\pi)^d$.

From subsection~\ref{outsidetd}, we know that DF inequality    $\|L_{\rho_\theta}^n v\|_{\cB_\beta(X,\C^2)}\le \gamma_0^n |v|_{\beta}
+ C\|v\|_\infty$ holds for all $\theta\in[0,2\pi)^d\setminus T_d$.
In subsection~\ref{subsec:thetazero}
we have established that $1$ is a simple, isolated eigenvalue in
the spectrum of $L_{\rho_{0}}$ as an operator on $\cB_\beta(X,\C^2)$. We also know that $L_{\rho_\theta}$ as an operator on $\cB_\beta(X,\C^2)$ is continuous at $\theta=0_d$.
Putting all these facts together
and proceeding, as in, for instance, in~\cite[Proof of Theorem 4.1]{AD01},  allows us to write 
\begin{align}\label{eq:spDc0p}
 L_{\rho_\theta}^n v=\lambda_{\rho_\theta}^n\Pi_{\rho_\theta}v+Q_{\rho_\theta}^nv,\text{ for all }\theta\in B_{\delta}(0_d)=\{\theta\in [0,2\pi)^d:|\theta|\le\delta\}=[-\delta,\delta]^d,
\end{align}for some $\delta>0$. 
Here $\lambda_{\rho_\theta}$ is the dominant eigenvalue of $L_{\rho_\theta}$ and $\Pi_{\rho_\theta}Q_{\rho_\theta}=Q_{\rho_\theta}\Pi_{\rho_\theta}=0$ and $\|Q_{\rho_\theta}^n\|_{\cB_\beta(X,\C^2)}\le C\delta_0^n$ for some $\delta_0\in (0,1)$.
Since $L_{\rho_\theta}$ on $\cB_\beta(X,\C^2)$ is continuous in $\theta\in[0,2\pi)^d$, the same applies to $\Pi_{\rho_\theta}$
and $Q_{\rho_\theta}$.

We already know $(L_{\rho_\theta})_{euc}$ is continuous in $\theta\in [0,2\pi)^d$ on $\cB_\beta(X,\C)\oplus \cB_\beta(X,\C)$.
 The same applies to the spectral projection $(\Pi_{\rho_\theta})_{euc}=\tilde\Phi\circ \Pi_{\rho_\theta}\circ \tilde\Phi^{-1}$.
In particular, $\|(\Pi_{\rho_\theta})_{euc}-(\Pi_{\rho_0})_{euc}\|_{\cB_\beta(X,\C)\oplus \cB_\beta(X,\C)}\to 0$ as $\theta\to 0_d$.

Recall~\eqref{eq:spDc0p} and that $(L_{\rho_\theta})_{euc}=\tilde\Phi\circ L_{\rho_\theta}\circ \tilde\Phi^{-1}$ (and the same for the spectral projection $\Pi_{\rho_\theta}$ and $Q_{\rho_\theta}$).  Since $\|(\Pi_{\rho_\theta})_{euc}-(\Pi_{\rho_0})_{euc}\|_{\cB_\beta(X,\C)\oplus \cB_\beta(X,\C)}\to 0$ as $\theta\to 0_d$,
\begin{align}\label{finalspdec}
 (L_{\rho_\theta}^n)_{euc}v=
 \lambda_{\rho_\theta}^n(\Pi_{\rho_0})_{euc}v (1+o(1)) + \tilde\Phi^{-1}(Q_{\rho_\theta}^n v)
\end{align}
 for all $\theta\in B_{\delta}(0_d)=[-\delta,\delta]^d$,
for some $\delta>0$, where $\|\tilde\Phi\circ Q_{\rho_\theta}^n\circ \tilde\Phi^{-1}\|_{\cB_\beta(X,\C)\oplus \cB_\beta(X,\C)}\le C\delta_0^n$,
for some $\delta_0\in (0,1)$.

\subsection{Estimating the eigenvalue $\lambda_{\rho_\theta}$ }
\label{sec:bd}
                      
 We will do the calculations on $\cB(X,\C^2)$, as eigenvalues are not affected by changes of basis.
 Recalling~\eqref{eq:spDc0p} we know that  $\lambda_{\rho_\theta}$ is the leading eigenvalue of $L_{\rho_\theta}:\cB(X,\C^2)\to \cB(X,\C^2)$. 
 Let $\ell_0$ be a linear functional in the dual $\cB(X,\C^2)^*$ 
 so that $L_{\rho_0}^*\ell_0=\ell_0$ and so that $\ell_0(v_{\rho_0})=1$
 where $v_{\rho_0}=(1, 1)^T$ is the eigenvector associated with the eigenvalue
 $\lambda_{\rho_0}=1$. The eigenvector $v_{\rho_0}=(1, 1)^T$
 is  the (unique) invariant density in $\cB(X,\C^2)$.
 Note that $\ell_0$ is the the unique eigenfunctional in the dual space $\cB(X,\C^2)^*$ associated with the eigenvalue $1$.
 The functional $\ell_0$
 is the (unique) invariant measure in $\cB(X,\C^2)^*$.

 The functional $\ell_0$ act on $v$ by $\ell_0(v)=\langle\ell_0, v\rangle=\int_X \langle\xi_0, v\rangle d\mu $, where $\xi_0$ 
 can be determined precisely by
 solving $L_{\rho_0}^*\ell_0=\ell_0$
 and $\ell_0(v_{\rho_0})=1$, using that $\ell_0$ is the unique eigenfunctional satisfying this equation.

\subsubsection{Explicit form of $\xi_0$ as a constant function in $\C^2$}

We claim  $\xi_0=\frac12(1,1)^T$ is the unique function that satisfies 
$\ell_0(L_{\rho_0}v)=\ell_0(v)$.
Let $v=(v_1, v_2)^T$.

Note that
\begin{align*}
 \ell_0(L_{\rho_0}v)&=\int_X \langle\xi_0(x), L_{\rho_0}v(x)\rangle d\mu(x) =\int_X \langle\xi_0(x), \sum_{y\in T^{-1}x} e^{\varphi(y)}\rho_0\circ\psi(y)\, v(y)\rangle d\mu(x)\\
 &=\int_X \sum_{y\in T^{-1}x}e^{\varphi(y)}\langle \xi_0(x),\rho_0\circ\psi(y)\, v(y)\rangle d\mu(x),
\end{align*}
where in the last line we have used the linearity of $\langle \cdot,\cdot\rangle$.

Given $\xi_0=\frac12(1,1)^T$, we verify that $\ell_0(L_{\rho_0}v)=\ell_0(v)$.
Recalling that $\rho_0\circ\psi(y)$ either leaves the two coordinates unchanged or swaps them,
\begin{align*}
 \ell_0(L_{\rho_0}v)=\frac{1}{2}\int_X \sum_{y\in T^{-1}x}e^{\varphi(y)}(v_1(y)+ v_2(y))\,d\mu(x)
 & =\frac{1}{2}\int_X \sum_{y\in T^{-1}x}e^{\varphi(y)}v_1(y)\,d\mu(x)
 +\int_X \sum_{y\in T^{-1}x}e^{\varphi(y)}v_2(y)\, d\mu(x)\\
 &=\frac{1}{2}\int_X (v_1+v_2) d\mu.
\end{align*}
By definition, $\ell_0(v)=\frac{1}{2}\int_X (v_1+v_2) d\mu$.
So, $\ell_0(L_{\rho_0}v)=\ell_0(v)$.
Also, $1=\ell_0(v_{\rho_0})=\frac{1}{2}\int_X (1+1) d\mu$, which is what we required.

The uniqueness of $\xi_0=\frac12(1,1)^T$ follows from the uniqueness of the normalized eigenfunctional $\ell_0$ (in the dual) associated with the eigenvalue $1$.

 \subsubsection{Set up for the calculation of the eigenvalue $\lambda_{\rho_\theta}$}
 
 Start from $\lambda_{\rho_\theta}v_{\rho_\theta}=L_{\rho_\theta}v_{\rho_\theta}$,
where $v_{\rho_\theta}$ is the normalized eigenvector, that is, $
\ell_0(v_{\rho_\theta})=\int_X \langle\xi_0, v_{\rho_\theta}\rangle d\mu =1$.
Applying $\ell_0$ to $\lambda_{\rho_\theta}v_{\rho_\theta}=L_{\rho_\theta}v_{\rho_\theta}$, we get 
\begin{align}\label{eveq}
 \nonumber\lambda_{\rho_\theta}&=\ell_0(L_{\rho_\theta}v_{\rho_\theta})
 =\int_X\langle\xi_0, L_{\rho_\theta} v_{\rho_\theta}\rangle\, d\mu\\
 &=\int_X\langle\xi_0, L_{\rho_\theta} v_{\rho_0}\rangle\, d\mu
 +\int_X\langle\xi_0, (L_{\rho_\theta}-L_{\rho_0}) (v_{\rho_\theta}
 -v_{\rho_0})\rangle\, d\mu.
\end{align}

\textbf{The first term} in~\eqref{eveq} is computed as
\begin{align*}
 \int_X\langle\xi_0, L_{\rho_\theta} (1,1)^T\rangle\, d\mu
 &=\int_{\eps=1}\langle\xi_0, L_{\rho_\theta} (1,1)^T\rangle\, d\mu
 +\int_{\eps=-1}\langle\xi_0, L_{\rho_\theta} (1,1)^T\rangle\, d\mu\\
 &=\int_{\eps=1}\langle\xi_0, (e^{i\langle\theta,\psi^{\Z^d}\rangle}, e^{-i\langle\theta,\psi^{\Z^d}\rangle})^T\rangle\, d\mu
 +\int_{\eps=-1}\langle\xi_0, (e^{-i\langle\theta,\psi^{\Z^d}\rangle}, e^{i\langle\theta,\psi^{\Z^d}\rangle})^T\rangle\, d\mu.
\end{align*}
Recalling $\xi_0=\frac12(1,1)^T$,
\[
 \int_X\langle\xi_0, L_{\rho_\theta} (1,1)\rangle\, d\mu=\int_{\eps=1}\frac12
 (e^{i\langle\theta,\psi^{\Z^d}\rangle}+
 e^{-i\langle\theta,\psi^{\Z^d}\rangle})\, d\mu +\int_{\eps=-1}\frac12
 (e^{i\langle\theta,\psi^{\Z^d}\rangle}+
 e^{-i\langle\theta,\psi^{\Z^d}\rangle})\, d\mu.
\]

Recalling that $\mu(\eps=1)=\mu(\eps=-1)=1/2$, that $\psi^{\Z^d}\in L^{2+\delta^*}(\mu)$ and taking expansions in $\theta$,
we get
\begin{align}\label{firstterm}
 \int_X\langle\xi_0, L_{\rho_\theta} (1,1)^T\rangle\, d\mu= 1-\frac 12 \int_X\langle\theta,\psi^{\Z^d}\rangle^2\, d\mu +O(|\theta|^{2+\delta^*}).
\end{align}

\textbf{For the second  term} in~\eqref{eveq} we need to go to derivatives.

The derivative of $L_{\rho_\theta}$ is well defined in $\cB_\beta(X,\C^2)$.
Recalling the pointwise formula $L_{\rho_\theta}$ (see beginning of
Section~\ref{s:inv}) and that $v= (v_1, v_2)$,
\begin{align*}L_{\rho_\theta} v(x) &=\sum_{y\in T^{-1}x, \eps=1} e^{\varphi(y)}
\begin{pmatrix} 
e^{i\langle\theta,\psi^{\Z^d}(y)\rangle}v_1(y) \\ 
 e^{-i\langle\theta,\psi^{\Z^d}(y)\rangle}v_2(y)
\end{pmatrix}
+\sum_{y\in T^{-1}x,\eps=-1}e^{\varphi(y)}
\begin{pmatrix} 
e^{i\langle\theta,\psi^{\Z^d}(y)\rangle}v_1(y) \\ 
 e^{-i\langle\theta,\psi^{\Z^d}(y)\rangle}v_2(y)
\end{pmatrix}.
\end{align*}

One convenient way to write the derivative in $\theta$ is via the gradient
$\nabla_\theta e^{i\langle\theta,\psi^{\Z^d}\rangle}= i\psi^{\Z^d}e^{i\langle\theta,\psi^{\Z^d}\rangle}$, which can be written coordinate-wise.
Taking a derivative in $\theta$ of $L_{\rho_\theta}$, which we write
as $L_{\rho_\theta}':= \nabla_\theta L_{\rho_\theta} $,
we get 
\begin{align*}(L_{\rho_\theta})' v(x) =
\sum_{y\in T^{-1}x,\eps=1}e^{\varphi(y)}\begin{pmatrix} 
i\psi^{\Z^d}\cdot e^{i\langle\theta,\psi^{\Z^d}(y)\rangle}v_1(y) \\ 
 -i\psi^{\Z^d}\cdot e^{-i\langle\theta\psi^{\Z^d}(y)\rangle}v_2(y)
\end{pmatrix}
+\sum_{y\in T^{-1}x,\eps=-1}e^{\varphi(y)}
\begin{pmatrix} 
i\psi^{\Z^d}\cdot e^{i\langle\theta,\psi^{\Z^d}(y)\rangle}v_1(y) \\ 
-i\psi^{\Z^d}\cdot e^{-i\langle\theta\psi^{\Z^d}(y)\rangle}v_2(y)
\end{pmatrix}.
\end{align*}
Recalling that $\psi^{\Z^d}$ is constant
on partition elements and $\psi^{\Z^d}\in L^{1}(\mu)$
along with the definition of $L^\infty(\mu)$ norm in $\cB_\beta(X,\C^2)$,
\begin{align*}
 \left\|(L_{\rho_\theta})' v\right\|_{\infty}
 \ll\|v\|_\infty\sum_{a\in\alpha}\mu(a)\left|\psi^{\Z^d}|_{a}\right|
\ll
 \|\psi^{\Z^d}\|_{L^1(\mu)}\|v\|_{\infty},
\end{align*}where $\left|\psi^{\Z^d}|_{a}\right|$ is the norm on $\Z^d$ of $\psi^{\Z^d}|_{a}$.
Again using that $\psi^{\Z^d}$ is constant on $a\in\alpha$ and the Gibbs property of $\varphi$ (a simplified version of~\cite[Proposition 12.1]{MelTer17}) and recalling the definition of $|\cdot|_\beta$ seminorm 
in $\cB_\beta(X,\C^2)$ gives
$
 \left|(L_{\rho_\theta})' v\right|_{\beta}
 \ll
 \|\psi^{\Z^d}\|_{L^1(\mu)}\|v\|_{\cB_\beta(X,\C^2)}$. So,
 $
 \left\|(L_{\rho_\theta})' v\right\|_{\cB_\beta(X,\C^2)}
 \ll
 \|\psi^{\Z^d}\|_{L^1(\mu)}\|v\|_{\cB_\beta(X,\C^2)}$.

It follows that the first derivative $(L_{\rho_\theta})'$ evaluated at $0_d$ is well defined in norm. 
 A  calculation using that $\psi^{\Z^d}\in L^{2}(\mu)$ shows that
\begin{align}\label{derivopc2}
 \left\|(L_{\rho_\theta}- L_{\rho_\theta}- (L_{\rho_0})'\theta\right)v \|_{\cB_\beta(X,\C^2)}
\ll |\theta|^{2}\|\psi^{\Z^d}\|_{L^2(\mu)}^2\|v\|_{\cB_\beta(X,\C^2)}\ll|\theta|^2.
\end{align}
 We can also write $(L_{\rho_0})'\theta$
  as 
\begin{align}\label{difwaywritede1}
 [(L_{\rho_0})'\theta] v=L_{\rho_0}\left( (i\langle\theta,\psi^{\Z^d}\rangle \begin{pmatrix}
v_1 \\
- v_2
\end{pmatrix}\right)
\end{align}
and recalling the integral formula for $L_{\rho_\theta}$ (see beginning of
Section~\ref{s:inv}),
\begin{align}\label{difwaywriteder}
\int_X  [(L_{\rho_0})'\theta] v d\mu=
\int_X i\langle\theta,\psi^{\Z^d}\rangle \begin{pmatrix}
v_1 \\
- v_2
\end{pmatrix}\, d\mu.
\end{align}

The same holds for the corresponding eigen-elements, in particular, for the eigenvector $v_{\rho_\theta}=\frac{\Pi_{\rho_\theta} v_{\rho_0}}{\int_X\Pi_{\rho_\theta} v_{\rho_0}d\mu}$.
\begin{align}\label{eq:v1pr0}
 \left\|v_{\rho_\theta}- v_{\rho_0}- (v_{\rho_0})'\theta \right\|_{\cB_\beta(X,\C^2)}\ll |\theta|^{2}.
\end{align}

With the derivatives clarified, we can proceed to the computation of the second term in~\eqref{eveq}. Using~\eqref{derivopc2} and~\eqref{eq:v1pr0}

\begin{align*}
 \int_X\langle\xi_0, & (L_{\rho_\theta}-L_{\rho_0}) (v_{\rho_\theta}
 -v_{\rho_0})\rangle\, d\mu
 =\int_X\langle\xi_0, (L_{\rho_0})'\theta (v_{\rho_\theta}
 -v_{\rho_0})\rangle\, d\mu\\
 & +\int_X\langle\xi_0, (L_{\rho_\theta}-L_{\rho_0}-(L_{\rho_0})'\theta) (v_{\rho_\theta}
 -v_{\rho_0})\rangle\, d\mu \\
 &=\int_X\langle\xi_0, (L_{\rho_0})'\theta\, (v_{\rho_0})')\theta\rangle\, d\mu + \int_X\langle\xi_0, \theta(L_{\rho_0})' (v_{\rho_\theta}
 -v_{\rho_0}-\theta(v_{\rho_0})')\rangle\, d\mu+ O(|\theta|^3)\\
 &=\int_X\langle\xi_0, (L_{\rho_0})'\theta\, (v_{\rho_0})')\theta\rangle\, d\mu +O(|\theta|^3).
\end{align*}

Differentiating in $\theta$ in $v_{\rho_\theta}=\frac{\Pi_{\rho_\theta} v_{\rho_0}}{\int_X\Pi_{\rho_\theta} v_{\rho_0}d\mu}$
and evaluating at $0$, $(v_{\rho_0})'=(\Pi_{\rho_0})' v_{\rho_0}$.

Recall $v_{\rho_0}=\begin{pmatrix}
 1 \\
1
 \end{pmatrix}$.
Using the Cauchy formula and recalling~\eqref{difwaywritede1},
\begin{align*}
 (\Pi_{\rho_0})'v_{\rho_0}&=\frac{1}{2\pi i}
\int_{|u-1|= \delta} (u- L_{\rho_0})^{-1}(L_{\rho_0})'(u- L_{\rho_0})^{-1}v_{\rho_0}du\\
&= \frac{1}{2\pi i}\int_{|u-1|= \delta}(u-1)^{-1} (u- L_{\rho_0})^{-1}L_{\rho_0}\left( (i\langle\theta,\psi^{\Z^d}\rangle \begin{pmatrix}
 1 \\
-1
 \end{pmatrix}\right)du.
\end{align*}

Let $\widetilde \psi^{\Z^d}= \psi^{\Z^d}-\int_X \psi^{\Z^d}\, d\mu$
and note that 
$$L_{\rho_0}\left( (i\langle\theta,\psi^{\Z^d}\rangle \begin{pmatrix}
 1 \\
-1
 \end{pmatrix}\right)=i\langle\theta,\int_X \psi^{\Z^d}\, d\mu\rangle\begin{pmatrix}
 1 \\
-1
 \end{pmatrix} +L_{\rho_0} \left(i\langle\theta,\widetilde \psi^{\Z^d}\rangle\begin{pmatrix}
 1 \\
-1
 \end{pmatrix}\right).$$ Note the first
term $i\langle\theta,\int_X \psi^{\Z^d}\, d\mu\rangle$ is a constant. Cauchy's theorem applied to the constant term/vector gives $0$ inside the formula for $(\Pi_{\rho_0})'v_{\rho_0}$.
Since $\widetilde \psi^{\Z^d}$ has mean zero, $L_{\rho_0} \left(i\langle\theta,\widetilde \psi^{\Z^d}\rangle\begin{pmatrix}
 1 \\
-1
 \end{pmatrix}\right)$ lies in complementary spectral subspace.
 That is,
 \begin{align*}
  \frac{1}{2\pi i}\int_{|u-1|= \delta}(u-1)^{-1} (u- L_{\rho_0})^{-1}L_{\rho_0}\left( (i\langle\theta,\widetilde\psi^{\Z^d}\rangle \begin{pmatrix}
 1 \\
-1
 \end{pmatrix}\right)du
 &=(I- L_{\rho_0})^{-1}(I-\Pi_{\rho_0})L_{\rho_0}\left( (i\langle\theta,\widetilde\psi^{\Z^d}\rangle \begin{pmatrix}
 1 \\
-1
 \end{pmatrix}\right)\\
 &=(I- L_{\rho_0})^{-1}L_{\rho_0}\left( (i\langle\theta,\widetilde\psi^{\Z^d}\rangle \begin{pmatrix}
 1 \\
-1
 \end{pmatrix}\right).
 \end{align*}
Therefore,
\begin{align*}
 (v_{\rho_0})'=(\Pi_{\rho_0})'v_{\rho_0}=\sum_{n\ge 1}L_{\rho_0}^n \left(i\langle\theta,\widetilde \psi^{\Z^d}\rangle \begin{pmatrix}
1 \\
-1
\end{pmatrix}\right).
\end{align*}
This together with~\eqref{difwaywriteder} gives
\begin{align*}
  \int_X\langle\xi_0, (L_{\rho_0})'\theta\, (v_{\rho_0})')\theta\rangle\, d\mu
  &=\sum_{n\ge 1} \int_X\left\langle\xi_0,i\langle\theta,\psi^{\Z^d}\rangle \begin{pmatrix}
 1 \\
-1
 \end{pmatrix}\, L_{\rho_0}^n \left(i\langle\theta,\widetilde \psi^{\Z^d}\rangle \begin{pmatrix}
1 \\
 -1
 \end{pmatrix} \right)\right\rangle\, d\mu\\
 &= \sum_{n\ge 1} \int_X \left\langle\xi_0,
 \begin{pmatrix}
 i\langle\theta,\widetilde\psi^{\Z^d}\rangle \\
 -i\langle\theta,\widetilde\psi^{\Z^d}\rangle
  \end{pmatrix}\,  \begin{pmatrix}
 i\langle\theta,\widetilde \psi^{\Z^d}\rangle \\
 -i\langle\theta,\widetilde \psi^{\Z^d}\rangle
 \end{pmatrix}\circ T^n \right\rangle\, d\mu.
 \end{align*}
Recalling $\xi_0=\frac12\binom{1}{1}$,
\begin{align*}
 \int_X\langle\xi_0, (L_{\rho_0})'\theta\, (v_{\rho_0})')\theta\rangle\, d\mu&
  =-\sum_{n\ge 1} \int_X \langle\theta,\widetilde \psi^{\Z^d}\rangle\cdot \langle\theta,\widetilde \psi^{\Z^d}\rangle\circ T^n d\mu.
\end{align*}

So,
\begin{align*}
 \int_X\langle\xi_0, (L_{\rho_\theta}-L_{\rho_0}) (v_{\rho_\theta}
 -v_{\rho_0})\rangle\, d\mu
 =-\sum_{n\ge 1} \int_X \langle\theta,\widetilde \psi^{\Z^d}\rangle\cdot \langle\theta,\widetilde \psi^{\Z^d}\rangle\circ T^n d\mu + O(|\theta|^3).
\end{align*}

Putting together the first and the second term and recalling~\eqref{eveq},
\begin{align*}
 \lambda_{\rho_\theta}= 1+\frac 12 \int_X\langle\theta,\psi^{\Z^d}\rangle^2\, d\mu-\sum_{n\ge 1} \int_X \langle\theta,\widetilde \psi^{\Z^d}\rangle\cdot \langle\theta,\widetilde \psi^{\Z^d}\rangle\circ T^n d\mu  +O(\theta|^{2+\delta^*}).
\end{align*}
and rewriting in terms of a covariance matrix,
\begin{align}\label{eigenvformula}
 \lambda_{\rho_\theta}= 1-\frac 12 \theta^T \Sigma_1 \theta +o(\theta|^2),
\end{align}
where $\Sigma_1=\int_X \psi^{\Z^d}\, (\psi^{\Z^d})^T\, d\mu+
2 \sum_{n\ge 1} \int_X \widetilde\psi^{\Z^d}\, (\widetilde\psi^{\Z^d})^T\circ T^n\, d\mu$ with $\widetilde \psi^{\Z^d}= \psi^{\Z^d}-\int_X \psi^{\Z^d}\, d\mu$.
 
 The assumption $\psi^{\Z^d}\in L^{2+\delta^*}(\mu)$, so $\widetilde\psi^{\Z^d}\in L^{2+\delta^*}(\mu)$,  is necessary for the sum $\sum_{n\ge 1} \int_X \widetilde\psi^{\Z^d}\, (\widetilde\psi^{\Z^d})^T\circ T^n\, d\mu$ to be convergent.  
 Write $(\widetilde\psi^{\Z^d})_r (\widetilde\psi^{\Z^d}\circ T^j)_s$ for the entries of the matrix.
Since $\widetilde\psi^{\Z^d}$ is locally constant (so piecewise H\"older) and $\psi\in L^{2+\delta^*}$, one shows that for all $r,s$, $\int_X
 (\widetilde\psi^{\Z^d})_r (\widetilde\psi^{\Z^d}\circ T^j)_s\, d\mu$ is decaying exponentially, so the sum $\sum_{j\ge 1}\int 
 (\widetilde\psi^{\Z^d})_r (\widetilde\psi^{\Z^d}\circ T^j)_s\, d\mu$ converges for all $r,s$. This a classical argument: truncate $\widetilde\psi^{\Z^d}$ to make it bounded, apply exponential decay of correlation to the bounded part, control the tail terms using that $\widetilde\psi^{\Z^d}\in L^{2+\delta^*}$ and balance the truncation level with the exponential decay.

\subsection{Final form of $L_{\rho_\theta}^n$ for $\theta\in B_\delta(0_d)=[-\delta,\delta]^d$}
 
Recall~\eqref{finalspdec} and using~\eqref{eigenvformula},
\begin{align}\label{useininv}
 (L_{\rho_\theta}^n)_{euc} v=
   e^{-\frac{n}{2}\theta^T\Sigma_1\theta (1+o(1)} (\Pi_{\rho_0})_{euc}v (1+o(1)) +O(\delta_0^n)
\end{align}
with $(\Pi_{\rho_0})_{euc}v$ given in~\eqref{projectionzeroeuc}.

\section{Reducing to a neighborhood of $0_d$}
\label{sec:red0}

\subsection{Understanding $L_{\rho_\theta}$ for $\theta\in T_d\setminus\{0_d\}$ with $T_d$ as defined in~\eqref{eq:td}}
\label{susebc:pi}

Recall from subsection~\ref{subsec:thetazero} that
 for $\theta\in T_d$, we can write $L_{\rho_\theta} v=L_{\rho_{\theta,1}}v^{+}\oplus L_{\rho_{\theta,-1}}v^{-}$,
 where
 \begin{align}\label{eq:operatorchangebasis}
 \begin{cases}
  L_{\rho_{\theta,1}}v^{+}(x) = \sum_{y \in T^{-1}(x)} e^{\varphi(y)} e^{i\langle \theta,\psi^{\Z^d}(y)\rangle} v^{+}(y), \\[2mm]
  L_{\rho_{\theta,-1}} v^{-}(x)= \sum_{y \in T^{-1}(x)} e^{\varphi(y)} e^{i\langle \theta,\psi^{\Z^d}(y)\rangle} \eps(y)v^{-}(y) .
 \end{cases}
\end{align}
We recall that $v=(v_1, v_2)^T$ and $(v^{+}(x),v^{-}(x))^T=(\frac{v_1(x)+v_2(x)}{\sqrt 2}, \frac{v_1(x)-v_2(x)}{\sqrt 2})$ are the coordinates of $v$ in the diagonal-antidiagonal directions.

\begin{lemma}\label{lemma:noev0}Let $\theta\in T_d\setminus\{0_d\}$.
 The operators $L_{\rho_{\theta,1}}$
 and $L_{\rho_{\theta,-1}}$ have no eigenfunction in $\cB_\beta(X,\C)$ on the unit circle.
\end{lemma}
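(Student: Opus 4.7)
The plan is to run the standard Aaronson--Denker/Guivarc'h type argument for twisted transfer operators, reducing the existence of a unit-modulus eigenvalue to a cohomological equation for $\psi$ ruled out by the aperiodicity condition \eqref{eq:apdinf}. Fix $\theta\in T_d\setminus\{0_d\}$ and suppose, for contradiction, that $v\in\cB_\beta$, $v\not\equiv 0$, satisfies $L_{\rho_{\theta,\eta}}v=\lambda v$ for some $\eta\in\{\pm 1\}$ and some $\lambda$ with $|\lambda|=1$.

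First I would show $|v|$ is constant. From the pointwise estimate $|L_{\rho_{\theta,\eta}}v(x)|\le L|v|(x)$ and $|L_{\rho_{\theta,\eta}}v|=|\lambda||v|=|v|$, we get $|v|\le L|v|$. Since $\int|v|\,d\mu=\int L|v|\,d\mu$, equality holds $\mu$-a.e., so $|v|$ is a fixed point of $L$. The spectral gap of $L$ on $\cB_\beta$ (with simple leading eigenfunction $\mathbf 1$) together with ergodicity of $T$ forces $|v|$ to be a constant, which we normalize to $|v|\equiv 1$.

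Next I would extract a pointwise coboundary equation. Equality in $|\sum_{y\in T^{-1}x} e^{\varphi(y)}\,e^{i\langle\theta,\psi^{\Z^d}(y)\rangle}\varepsilon(y)^{(1-\eta)/2}v(y)|=\sum_{y\in T^{-1}x}e^{\varphi(y)}|v(y)|$ forces all summands to have a common argument, whence for $\mu$-a.e.\ $y$,
\begin{equation*}
e^{i\langle\theta,\psi^{\Z^d}(y)\rangle}\,\varepsilon(y)^{(1-\eta)/2}\,v(y)=\lambda\,v(Ty).
\end{equation*}
The critical observation is that, because $\theta\in T_d$, the function $m\mapsto e^{i\langle\theta,m\rangle}$ is $\{\pm 1\}$-valued and satisfies $e^{i\langle\theta,-m\rangle}=e^{i\langle\theta,m\rangle}$. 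Consequently, although the $\Z^d$-coordinate $M_n(y)$ of $\psi_n(y)$ is obtained via the semidirect-product twist and is \emph{not} the Birkhoff sum $\sum_{k=0}^{n-1}\psi^{\Z^d}(T^k y)$, we still have $e^{i\langle\theta,M_n(y)\rangle}=\prod_{k=0}^{n-1}e^{i\langle\theta,\psi^{\Z^d}(T^k y)\rangle}$. Iterating the one-step equation therefore yields, for $\mu$-a.e.\ $y$ and every $n\ge 1$,
\begin{equation*}
\rho_{\theta,\eta}(\psi_n(y))=e^{i\langle\theta,M_n(y)\rangle}\,E_n(y)^{(1-\eta)/2}=\lambda^n\,\frac{v(T^n y)}{v(y)},
\end{equation*}
where $E_n(y)=\prod_{k=0}^{n-1}\varepsilon(T^k y)$ is the $\Z/2\Z$-component of $\psi_n(y)$.

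Finally I would invoke aperiodicity to close the argument. The displayed identity says exactly that the nontrivial $1$-dimensional character $\rho_{\theta,\eta}$ of $G_d$ (with $\theta\in T_d\setminus\{0_d\}$, $\eta\in\{\pm1\}$) evaluated along the cocycle $\psi$ is a measurable coboundary up to a constant of modulus one. The aperiodicity condition \eqref{eq:apdinf} on $\psi$ is precisely designed to rule this out: it is the direct analogue of \eqref{aprw} for the cocycle setting and asserts that no such $(\theta,\eta,\lambda,v)$ with $|\lambda|=|v|=1$ exists. This contradiction proves the lemma. The main obstacle is the bookkeeping around the semidirect-product multiplication for $\psi_n$, but as noted the restriction $\theta\in T_d$ makes $e^{i\langle\theta,\cdot\rangle}$ a character of $\Z^d$ insensitive to sign twists, so iteration proceeds cleanly and both cases $\eta=\pm 1$ reduce to the same aperiodicity statement.
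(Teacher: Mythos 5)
Your proof takes a genuinely different route from the paper's, and the final step has a real gap. The paper does not use aperiodicity at all for this lemma: it shows directly that $\|L_{\rho_{\theta,\pm 1}}^N\|_{\cB_\beta}<1$ for some $N$, first contracting the $|\cdot|_\beta$-seminorm via the H\"older property of $\varphi$, and then, for functions of small seminorm (essentially locally constant sign), contracting the sup norm by exploiting cancellation from two branches $b,b'\in\alpha$ on which $e^{i\langle\theta,\psi^{\Z^d}\rangle}\varepsilon^{(1-\eta)/2}$ takes opposite signs (this is available precisely because $\theta\in T_d$ makes that expression $\pm 1$-valued). That argument is self-contained and uses no cohomological input.

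Your argument, by contrast, reduces the existence of a unit-modulus eigenvalue to the coboundary relation $e^{i\langle\theta,\psi^{\Z^d}(y)\rangle}\varepsilon(y)^{(1-\eta)/2}=\lambda\,v(Ty)/v(y)$ and then invokes \eqref{eq:apdinf} to get a contradiction. That invocation is unjustified. The paper's aperiodicity assumption (Ap) concerns only the \emph{modified} $\Z^d$-valued cocycle $\psi_+^{\Z^d}$, which agrees with $\psi^{\Z^d}$ on $\{\varepsilon=1\}$ but equals $\psi^{\Z^d}\circ S$ on $\{\varepsilon=-1\}$, and it contains no $\varepsilon$-factor at all. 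Thus (Ap) is not a statement about the one-dimensional characters $\rho_{\theta,\eta}$ of $G_d$ evaluated along $\psi$, and your coboundary equation is not of the form that (Ap) excludes. (In the paper, (Ap) is only used in Lemma~\ref{lemma:appsi} to control the operators $L_{\pm\theta}^{\pm 1}$ restricted to $\{\varepsilon=\pm 1\}$ away from $T_d$, via the $K_\theta^+,V^+$ symmetrization trick, which does not carry over to the unrestricted $L_{\rho_{\theta,\eta}}$.) Your derivation of the coboundary relation and the observation that $e^{i\langle\theta,\cdot\rangle}$ is sign-insensitive when $\theta\in T_d$ are both correct, but to close the argument you would need a strictly stronger aperiodicity hypothesis (aperiodicity of $\psi$ with respect to all nontrivial characters of $G_d$), which the paper does not assume. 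The paper's direct contraction estimate avoids this entirely.
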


\begin{proof} Recall that the operators $L_{\rho_{\theta,1}}$
 and $L_{\rho_{\theta,-1}}$ act on functions $v^{+}$ and $v^{-}$, respectively.

 To exclude eigenvalues on the unit circle, it suffices to show that the operator norms of
 $L_{\rho_{\theta,1}}^N$ and
 $L_{\rho_{\theta,-1}}^N$
 are strictly less than $1$ for some $N \in \N$.
 We will compute this operator norm for
 $L_{\rho_{\theta,-1}}^N$; for $L_{\rho_{\theta,1}}^N$
 the computation is the same.

 The norm on $\cB_\beta(X,\C)$ has two components. We start with the symbolic H\"older norm.
 Take $x,x' \in X$ and for $y \in T^{-N}(x)$,
 let $y'$ denote the point in $T^{-N}(x')$ in the same element $a \in \alpha_N$ as $y$.
 Then, because $\eps(y) = \eps(y')$ and $\psi^{\Z^d}(y) = \psi^{\Z^d}(y')$, we get
 \begin{eqnarray*}
  | L_{\rho_{\theta,-1}}^N v^{-}(x) -
   L_{\rho_{\theta,-1}}^N v^{-}(x') |
   &\leq&
   \left| \sum_{y \in T^{-N}(x)} e^{\varphi(y)}  e^{i\langle \theta,\psi^{\Z^d}(y)\rangle}\eps(y) v^-(y)
   - e^{\varphi(y')} e^{i\langle \theta,\psi^{\Z^d}(y')\rangle}\eps(y') v^{-}(y') \right| \\
   &\leq&
   \sum_{y \in T^{-N}(x)} \left| e^{\varphi(y)} v^{-}(y) - e^{\varphi(y')} v^{-}(y') \right| \\
    &\leq&
   \sum_{y \in T^{-N}(x)} \left| e^{\varphi(y)}
   - e^{\varphi(y')} \right| |v^{-}(y)| + e^{\varphi(y')} \left| v^{-}(y) - v^{-}(y') \right| \\
   &\leq& \sum_{y \in T^{-N}(x)} C e^{\varphi(y)}  d_\beta(y,y')
    \| v^{-} \|_{\infty} + e^{\varphi(y')} \beta^N D_\beta(v^{-}) \\
    &\leq& C \beta^N \| v^{-} \|_{\cB_\beta(X,\C)} \leq \frac12 \| v^- \|_{\cB_\beta(X,\C)}
 \end{eqnarray*}
 for $N$ sufficiently large. Here the constant $C$ comes from the H\"older condition on the potential
 $\varphi$. Taking the supremum over all $v^{-} \in \cB_\beta(X,\C)$, we see that $L_{\rho_{\theta,-1}}^N$
  strictly contracts the $| \ |_\beta$-seminorm.

This would already show that $L_{\rho_{\theta,-1}}$ strictly contracts the $\| \ \|_{\cB_\beta(X,\C)}$-norm, except that we need to consider $v^{-}$'s with H\"older seminorm $| v^{-} |_\beta$ very small. In particular, we need to consider $v^{-}$'s such that for each element $a \in \alpha$, either $v^{-}|_a \geq \frac12 \sup |v^{-}|$, or
$v^{-}|_a \leq -\frac12 \sup |v^{-}|$.
For such $v^{-}$, we need to look closer at the $L^\infty$-part of the norm. Here we can exploit the fact that there are always terms with opposite signs, that partially cancel.
Indeed, pick $b, b' \in \alpha$   such that $e^{i \langle\theta, \psi^{\Z^d}|_b \rangle}\eps|_b=1$,  while $e^{i \langle\theta, \psi^{\Z^d}|_{b'} \rangle}\eps|_{b'}=-1$. Here we recall that $\theta\in T_d\setminus\{0_d\}$, so $e^{i
\langle\theta, \psi^{\Z^d}\rangle}=\pm 1$.
Then, writing $y_{b'} \in T^{-N}(x) \cap b'$, we get
\begin{eqnarray*}
  | L_{\rho_{\theta,-1}}^N v^{-}(x) |
   &\leq&
   \left| \sum_{y \in T^{-N}(x)} e^{\varphi(y)}e^{i \langle\theta, \psi^{\Z^d} \rangle}\eps(y)  v^{-}(y)\right| \\
   &\leq&
   \sum_{y_{b'} \neq y \in T^{-N}(x)} e^{\varphi(y)} \sup|v^{-}|
    - 2e^{\varphi(y_{b'})} v^{-}(y_{b'})
    \\ &\leq&  \| v^{-} \|_{\infty} -
    2 e^{\varphi(y_{b'})} \frac12 \sup|v^{-}|
 \leq (1-e^{\varphi(y_{b'})} ) \| v^{-} \|_{\infty}.
 \end{eqnarray*}
Combining the two estimates, we see that $L_{\rho_{\theta,-1}}^N$
  strictly contracts the $\| \ \|_{\cB_\beta(X,\C)}$.
\end{proof}

Using Lemma~\ref{lemma:noev0}, we obtain

\begin{corollary}\label{cor:pi}

There exists $\eps_0\in (0,1)$ so that for all
$\theta\in T_d\setminus\{0_d\}$,
 $\left|\operatorname{Tr}\left(\rho_\theta(\pm 1, r)M_{n,\theta}^*(v,w)\right)\right|\le \eps_0^n$, $v,w\in \cB_\beta(X,\C^2)$.
\end{corollary}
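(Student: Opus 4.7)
The idea is to exploit the block-diagonal decomposition of $\rho_\theta$ for $\theta\in T_d$ given by~\eqref{eq:opi} together with the change-of-basis matrix $U$ from~\eqref{eq: change-basis-rep}, and then convert the no-peripheral-eigenvalue statement of Lemma~\ref{lemma:noev0} into strict exponential decay via quasi-compactness.

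First I would use conjugation invariance of the trace to write
\[
 \operatorname{Tr}\bigl(\rho_\theta(\pm 1,r)\,L_{\rho_\theta}^n v\bigr)
 =\operatorname{Tr}\bigl(U\rho_\theta(\pm 1,r)U^{-1}\cdot U L_{\rho_\theta}^n U^{-1} v\bigr).
\]
For $\theta\in T_d$, the first factor becomes the diagonal matrix with entries $\rho_{\theta,1}(\pm 1,r)$ and $\rho_{\theta,-1}(\pm 1,r)$, both of modulus $1$ since they are characters. The second factor becomes block diagonal: a direct calculation (the conjugation by $U$ passes through the sum in the definition of $L_{\rho_\theta}$) gives $U L_{\rho_\theta}^n U^{-1} = L_{\rho_{\theta,1}}^n\oplus L_{\rho_{\theta,-1}}^n$, acting on the components $v_1=v_2=v/\sqrt{2}$ of $v$ in the basis $\{v_+,v_-\}$. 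The trace therefore reduces to
\[
 \rho_{\theta,1}(\pm 1,r)\,L_{\rho_{\theta,1}}^n v_1+\rho_{\theta,-1}(\pm 1,r)\,L_{\rho_{\theta,-1}}^n v_2.
\]

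Next I would invoke Lemma~\ref{lemma:noev0}, which rules out eigenfunctions of $L_{\rho_{\theta,\pm 1}}$ on the unit circle in $\cB_\beta$. Combined with the Lasota--Yorke inequality for these operators (recorded at the start of subsection~\ref{subsec:thetazero}, which yields quasi-compactness on $\cB_\beta$ with essential spectral radius strictly less than $1$), the peripheral spectrum on $\{|z|=1\}$ consists at most of finitely many eigenvalues of finite multiplicity, and Lemma~\ref{lemma:noev0} excludes these. Hence for each $\theta\in T_d\setminus\{0_d\}$ there exist $C_\theta>0$ and $\eps_\theta\in(0,1)$ with $\|L_{\rho_{\theta,\pm 1}}^n\|_{\cB_\beta}\le C_\theta\eps_\theta^n$. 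Since $T_d\setminus\{0_d\}$ is finite (cardinality $2^d-1$), setting $\eps_0=\max_\theta\eps_\theta^{1/2}<1$ absorbs the prefactors $C_\theta$ for all $n$ large enough (and the finitely many small $n$ are handled by a slight enlargement of $\eps_0$), giving the claimed uniform bound.

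The main subtlety is the passage from \emph{no peripheral eigenvalues} to \emph{strict exponential decay of $\|L_{\rho_{\theta,\pm 1}}^n\|_{\cB_\beta}$}. This requires that quasi-compactness be invoked carefully so that the essential spectral radius is already $<1$, reducing the peripheral spectrum to eigenvalues, which Lemma~\ref{lemma:noev0} then eliminates; only after this step can one upgrade the spectral radius bound to a norm bound. Once this step is in place, the rest of the argument is straightforward algebra with the block-diagonal form and the fact that $|T_d\setminus\{0_d\}|$ is finite.
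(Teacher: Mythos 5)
Your proposal is correct and follows essentially the same route as the paper: conjugate by $U$ to block-diagonalize, reduce the trace to the scalar operators $L_{\rho_{\theta,\pm1}}^n$ acting on the components of $v$, and then invoke Lemma~\ref{lemma:noev0} together with the finiteness of $T_d\setminus\{0_d\}$. The only difference is a technical one in how Lemma~\ref{lemma:noev0} is used: you pass through quasi-compactness (essential spectral radius $<1$ via Lasota--Yorke, so peripheral spectrum consists of eigenvalues, which the lemma rules out) to get $\|L_{\rho_{\theta,\pm1}}^n\|_{\cB_\beta}\le C_\theta\eps_\theta^n$, whereas the paper's proof of Lemma~\ref{lemma:noev0} in fact establishes directly that $\|L_{\rho_{\theta,\pm1}}^N\|_{\cB_\beta}<1$ for some $N$, so the contraction is immediate without the spectral-radius detour; both are valid, and your version is the careful one if one only reads the statement of the lemma rather than its proof.
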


\begin{proof}
Recall that $L_{\rho_\theta}v = L_{\rho_{\theta,1}}v^{+}(x)\oplus  L_{\rho_{\theta,-1}}v^{-}(x)$
as represented in the diagonal-antidiagonal basis.

Procceding as in subsection~\ref{subsec:thetazero} we need to bring this back to the Euclidean basis as to multiply with $\binom{e^{i\langle\theta, r\rangle} \quad 0}{0 \quad e^{-i\langle\theta, r\rangle}}$ and take the trace. 

The next step is to multiply this matrix with
$\binom{e^{i\langle\theta, r\rangle} \quad 0}{0 \quad e^{-i\langle\theta, r\rangle}}$.
But this matrix  is represented w.r.t.\ the Euclidean basis. To multiply the two matrices, we first need to put
$(L_{\rho_\pi}^nv)_{da}$ in Euclidean representation by conjugating via the matrix $U = U^{-1} = \frac{1}{\sqrt{2}} \binom{1 \ 1}{1\ -1}$ (as introduced in~\eqref{eq: change-basis-rep}).

Recalling~\eqref{changebmult1111111111},

\begin{align}\label{changebmult} (L_{\rho_\theta})_{euc}\begin{pmatrix}
                   v_1  \\
                0
                  \end{pmatrix}
 = \frac12 \begin{pmatrix}
                 (L_{\rho_{\theta,1}}^n + L_{\rho_{\theta,-1}}^n)v_1\\(L_{\rho_{\theta,1}}^n - L_{\rho_{\theta,-1}}^n)v_1
                  \end{pmatrix}_{euc},&&(L_{\rho_\theta})_{euc}\begin{pmatrix}
                   0  \\
                v_2
                  \end{pmatrix}
 = \frac12 \begin{pmatrix}
                 (L_{\rho_{\theta,1}}^n - L_{\rho_{\theta,-1}}^n)v_2\\(L_{\rho_{\theta,1}}^n + L_{\rho_{\theta,-1}}^n)v_2
                  \end{pmatrix}_{euc}.
\end{align}

Thus, the trace of the matrix $\rho_\theta(\pm1,r)M_{n,\theta}^*(v,w)$ is 
\begin{align*}
\tr(\rho_\theta(\pm1,r)M_{n,\theta}^*(v,w))=    \cos(\langle\theta, r\rangle)\left[\int [L_{\rho_{\theta,1}}^n + L_{\rho_{\theta,-1}}^n]v_1\cdot w_1\;d\mu +\int [L_{\rho_{\theta,1}}^n + L_{\rho_{\theta,-1}}^n]v_2\cdot w_2\;d\mu\right]
\end{align*}
By Lemma~\ref{lemma:noev0},
the operators $L_{\rho_{\theta,1}}$ and $L_{\rho_{\theta,-1}}$
are contractions, and the conclusion follows.
\end{proof}

\subsection{Understanding $L_{\rho_\theta}$ for $\theta$
outside $B_\delta(0_d)=[-\delta,\delta]^d$. Aperiodicity assumption.}

The following aperiodicity condition is needed to control the behaviour of $L_{\rho_\theta}$ for $\theta$ outside of $B_\delta(0_d)$  and reads as follows:

\begin{itemize}
 \item[(Ap)] For every $\theta\in [0,2\pi)^d\setminus T_d$, the cohomology-like equation
 \begin{align}\label{eq:cohomology-like}
     R \circ T(y) = \begin{cases}
                e^{2i \langle \theta, \psi^{\Z^d}(y) \rangle} R(y) & \text{ if } \eps(y) = 1,\\
                 e^{2i \langle \theta, \psi^{\Z^d}(y) \rangle} R(y)^{-1} & \text{ if } \eps(y) = -1,\\
                \end{cases}
 \end{align}
 has no measurable solution $R:X\to \mathbb{S}^1$.
\end{itemize}
\begin{lemma}\label{lemma:appsi}
Let $\delta>0$ be given in \eqref{finalspdec}.
Let $\delta>0$ be as in \eqref{finalspdec}.
If {\rm (Ap)} holds, then there exists $\delta_1>0$ such that $\|L_{\rho_\theta}^n\|_{\cB_\beta(X,\C^2)}< C\delta_1^n$ for every $\theta\in ([0,2\pi)^d\setminus T_d)\setminus B_\delta(0_d)$.
\end{lemma}


\begin{proof}
Assume that there exists $\theta\in \left([0,2\pi)^d\setminus T_d\right)\setminus B_\delta(0_d)$ so that $L_{\rho_\theta} v=\lambda v$ for $|\lambda|=1$ and $v=(v_1,v_2)^T\in\cB_\beta(X,\C^2), v\ne 0$.
Taking the norm of the eigen-value equation, and using that $\rho_\theta$ is a unitary matrix, so $\rho_\theta$ is an isometry,
we get  (for Euclidean norm $|v(x)|$ on $\C^2$):
\begin{eqnarray*}
 |v(x)| = |\lambda v(x)| = \left| (L_{\rho_\theta} v)(x) \right|
 &\leq& \sum_{y \in T^{-1}(x)} e^{\varphi(y)} | \rho_\theta \cdot v(y)| \\
 &=& \sum_{y \in T^{-1}(x)} e^{\varphi(y)} |v(y)| = L(|v|),
\end{eqnarray*}
for the untwisted operator $L$.
Integration gives $\int_X L(|v|) \, d\mu = \int_X |v| \, d\mu$, so
$\int_X L(|v|) - |v| \, d\mu = 0$.
But the integrand is non-negative $\mu$-a.e., so $L(|v|) = |v|$ $\mu$-a.e.
We already know that $L$ satisfies the spectral gap on $(X,\cB_\beta)$ and that $1$ is a fixed point for  $L$ with associated eigenvalue $1$. The eigenspace associated with the eigenvalue $1$ is generated by the constant function $1$. That is, every fixed point of $L$ is a constant, $\mu$-a.e. Given $|v|$ is such a fixed point,  $|v|$ is constant, $\mu$-a.e. From now on, we scale $v$ such that $|v| = 1$ $\mu$-a.e.

Now that we know that $v(y) \in \S^1$, the eigenfunction equation
$v(x) = \sum_{y \in T^{-1}(x)} e^{\varphi(y)} \rho_\theta(\psi(y)) v(y)$
can only be satisfied if $\lambda v(x) = \rho_\theta(\psi(y)) v(y)$ for all $y \in T^{-1}(x)$ and $\mu$-a.e.\ $x$. Indeed, because $\sum_{y \in T^{-1}(x)} e^{\varphi(y)} = 1$,
only when all summands $\rho_\theta(\psi(y)) v(y)$ have the same direction,
the sum can equal $\lambda v(x)$.

Using the explicit form of $\rho_\theta$, and separating the two components of $v = (v_1,v_2)$, we get
\begin{equation}\label{eq:4equations}
\begin{cases}
     \begin{cases}
        \lambda v_1 \circ T(y) = e^{i \langle \theta, \psi^{\Z^d}(y) \rangle} v_1(y)\\
        \lambda v_2 \circ T(y) = e^{-i \langle \theta, \psi^{\Z^d}(y) \rangle} v_2(y)
      \end{cases}
        & \text{ if } \eps(y) = 1,\\[8mm]
      \begin{cases}
        \lambda v_2 \circ T(y) = e^{-i \langle \theta, \psi^{\Z^d}(y) \rangle} v_1(y)\\
        \lambda v_1 \circ T(y) = e^{i \langle \theta, \psi^{\Z^d}(y) \rangle} v_2(y)
      \end{cases}   & \text{ if } \eps(y) = -1.\\
                \end{cases}
\end{equation}
If the set $A = \{ y \in X : v_2(y) = 0\}$ has positive measure,
it follows that $\bigcup_{n \geq 0} T^n(A)$ has full measure by exactness of $T$. Exactness is a consequence of mixing.
From \eqref{eq:4equations} it then follows that $v_1(y) = v_2(y) = 0$
for every $y \in \bigcup_{n \geq 0} T^n(A)$, $\mu$-a.e.
This contradicts that $|v(y)| = 1$ $\mu$-a.e.
Hence, we can define $R(y) = v_1(y)/v_2(y)$ for $\mu$-a.e.\ $y \in X$,
and dividing the two pairs of equations in \eqref{eq:4equations},
we see that $R$ must satisfy the equation \eqref{eq:cohomology-like}, against our assumption.
This proves the lemma.
\end{proof}

\subsection{Final expression for $L_{\rho_\theta}^n$ for 
$\theta\in [0,2\pi)^d\setminus T_d$}

Assume that {\rm (Ap)}, that is~\eqref{eq:cohomology-like} holds.
Then,
by~\eqref{useininv} together with Corollary~\ref{cor:pi}, we have 
\begin{align}\label{finalfinal}
 (L_{\rho_\theta}^n)_{euc} v=
   e^{-\frac{n}{2}\theta^T\Sigma_1\theta (1+o(1))} (\Pi_{\rho_0})_{euc}v (1+o(1)) +O(\delta_0^n)
\end{align}
for every $\theta\in B_\delta(0_d)$, with $(\Pi_{\rho_0})_{euc}v$ given in~\eqref{projectionzeroeuc}, and $\|L_{\rho_\theta}^n\|<C\delta_1^n$ for every $\theta\in ([0,2\pi)^d\setminus T_d)\setminus B_\delta(0_d)$, where $\delta_1>0$ is given in Lemma~\ref{lemma:appsi}.

\section{Completing the proof of the main results}
\label{sec:compdih}
\subsection{Completing the Proof of Theorem~\ref{prop:LCLTGM}}

Recall from~\eqref{eq:invdihGM} that
\begin{align*}
 \mu(\{x\in X: \psi_n(x)=(\pm 1,r)\})=\frac{1}{(2\pi)^d}\int_{[0,2\pi)^d\setminus T_d}
 \operatorname{Tr}\left(\rho_\theta(\pm 1,r)\, M_{n,\theta}^*(1, 1)^T\, d\mu\right)\, d\theta,
\end{align*}
where
\begin{align*}
      M_{n,\theta}^*(1,1)^T=\left[\int_X L_{\rho_\theta}^n (1e_1)\, d\mu\quad\quad  \int_X L_{\rho_\theta}^n (1e_2)\, d\mu\right]
     \end{align*}
Since the column vectors are in Euclidean basis, we use~\eqref{finalfinal}.
So we can write 

\begin{align*}
      M_{n,\theta}^*(1,1)^T=\left[ e^{-\frac{n}{2}\theta^T\Sigma_1\theta (1+o(1)} (\Pi_{\rho_0})_{euc}(1e_1)+O(\delta_0^n) \, d\mu\quad\quad  e^{-\frac{n}{2}\theta^T\Sigma_1\theta (1+o(1)} (\Pi_{\rho_0})_{euc}(1e_2)+O(\delta_0^n)\, d\mu\right]
     \end{align*}
     
But recalling~\eqref{projectionzeroeuc1}, 
$
 (\Pi_{\rho_0})_{euc}(1 e_1)
                  =\frac12\begin{pmatrix}
                   1  \\
                  1
                  \end{pmatrix}
                  $
 and $
 (\Pi_{\rho_0})_{euc}(1e_2)=\frac12\begin{pmatrix}
                   1  \\
                  1
                  \end{pmatrix}
                  $.

Thus,
\begin{align*}
 \mu(&\{x\in X: \psi_n(x)=(\pm 1,r)\})\\
 &=\frac{1}{(2\pi)^d}\int_{[0,2\pi)^d\setminus T_d}
 e^{-\frac{n}{2}\theta^T\Sigma_1\theta (1+o(1)}\operatorname{Tr}\left(\rho_\theta(\pm 1,r)\frac12
 \begin{pmatrix}
 1 &  1 \\
  1 & 1
\end{pmatrix}\right)\, d\theta+O(\delta^n).
\end{align*}
So, $\mu(\{x\in X: \psi_n(x)=(\pm 1,r)\})=\frac{1}{(2\pi)^d}\int_{[0,2\pi)^d\setminus T_d}e^{-\frac{n}{2}\theta^T\Sigma_1\theta (1+o(1))}\cos\left(\langle\theta, r\rangle\right) d\theta+O(\delta^n)$.
By the change of variables $\theta\to\sigma/\sqrt{n}$,
$\left|n^{d/2}\mu(\{x\in X: \psi_n(x)=(\pm 1,r)\})- \Phi_1\left(\pm \frac{r}{\sqrt{n}}\right)\right|\to 0$, where
$\Phi_1$ is the density of Gaussian with mean $0$ and variance $\Sigma_1^2$.

The last statement in Proposition~\ref{prop:LCLTGM} (a.k.a. a mixing LCLT)
follows the same way starting from~\eqref{eq:invdihGMfun},
but working with 
\begin{align*}
      M_{n,\theta}^*(v, w)=M_{n,\theta}^*(v_1, w_1)+ M_{n,\theta}^*(v_2, w_2)
     \end{align*}
     with $M_{n,\theta}^*(v_1, w_1)$ and $M_{n,\theta}^*(v_2, w_2)$
     given in~\eqref{eq:mnstar},
instead of $M_{n,\theta}^*(1,1)^T$. In particular, in this case we use~\eqref{projectionzeroeuc} instead of~\eqref{projectionzeroeuc1}.

\begin{remark}\label{rmk:cor2,1}
For use in the proof of Theorem~\ref{cor:2} below we record the following.

Let $e=(1,0_{\Z^d})$ be the neutral element of $G_d$ and note that
$\rho_\theta(e)=I$. Starting from~\eqref{eq:invdihGM2},
 
 \begin{align*}
 1_{\{x\in X: \psi_n(x)=e\}}=\frac{1}{(2\pi)^d}\int_{\theta\in [0,2\pi)^d\setminus T_d}
 \operatorname{Tr}\left(
 M_{n,\theta}(1,1)^T(x)\right)\,d\theta,
\end{align*}
where
\begin{align*}
      M_{n,\theta} (1,1)^T(x)=\left[L_{\rho_\theta}^n (1e_1)(x)\quad\quad  L_{\rho_\theta}^n (1e_2)(x)\right].
     \end{align*}

 Proceeding as in the previous proof we get
 \begin{align}\label{eq:usecor11111}
  1_{\{x\in X: \psi_n(x)=e\}}&=c_d n^{-d/2}(1+o(1))+ Q^n(1,1)^T,
 \end{align}
 where $Q^n(1,1)^T= Q_{\rho_\theta}^n (1e_1)+Q_{\rho_\theta}^n (1e_2) $  and $c_d$ is a Gaussian density with mean $0$ and variance $\Sigma_1^2$ evaluated at $0$. In particular, $\|Q^n\|_{\cB(X,\C^2)}= O(\delta^n)$.
\end{remark}

\subsection{Proof of Theorem~\ref{cor:2}}
\label{subsec:prcor2}

A renewal equation was exploited in~\cite{Tho16} for studying
the big tail of the first return time, called $\varphi$  there, to the origin of a $\Z^d$ extension
of a Gibbs Markov semiflow (a suspension semiflow over a mixing GM map with $L^1$ roof function). 
For a refinement of this procedure, in order to obtain the small tail of $\varphi$ without second moment assumption, which further leads to (Krickeberg) mixing (though resuming to $\Z$ extensions instead of $\Z^2$) we refer to~\cite{T22}.
More precisely, given that  $\mu_0$ is the induced  measure
(on $X\times\{0\}\times\{0\}$),~\cite{Tho16} provides the asymptotic of 
$\mu(\varphi\ge t)$, as $t\to\infty$, while~\cite{T22} gives the asymptotic
of $\mu(t\le \varphi\le t+1)$, as $t\to\infty$. 

For the proof of Theorem~\ref{cor:2}, we need  a discrete version of~\cite[Lemma 1.8]{Tho16}, the so called renewal equation for $\Z^d$ extensions of Gibbs Markov flow. This renewal equation can be traced back to~\cite{PoSh}. Since the roof function (called $r$ there ) plays no role for $G_d$ extensions of GM maps we provide a proof, though the gist of the proof is the same. The same applies to the arguments displayed after Lemma~\ref{lemm:ren}; they are a discrete version of the arguments used in~\cite[Proof of Proposition 1.2]{Tho16}.

We recall the notation in the statement of Theorem~\ref{cor:2}.
Recall that $(X,T,\alpha,\mu)$ is a mixing GM map, $Y=X\times\{e\}$,
$\tau$ the first return time of $T_\psi$ to $Y$, that is $\tau(x)=\min\{n\ge 1:\psi_n(x)=e\}$,
and that $\tilde\nu=\mu\otimes\delta_e$. Note that $\{\tau=0\}=\emptyset$, and, by convention, we write 
$1_{\{\tau=0\}}=0$. Also, we say that $\psi_0\equiv e$.

Let $T_\psi^Y=T_\psi^\tau$ be the first return map to $Y$. Recall that
$L_{\rho_0}$ is the transfer operator of $(X,T,\alpha,\mu)$ acting on the Banach space $\cB(X,\C^2)$. The operator renewal identity we are after reads as

\begin{lemma}\label{lemm:ren}
 Let $\tilde L$ be the transfer operator the first return map $T_\psi^Y$. Let $v:X\to\C^2$, $v\in L^1(\mu)$.
 Define $\tilde L_z v= \tilde L(z^\tau v)=\sum_{n\ge 0} z^n L^n1_{\{\tau=n\}}$.
 Then for all $z\in\bar\D$,
 \begin{align}\label{eq:reneq}
  \sum_{m\ge 0}\tilde L_z^m v=\sum_{n\ge 0} z^n L^n1_{\{\psi_n=e\}} v.
 \end{align}
\end{lemma}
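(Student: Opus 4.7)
The plan is to prove both sides of~\eqref{eq:reneq} equal the common expression obtained by grouping terms according to successive return times of the orbit to $Y=X\times\{e\}$. Set $\tau_0\equiv 0$ and, for $m\ge 1$,
\[
\tau_m = \tau + \tau_{m-1}\circ T^\tau,
\]
so that $(\tau_m)_{m\ge 0}$ is the strictly increasing list of visits of $(T_\psi^j(x,e))_{j\ge 0}$ to $Y$. Since $\psi_n(x)=e$ if and only if the orbit starting at $(x,e)$ is at $Y$ at time $n$, we have the disjoint decomposition $\{\psi_n=e\}=\bigsqcup_{m\ge 0}\{\tau_m=n\}$ (with $\{\tau_0=n\}=\{n=0\}$). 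Plugging this into the right-hand side of~\eqref{eq:reneq} and using $\psi_0\equiv e$ yields
\[
\sum_{n\ge 0} z^n L^n(1_{\{\psi_n=e\}}v) \;=\; v \;+\; \sum_{m\ge 1}\sum_{n\ge 1} z^n L^n(1_{\{\tau_m=n\}}v).
\]

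It therefore suffices to prove, by induction on $m\ge 1$, the operator identity
\[
\tilde L_z^m v \;=\; \sum_{n\ge 1} z^n L^n(1_{\{\tau_m=n\}}v),
\]
since then summing over $m\ge 0$ (with the convention $\tilde L_z^0=\mathrm{Id}$) matches the two sides. The base $m=1$ is the definition of $\tilde L_z$. For the inductive step I would use the cocycle splitting
\[
\{\tau_{m+1}=N\} \;=\; \bigsqcup_{k=1}^{N-1}\{\tau=k\}\cap T^{-k}\{\tau_m=N-k\},
\]
together with the standard pullback identity $L^k(f\cdot g\circ T^k)=g\cdot L^k f$, which combine to give
\[
L^N(1_{\{\tau_{m+1}=N\}}v) \;=\; \sum_{k=1}^{N-1} L^{N-k}\bigl(1_{\{\tau_m=N-k\}}\cdot L^k(1_{\{\tau=k\}}v)\bigr).
\]
Multiplying by $z^N$, summing over $N$, and re-indexing via $N=k+(N-k)$ factors the double sum into $\tilde L_z\bigl(\sum_{\ell\ge 1}z^\ell L^\ell(1_{\{\tau_m=\ell\}}v)\bigr)$, which by the induction hypothesis equals $\tilde L_z(\tilde L_z^m v)=\tilde L_z^{m+1}v$.

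The only technical obstacle is justifying the rearrangement of the double series. For $|z|<1$ one can apply the identity to $|v|$ (or decompose $v$ into positive and negative parts) so that every term is non-negative and Fubini applies directly, the sums being dominated by the finite quantity $\sum_n |z|^n L^n \mathbf 1 = (1-|z|)^{-1}$ in $L^1(\mu)$. The extension to $|z|=1$ then follows either by continuity (on any function space where both series converge, for instance as formal power series in $z$ with coefficients in $L^1(\mu)$, whose equality is exactly the combinatorial identification $\{\psi_n=e\}=\bigsqcup_m\{\tau_m=n\}$ performed coefficient by coefficient). I do not anticipate a substantive difficulty beyond this bookkeeping; the content of the lemma is really just the disjoint-union decomposition of $\{\psi_n=e\}$ into successive $\{\tau_m=n\}$, combined with the transparent manner in which iterates of $L$ compose with $T^k$.
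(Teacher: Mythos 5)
Your proof is correct and follows the same strategy as the paper's: decompose $\{\psi_n=e\}$ into the disjoint events $\{\tau_m=n\}$ indexed by successive returns, and identify the $m$-th term with $\tilde L_z^m v$. The only difference is that where the paper compresses the induction into the phrase ``applying this equation iteratively (and using induction on $m$)'', you spell it out via the cocycle splitting $\{\tau_{m+1}=N\}=\bigsqcup_{k=1}^{N-1}\{\tau=k\}\cap T^{-k}\{\tau_m=N-k\}$ and the pullback identity $L^{k}(f\cdot g\circ T^{k})=g\cdot L^{k}f$, which is exactly the content of that step.
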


\begin{proof}
 We first look at the RHS of~\eqref{eq:reneq}. Decompose $\{\psi_n=e\}$
 according to the number of first-return blocks. That is, define
 \[
  \tau_1:=\tau,\quad \tau_{j+1}(x)= \tau_{j}(x)+\tau(T^{\tau_j(x)}(x))
 \]
 and note that $\tau_m$ is the $m$-th return of the cocycle $\psi_n$ to $e$.
 For each fixed $n\ge 1$, write
 \[
  \{\psi_n=e\}=\cup_{m\ge 1}\{\tau_m=n\}.
 \]
This is because the 
$m$-th return to $e$ happens exactly at time 
$n$ for a unique $m$. In other words, the sequence of the successive return times is strictly increasing ($\tau_{j+1}(x)> \tau_{j}(x)$), so a given time $n$
 can be the 
$m$-th return for a single $m$.

Thus, for any $v,w:X\to\C^2$, $v\in L^1(\mu)$ and $w\in L^\infty$,
\begin{align}\label{eq:it1}
 \sum_{n\ge 0} z^n \int_X L^n1_{\{\psi_n=e\}} v w\, d\mu 
 =\sum_{n\ge 0} z^n \sum_{m\ge 1}\int_X L^n1_{\{\tau_m=n\}} vw\circ T^m\, d\mu.
\end{align}

On the other hand, by definition,
$
\int_X \tilde{L}_z v\, w\, d\mu= \sum_{n\ge 0} z^n \int_X L^n1_{\{\tau=n\}} v w\, d\mu$.
Applying this equation iteratively (and using induction on $m$), we get 

\begin{align}\label{eq:it2}
\int_X \tilde{L}_z^m v\, w\, d\mu=\int_X \tilde{L}(z^{\tau_m} v)\, w\, d\mu =\sum_{n\ge 0} z^n \int_X L^n1_{\{\tau_m=n\}} v w\, d\mu. 
\end{align}
Summing over $m\ge 1$ in~\eqref{eq:it2} and adding the term with $m=0$, we get 
\begin{align}\label{eq:it3}
\sum_{m\ge 0}\int_X \tilde{L}_z^m v\, w\, d\mu=\int_X v w\, d\mu+\sum_{m\ge 1}\int_X \tilde{L}_z^m v\, w\, d\mu =\sum_{n\ge 0} z^n \sum_{m\ge 0}\int_X L^n1_{\{\tau_m=n\}} v w\, d\mu.
\end{align}
For $z\in\D$, the conclusion follows from~\eqref{eq:it3} and~\eqref{eq:it1}. 
\end{proof}

Take $v_{\rho_0}=(1,1)^T$ (the invariant density written on $\cB(X,\C^2)$) in~\eqref{eq:reneq}. Recall from subsection~\ref{sec:bd} that 
$\ell_0(v_{\rho_0})=\int_X \langle\xi_0, v\rangle d\mu=1$
, where $\xi_0=\frac{1}{2} (1,1)^T$.

Recall $1_{\{\psi_0=e\}}=1$.
 Recalling equation~\eqref{eq:usecor11111} in Remark~\ref{rmk:cor2,1}, we write
\begin{align}\label{eq:useren3}
\sum_{m\ge 0}\tilde L_z^m(1,1)^T &= (1,1)^T + c_d \sum_{n\ge 1} z^n n^{-d/2}(1+o(1))+
\sum_{n\ge 1} z^n Q^n (1,1)^T\\
 \nonumber &=:G(z)(1,1)^T (1+o(1)),
 z\in\bar\D\setminus\{1\},
\end{align}
where $G(z)= 1 + c_d \sum_{n\ge 1} z^n n^{-d/2}(1+o(1))$. 
We recall from Remark~\ref{rmk:cor2,1} that 
$\|Q^n\|_{\cB(X,\C^2)}= O(\delta^n)$, which justifies the definition of $G(z)$.

Multiplying with $I-\tilde L_z$ on both sides of~\eqref{eq:useren3},
we obtain
\begin{align*}
 (I-\tilde L_z)\sum_{m\ge 0}\tilde L_z^m (1,1)^T=G(z) (1+o(1)) (I-\tilde L_z) (1,1)^T.
\end{align*}
Since $\sum_{m\ge 0}\tilde L_z^m (1,1)^T=(I-\tilde L_z)^{-1}(1,1)^T$ for $z\ne 1$ (due to the fact $G(z)$ is well defined for all $z\ne 1$),
\(
 (1,1)^T=G(z) (1+o(1))(I-\tilde L_z) (1,1)^T.
\)
Integrating over $Y=X\times\{e\}$ w.r.t. $\tilde\nu=\ell_0\otimes (v_{\rho_0}\delta_e)$,

\begin{align*}
 1 &=G(z) \int_Y (I-\tilde L_z) (1,1)^T\, d\tilde\nu(1+o(1))
 =G(z)(1+o(1))\sum_{n\ge 1} (1-z^n) \tilde\nu(\tau=n).
\end{align*}
But,
\begin{align*}
 \sum_{n\ge 1} (1-z^n) \tilde\nu(\tau=n)&=\sum_{n\ge 1} (1-z^n) \tilde\nu(\tau\ge n)-\sum_{n\ge 1} (1-z^n) \tilde\nu(\tau\ge n-1)\\
 &=\sum_{n\ge 1} (1-z^n) \tilde\nu(\tau\ge n)-\sum_{n\ge 2} (1-z^{n-1}) \tilde\nu(\tau\ge n)=(1-z)\sum_{n\ge 1} z^{n-1} \tilde\nu(\tau\ge n)\\
 &=\frac{1-z}{z}\sum_{n\ge 1} z^{n} \tilde\nu(\tau\ge n)=(1-z)\sum_{n\ge 0}z^n\tilde\nu(\tau\ge n+1).
\end{align*}

Thus,
\begin{align}\label{eq:f}
 \frac{1+o(1)}{(1-z)G(z)}=\frac{1}{z}\sum_{n\ge 1} z^{n} \tilde\nu(\tau\ge n)=\sum_{n\ge 0} z^n\tilde\nu(\tau\ge n+1).
\end{align}
Finally note that $\tilde\nu(\tau=n)=\ell_0\otimes(v_{\rho_0}\delta_e)(\tau=n)=\mu\otimes\delta_e(\tau=n)$.

With~\eqref{eq:f} we can complete\\

\begin{proof}{~of Theorem~\ref{cor:2}}.
Recall $G(z)= 1 + c_d \sum_{n\ge 1} z^n n^{-d/2}(1+o(1))$.

\begin{itemize}
 \item If $d=1$, $G(z)=c_1(1-z)^{-1/2}(1+o(1))$, for some $c_1>0$, as $z\to 1$.
 So, $\frac{1+o(1)}{(1-z)G(z)}=c_1^{-1}(1-z)^{-1/2}(1+o(1))$.
 A computation of the $n$-th Taylor coefficient of $f(z)=\sum_n f_n z^n=c_1^{-1}(1-z)^{-1/2}$ on the circle $\{e^{-u}e^{it}:t\in[-\pi,\pi)\}$
 with $e^{-u}=e^{-1/n}, n\ge 1$ (see, for instance,~\cite[Proof of Theorem 1.1]{Terhesiu16} with $\beta=1/2$ there) gives that $f_n=C_1 n^{-1/2}(1+o(1))$.
 Using the second equality in~\eqref{eq:f}, 
 $\tilde\nu(\tau\ge n+1)\sim f_n=C_1 n^{-1/2}(1+o(1))$.
 
  \item If $d=2$, $G(z)=1 + c_2 \sum_{n\ge 1} z^n n^{-1}(1+o(1))
  =-c_2\log\left(1-z\right)(1+o(1))$ as $z\to 1$.
  Using the first equality in~\eqref{eq:f} , $\sum_{n\ge 1} z^n\tilde\mu(\tau\ge n)=\frac{z(1+o(1))}{(1-z)G(z)}=-c_2^{-1}\frac{z}{(1-z)\log\left(1-z\right)}(1+o(1))$. The presence of $z$ makes the pole at $0$ disappear.
  An argument based on the transfer theorem in~\cite{FO} shows that
  the $n$-th Taylor coefficient of $\frac{z}{\log\left(1-z\right)}$ is 
  $\frac{1}{n(\log n)^2}(1+o(1))$. Since all the Taylor coefficients of $\frac{1}{1-z}$ are $1$, we obtain by the convolution of the Taylor coefficients $\frac{1}{n(\log n)^2}(1+o(1))$ and $1$ that
  $\tilde\mu(\tau\ge n)=\frac{C_2 (1+o(1))}{\log n}$.\\

 \item If $d\ge 3$, $G(1)=a<\infty$. As a consequence, $\lim_{n\to\infty}\tilde\nu(\tau\ge n)$ is a constant and $\sum_n \tilde\nu(\tau=n)<1$.

\end{itemize}

\end{proof}

\end{document}